\renewcommand\eqref[1]{(\ref{#1})} 
\newtheorem{theorem}{Theorem}[section]
\newtheorem{lemma}[theorem]{Lemma}
\newtheorem{proposition}[theorem]{Proposition}
\newtheorem{definition}[theorem]{Definition}
\theoremstyle{definition}
\newtheorem{remark}[theorem]{Remark}
\newcommand{\wt}[1]{\widetilde{#1}}
\newcommand{\Cinf}{\ensuremath{\mathcal{C}^\infty}}
\newcommand{\Cinfc}{\ensuremath{\mathcal{C}^\infty_{\text{c}}}}
\newcommand{\D}{\ensuremath{{\mathcal D}}}
\renewcommand{\S}{\mathscr{S}}
\newcommand{\E}{\ensuremath{{\mathcal E}}}
\newcommand{\mF}{\mathcal{F}}
\newcommand{\mb}[1]{\ensuremath{\mathbb{#1}}}
\newcommand{\N}{\mb{N}}
\newcommand{\R}{\mb{R}}
\newcommand{\C}{\mb{C}}
\newcommand{\G}{\ensuremath{{\mathcal G}}}
\newcommand{\Neg}{\mathcal{N}}
\newcommand{\lara}[1]{\langle #1 \rangle}
\newcommand{\supp}{\mathrm{supp}}
\newfont{\bigmath}{cmr12 at 13pt}
\newfont{\grecomath}{cmmi12 at 15pt}
\newcommand{\esp}{\mathrm{e}}
\newfont{\bl}{msbm10 scaled \magstep2}
\newcommand{\beq}{\begin{equation}}
\newcommand{\eeq}{\end{equation}}
\newcommand{\notmid}{\mid\kern-0.5em\not\kern0.5em}
\newcommand{\inner}[3][\empty]{\ifx#1\empty\left( #2|#3\right)\else#1( #2|#3 #1)\fi}
\newcommand{\eps}{\varepsilon}
\renewcommand{\Re}{\ensuremath{\mathrm{Re}}}
\renewcommand{\Im}{\ensuremath{\mathrm{Im}}}
\newcommand{\espo}{\mathrm{e}}
\newcommand\Rn{{\mathbb R}^n}
\renewcommand\N{{\mathbb N}_0}
\title[Hyperbolic equations with non-regular coefficients]{
Hyperbolic  second order equations with non-regular time dependent coefficients}
\author[Claudia Garetto]{Claudia Garetto}
\address{
  Claudia Garetto:
  \endgraf
  Department of Mathematical Sciences
  \endgraf
  Loughborough University
  \endgraf
  Loughborough, Leicestershire, LE11 3TU
  \endgraf
  United Kingdom
  \endgraf
  {\it E-mail address} {\rm c.garetto@lboro.ac.uk}
  }
\author[Michael Ruzhansky]{Michael Ruzhansky}
\address{
  Michael Ruzhansky:
  \endgraf
  Department of Mathematics
  \endgraf
  Imperial College London
  \endgraf
  180 Queen's Gate, London SW7 2AZ
  \endgraf
  United Kingdom
  \endgraf
  {\it E-mail address} {\rm m.ruzhansky@imperial.ac.uk}
  }
\thanks{The second author was supported by the
EPSRC Leadership Fellowship EP/G007233/1.
}
\date{}
\subjclass[2010]{Primary 35L10; 35D30; Secondary 46F05;}
\keywords{Hyperbolic equations, Gevrey spaces, ultradistributions, weak solutions}
\begin{document}

\maketitle

\begin{abstract}
In this paper we study weakly hyperbolic second order equations with time dependent irregular coefficients.  
This means to assume that the coefficients are less regular than H\"older. The 
characteristic roots are also allowed to have multiplicities.
For such equations, we describe the notion of a `very weak solution' adapted to the type of solutions that exist for
regular coefficients. The construction is based on considering Friedrichs-type mollifiers of coefficients and
corresponding classical solutions, and
their quantitative behaviour in the regularising parameter.
We show that even for distributional coefficients, the Cauchy problem does have a very weak solution,
and that this notion leads to classical or ultradistributional solutions under conditions when
such solutions also exist. In concrete applications, the dependence on the regularising parameter 
can be traced explicitly. 
\end{abstract}

\section{Introduction}
In this paper we study equations of the type
\beq
\label{intro_eq}
\partial_t^2u(t,x)-\sum_{i=1}^n b_i(t)\partial_t \partial_{x_i}u(t,x)-\sum_{i=1}^n a_i(t)
\partial_{x_i}^2u(t,x)=0,\qquad x\in\R^n,\, t\in[0,T],
\eeq
where the coefficients are real and $a_i\ge 0$ for all $i=1,\dots,n$. It follows that this equation is hyperbolic 
(but not necessarily strictly hyperbolic). 
This kind of equations appears in many physical phenomena where discontinuous or singular entities are involved, 
for instance in the wave propagation in a layered medium. An example is the wave equation
\[
\partial_t^2u(t,x)-\sum_{i=1}^n a_i(t)\partial_{x_i}^2u(t,x)=0,
\]
where the coefficients $a_i$ are Heaviside or Delta functions. In particular, when $n=2$, we can have
the equation
\begin{equation}\label{EQ:wave-example}
\partial_t^2u(t,x)-H_{t_0, t_1}(t)\partial_{x_1}^2u(t,x)-\delta_{t_2}\partial_{x_2}^2u(t,x)=0,
\end{equation}
where $0<t_0< t_1\le t_2\le T$, $H_{t_0,t_1}$ is the jump function with $H_{t_0,t_1}(t)=0$  for $t<t_0$ and $t>t_1$ and $H_{t_0,t_1}(t)=1$ for $t_0\le t\le t_1$, 
and $\delta_{t_2}$ is the Delta function concentrated at $t_2$. In this paper 
we will use the expression 
a real-valued distribution for a distribution $u\in\D(\R)$ such that
$u(\varphi)\in\R$ for all real-valued test functions $\varphi$. 
Similarly, we will write $u\geq 0$ if 
$u(\varphi)\geq 0$ for all non-negative test functions $\varphi\geq 0$.

This is clearly the case of the coefficients above.

As usual, we will often rewrite the equation \eqref{intro_eq} using 
the notation $D_t=-{\rm i}\partial_t$ and $D_{x_i}=-{\rm i}\partial_{x_i}$.
The well-posedness of the corresponding Cauchy problem
\beq
\label{intro_CP}
\begin{split}
D_t^2u(t,x)-\sum_{i=1}^n b_i(t)D_tD_{x_i}u(t,x)-\sum_{i=1}^n a_i(t)D_{x_i}^2u(t,x)&=0,\\
u(0,x)&=g_0,\\
D_t u(0,x)&=g_1,\\
\end{split}
\eeq
has been studied by many authors in the case of regular coefficients. If the coefficients 
$a_i$ and $b_i$ are sufficiently regular we can refer to the fundamental paper by
Colombini, de Giorgi and Spagnolo
\cite{Colombini-deGiordi-Spagnolo-Pisa-1979}, showing that even if the coefficients are smooth,
the well-posedness of the Cauchy problem \eqref{intro_CP} can be expected to hold only in
Gevrey spaces. In fact, the famous example of Colombini and Spagnolo 
\cite{Colombini-Spagnolo:Acta-ex-weakly-hyp} shows that even if all $b_i=0$ and 
all $a_i$ are smooth, the Cauchy problem \eqref{intro_CP} may not be distributionally
well-posed due to multiplicities. On the other hand, if the operator \eqref{intro_eq} is
strictly hyperbolic, it was shown in \cite{Colombini-Jannelli-Spagnolo:Annals-low-reg} that
the Cauchy problem \eqref{intro_CP}  may be still distributionally not well-posed if the
coefficients are less regular, e.g. only H\"older.

These examples, already for the second order equations with time-dependent
coefficients as in \eqref{intro_CP}, show the
following by now well-known qualitative facts:

\begin{itemize}
\item if the coefficients are smooth and the equation is strictly hyperbolic,  
the Cauchy problem \eqref{intro_CP} is distributionally well-posed
(of course, much more is known, but it is less important for our purposes here);
\item if the coefficients are smooth but the equation has multiplicities, then the Cauchy problem 
\eqref{intro_CP} may be not distributionally well-posed. However, it becomes well-posed in the
appropriate classes of ultradistributions (depending on additional properties of coefficients or
characteristic roots);
\item if the equation is strictly hyperbolic but the coefficients are only H\"older continuous,
the Cauchy problem 
\eqref{intro_CP} may be not distributionally well-posed. However, it becomes well-posed in the
appropriate classes of ultradistributions;
\item if the coefficients of the equation are continuous (and not H\"older continuous), there
may be no ultradistributional well-posedness. However, it may become well-posed in the
space of Fourier hyperfunctions.
\end{itemize}
As we see in the above statements, if we want to continue having a well-posedness
result, the reduction in regularity assumptions on the coefficients
leads to the necessity to weaken the notion of solution to the Cauchy problem
and to enlarge the allowed class of solutions. 

A threshold between distributional and ultradistributional well-posedness for equations
with time-dependent coefficients
(on the level of $C^\infty$ and Gevrey well-posedness) in terms of the regularity of
coefficients has been
discussed by Colombini, del Santo and Reissig 
\cite{Colombini-del-Santo-Reissig:BSM-2003}. 
We note that for $x$-dependent coefficients the situation becomes even much more
subtle: for example, while very general Gevrey well-posedness
results are available for Gevrey coefficients
(see, e.g. Bronshtein \cite{Bronshtein:TMMO-1980} or Nishitani \cite{Nishitani:BSM-1983}),
the $C^\infty$ well-posedness of second order equations with smooth coefficients
is heavily dependent on the geometry of characteristics
(see, e.g. 
\cite{Bernardi-Parenti-Parmeggiani:CPDE-2012,Parenti-Parmeggiani:CPDE-2009}).
Again, most of such results can be translated into distributional or ultradistributional
well-posedness, but still for equations with smooth or Gevrey coefficients.

The aim of this paper is to analyse the Cauchy problem \eqref{intro_CP} under much weaker
regularity assumptions on coefficients. The general goal of reducing the regularity of coefficients
for evolution partial differential equations has both mathematical and physical motivations,
and has been thoroughly discussed by Gelfand \cite{Gelfand:analysis-PDEs-UMN-1963}, 
to which we refer also for the philosophical discussion of this topic.

Before we proceed with our approach, let us mention that  
the Cauchy problem \eqref{intro_CP} for operators with irregular coefficients has 
history and motivations from specific applied sciences. 
For example, problems of this type appear in geophysical applications 
with delta-like sources and discontinuous or more irregular media
(for example, fractal-type media occurs naturally in the upper crust of the Earth or in 
fractured rocks), see \cite{Marsan-Bean}, and especially 
\cite{Hormann-de-Hoop:AAM-2001} for a more detailed discussion and further 
references in geophysics and in tomography. 
Such problems have been treated using microlocal constructions in
the Colombeau algebras, see e.g.
H\"ormann and de Hoop \cite{Hormann-de-Hoop:AAM-2001,Hormann-de-Hoop:AA-2002}.
If the coefficients are measurable such equations often fall in the scope of problems
which can be handled by semigroup methods, as in Kato \cite{Kato:bk-1995}.
However, to the best of our knowledge, there are no approaches to problems
with irregularities like those in \eqref{EQ:wave-example}, providing both a well-posedness
statement and a relation to `classical' solutions.

In this paper, we will look at the Cauchy problem \eqref{intro_CP} in different settings, the most
general being that the coefficients $a_i$ and $b_i$ are distributions. In this case, in view of
the famous Schwartz impossibility result on multiplication of distributions
\cite{Schwartz:impossibility-1954}, the first question that already arises is how to interpret the equation
\eqref{intro_eq} when $u$ is a distribution as well. And, a related question for our purposes,
is how to interpret the notion of a solution to the Cauchy problem \eqref{intro_CP}.
In view of the discussion above, it appears natural that in order to obtain solutions in
this setting, one should weaken the notion of a solution to the Cauchy problem since 
ultradistributions or hyperfunctions may not be sufficient for such purpose.
 
The aim and the main results of this paper are to show that 
\begin{itemize}
\item one can introduce the notion of `very weak solutions' to the Cauchy problem
 \eqref{intro_CP}, based on regularising coefficients and the Cauchy data with certain
 adaptation of Friedrichs mollifiers.
 Then, one can show that very weak solutions exist even if the coefficients
 and the Cauchy data are (compactly supported) distributions (Theorem \ref{theo_vws});
\item if the coefficients are sufficiently regular, namely, if they are in the class $C^2$, 
the very weak solutions all coincide in a certain sense, and are related to (coincide with)
other known solutions. More precisely, if the Cauchy data are Gevrey ultradifferentiable
functions, any very weak solution (for any regularisation of the coefficients) converges
in the strong sense
to the classical solution in the limit of the regularisation parameter.
If the Cauchy data are distributions, any very weak solution 
(for any regularisation of the coefficients) converges
in the ultradistributional sense
to the ultradistributional solution in the limit of the regularisation parameter.
See Theorem \ref{theo_vws2} for a precise formulation.
\end{itemize}

The appearance of the class $C^2$ is due to the fact that since we do not assume that the
equation is strictly hyperbolic, the $C^2$-regularity of coefficients does guarantee that the
characteristic roots of \eqref{intro_eq} are Lipschitz, and hence we know that the Gevrey
or ultradistributional well-posedness holds. In the case the equation is strictly hyperbolic,
the assertions above still hold if the coefficients are e.g. Lipschitz. Some further refinements
are possible given precise relations between regularities of coefficients and 
roots of a hyperbolic polynomial
(Bronshtein's theorem \cite{Bronshtein:roots-SMJ-1979} and its refinements as in 
\cite{Colombini-Orru-Pernazza:roots-IJM-2012}). 

The idea of considering regularisations of coefficients or solutions of 
hyperbolic partial differential equations 
in different senses is of course natural. 
For example, after regularising (e.g. non-Lipschitz, H\"older, etc.) coefficients with a parameter $\eps$,
relating $\eps$ to some frequency zones in the energy estimate often yields the Gevrey or even
$C^\infty$ well-posedness (see e.g. 
\cite{Colombini-deGiordi-Spagnolo-Pisa-1979,Colombini-del-Santo-Kinoshita:ASNS-2002}, 
and other papers).
It is not always possible to relate $\eps$ to frequency zones in which case families 
of solutions can be considered as a whole: for example, 
for hyperbolic equations with discontinuous
coefficients regularised families have been already considered by Hurd and Sattinger
\cite{Hurd-Sattinger}, with a subsequent analysis of limits of these regularisations in 
$L^2$ as $\eps\to 0$.

The purpose of this paper is to carry out a thorough analysis of appearing families of solutions
and, by formulating a naturally associated notion of `very weak' solution, to relate it
(as $\eps\to 0$) to known classical, distributional or ultradistributional solutions.

In the next section we provide more specifics to the above statements. In particular, 
we briefly review the relevant ultradistributional well-posedness results, and put the
notion of a very weak solution to a wider context. 

In what concerns the literature review
for second order Cauchy problems \eqref{intro_CP}, we will only give very specific
references relevant to our subsequent purposes: for `regular' coefficients much is known,
for sharp results see e.g. already
Colombini, de Giorgi, Spagnolo \cite{Colombini-deGiordi-Spagnolo-Pisa-1979}, 
Nishitani \cite{Nishitani:BSM-1983}. Also, we do not discuss other interesting 
phenomena on the borderline of the existence of strong solutions
(e.g. irregularity in $t$ can be sometimes
compensated by favourable behaviour in $x$, see e.g.
Cicognani and Colombini \cite{Cicognani-Colombini:JDE-2013}).

\section{Main results}
 
As we mentioned in the introduction, already when the coefficients are regular,
there are several types of assumptions where we can
expect qualitatively different results. On one hand, for very regular data,
we may have well-posedness in the spaces of smooth, Gevrey, or analytic
functions. At the duality level, this corresponds to the well-posedness in
spaces of distributions, ultradistributions, or Fourier hyperfunctions. 
  
We start by recalling the known results for coefficients which are
regular: in \cite{GR:12}, extending the one-dimensional result of 
Kinoshita and Spagnolo in \cite{KS}, 
we have obtained the following well-posedness result:
\begin{theorem}[\cite{GR:12}]\label{THM:GR12}
\leavevmode
\begin{itemize}
\item[(i)]
If the coefficients $a_j$, $b_j$, $j=1,\dots,n$, belong to ${C}^k([0,T])$ for some $k\ge 2$ and
$g_j\in \gamma^s(\R^n)$ for $j=1,2$ then there exists a unique solution $u\in C^2([0,T];\gamma^s(\R^n))$
of the Cauchy problem \eqref{intro_CP} provided that
\[
1\le s<1+\frac{k}{2};
\]
\item[(ii)] if the coefficients are of class $C^\infty$ on $[0,T]$ then the Cauchy problem
\eqref{intro_CP} is well-posed in any Gevrey space;
\item[(iii)] under the hypotheses of (i), if the initial data 
$g_j$ are Gevrey Beurling ultradistributions in $\mathcal E'_{(s)}(\R^n)$ for $j=1,2$ then there exists a unique solution $u\in C^2([0,T];\mathcal{D}'_{(s)}(\R^n))$
of the Cauchy problem \eqref{intro_CP} provided that
\[
1\le s<1+\frac{k}{2};
\]
\item[(iv)] under the hypotheses of (ii) the Cauchy problem \eqref{intro_CP} is well-posed in any space of ultradistributions;
\item[(v)] finally if the coefficients are analytic on $[0,T]$ then the Cauchy problem \eqref{intro_CP} is $C^\infty$ and distributionally well-posed.
\end{itemize}
\end{theorem}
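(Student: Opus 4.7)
The plan is to take the partial Fourier transform in $x$ and reduce the Cauchy problem \eqref{intro_CP} to the parameter dependent second order ordinary differential equation
\begin{equation*}
D_t^2 v(t,\xi)-\sum_{j=1}^n b_j(t)\xi_j\,D_t v(t,\xi)+\sum_{j=1}^n a_j(t)\xi_j^2\,v(t,\xi)=0,\qquad v=\F_{x\to\xi}u,
\end{equation*}
and then to recast it as a $2\times 2$ first order system $\partial_t V=\irm A(t,\xi)V$. The assumption $a_j\ge 0$ makes the eigenvalues $\tau_\pm(t,\xi)$ of the principal symbol real, but they may coalesce, so no uniform lower bound on $\tau_+-\tau_-$ is available. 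The main analytic task is to produce an a priori estimate on $|V(t,\xi)|$ with an explicit $\xi$-dependent growth that is compatible with the target Gevrey class of the data.

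The device is the quasi-symmetriser with a regularising parameter $\eps\in(0,1]$, developed in the multidimensional weakly hyperbolic setting in \cite{KS,GR:12}. One mollifies the coefficients by Friedrichs convolution $a_{j,\eps}=a_j*\rho_\eps$, $b_{j,\eps}=b_j*\rho_\eps$, so that the $C^k$ hypothesis yields $|a_j-a_{j,\eps}|\le C\eps^k$ together with $|\partial_t^\ell a_{j,\eps}|\le C\eps^{-(\ell-k)_+}$, and similarly for $b_j$. Constructing a family $Q_\eps(t,\xi)$ of positive definite quadratic forms that symmetrise the mollified principal symbol, and using an energy
\begin{equation*}
E_\eps(t,\xi)=Q_\eps(t,\xi)V\cdot\ovl V+\eps|V|^2,
\end{equation*}
one obtains, after careful book-keeping, a differential inequality of the form
\begin{equation*}
\tfrac{d}{dt}E_\eps(t,\xi)\le C\bigl(\eps^k\lara\xi+\eps^{-1}\bigr)E_\eps(t,\xi),
\end{equation*}
whose two terms account for the mismatch with the original coefficients and for the time derivative of $Q_\eps$ respectively. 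Gronwall combined with the optimal choice $\eps=\lara\xi^{-2/(k+2)}$ yields, after the sharpening performed in \cite{KS,GR:12},
\begin{equation*}
|V(T,\xi)|\le C\lara\xi^N\exp\bigl(C\lara\xi^{2/(k+2)}\bigr),
\end{equation*}
which is exactly the Gevrey threshold $s<1+k/2$.

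Items (i) and (ii) are then obtained by inverse Fourier transform, since the Paley--Wiener characterisation of $\gamma^s(\R^n)$ converts the above growth bound into the claimed continuity statement; letting $k\to\infty$ covers the $C^\infty$ case in (ii) in every Gevrey class. The ultradistributional parts (iii) and (iv) follow by duality applied to the same estimate, using the Paley--Wiener description of $\E'_{(s)}(\R^n)$ and $\D'_{(s)}(\R^n)$ in terms of Fourier--Laplace growth. For (v), analyticity of the $a_j$ and $b_j$ forces the characteristic roots $\tau_\pm$ to be locally Lipschitz and gives an essentially logarithmic (in $\lara\xi$) growth of the energy, so that only a polynomial loss of derivatives occurs; this yields $C^\infty$ and distributional well-posedness in the classical manner.

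The main obstacle, and the technical heart of \cite{KS,GR:12}, is the construction of $Q_\eps$ and the balance inequality in the weakly hyperbolic regime, where ordinary diagonalisation breaks down at coalescence points of the roots and one must track the dependence of $Q_\eps$ and of $\partial_t Q_\eps$ on both $\eps$ and $\xi$ with sharp constants. It is precisely this step that pins down the exponent $2/(k+2)$ and hence the Gevrey threshold $1+k/2$ in (i) and (iii).
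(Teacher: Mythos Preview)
This theorem is not proved in the present paper; it is quoted from \cite{GR:12}. The paper does, however, reproduce the relevant machinery in Sections~3 and~4 (quasi-symmetriser, near-diagonality, the energy inequality \eqref{EEfinal}--\eqref{Gron_lem}) in an $\eps$-parametrised form, so one can read off the underlying argument there.

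Your overall strategy---Fourier transform in $x$, reduction to a $2\times 2$ system, a parameter-dependent energy, Gronwall, and an optimal choice of the parameter as a power of $\lara{\xi}$---is the correct one. But the mechanism you describe is not the one actually used, and your differential inequality does not produce the threshold $1+k/2$.

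First, in \cite{KS,GR:12} the $C^k$ coefficients are \emph{not} mollified. The regularising parameter (called $\delta$ there and in this paper) lives inside the quasi-symmetriser $Q^{(2)}_\delta=Q^{(2)}_0+2\delta^2\,\mathrm{diag}(1,0)$, not in a Friedrichs convolution of the coefficients. The energy is simply $E_\delta=(Q^{(2)}_\delta V,V)$; there is no separate $\eps|V|^2$ term.

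Second, and more importantly, your inequality $\tfrac{d}{dt}E_\eps\le C(\eps^k\lara{\xi}+\eps^{-1})E_\eps$ gives the wrong balance. Optimising it yields $\eps\sim\lara{\xi}^{-1/(k+1)}$ and an exponential of $\lara{\xi}^{1/(k+1)}$, i.e.\ the threshold $s<k+1$, not $s<1+k/2$; the choice $\eps=\lara{\xi}^{-2/(k+2)}$ you quote is not optimal for the inequality you wrote. The actual argument gives, after integrating in $t$,
\[
\int_0^T K_\delta(t,\xi)\,dt\;\le\; C\,\delta^{-2/k},
\]
via the near-diagonality of $\{Q^{(2)}_\delta\}$ and the $C^k$-lemma (Lemma~\ref{lem_new} here, Lemma~2 in \cite{KS}), together with the commutator bound $|(Q_\delta A-A^\ast Q_\delta)V,V)|\le C\delta(Q_\delta V,V)$. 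Hence Gronwall yields $E_\delta(t)\le E_\delta(0)\exp\bigl(C(\delta^{-2/k}+\delta\lara{\xi})\bigr)$, and balancing $\delta^{-2/k}=\delta\lara{\xi}$ gives $\delta=\lara{\xi}^{-k/(k+2)}$ and the exponent $\lara{\xi}^{2/(k+2)}=\lara{\xi}^{1/(1+k/2)}$. It is precisely this $\delta^{-2/k}$ (coming from $(Q_\delta V,V)^{1/k}\ge C\delta^{2/k}|V|^{2/k}$ in the denominator of $K_\delta$) that pins down $1+k/2$; the $\eps^k$ and $\eps^{-1}$ in your display have no counterpart in the actual proof.
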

For the sake of the reader we briefly recall the definitions of the spaces
$\gamma^s(\Rn)$ and $\gamma^{(s)}(\Rn)$  of (Roumieu) Gevrey functions and (Beurling) Gevrey functions, respectively. These are intermediate classes between analytic functions ($s=1$) and smooth functions. In the sequel, $\N=\{0,1,2,\dots\}$. 

\begin{definition}\label{def_gevrey}
Let $s\geq 1$.
We say that $f\in C^\infty(\R^n)$ belongs to the Gevrey (Roumieu) 
class $\gamma^s(\R^n)$ if for every compact set $K\subset\R^n$ there
exists a constant $C>0$ such that for all $\alpha\in\N^n$ we have the estimate
\[
\sup_{x\in K}|\partial^\alpha f(x)|\le C^{|\alpha|+1}(\alpha!)^s.
\]
We say that $f\in C^\infty(\R^n)$ belongs to the Gevrey (Beurling) 
class $\gamma^{(s)}(\R^n)$ if for every compact set $K\subset\R^n$ and for every $A>0$ there exists a constant $C>0$ such that for all $\alpha\in\N^n$ we have the estimate
\[
\sup_{x\in K}|\partial^\alpha f(x)|\le CA^{|\alpha|+1}(\alpha!)^s.
\]
\end{definition}
Let now $\gamma^{(s)}_c(\R^n)$ be the space of Beurling Gevrey functions with compact support. Its dual is the corresponding space $\D'_{(s)}(\R^n)$ of  ultradistributions and $\mathcal E'_{(s)}(\R^n)$ is the subspace of compactly supported ultradistributions.  We refer to \cite{GR:11} for relevant properties and Fourier characterisations of these spaces of ultradifferentiable functions and ultradistributions.

Going back to the equation \eqref{intro_eq} and the corresponding Cauchy problem, we know that dropping the regularity assumption on the coefficients from $C^k$ to $C^{2\alpha}$, with $\alpha\in(0,1]$, we still get Gevrey and ultradistributional well-posedness as proved e.g. in \cite{ColKi:02} for $n=1$, and in \cite{GR:11} for general $n$:
\begin{theorem}[\cite{ColKi:02}]
Assume that the characteristic roots of the equation \eqref{intro_eq} are of class $C^\alpha$, $\alpha\in(0,1]$ in $t$.
\begin{itemize}
\item[(i)] Let $g_j\in \gamma^s(\R^n)$ for $j=1,2$. Hence, the Cauchy problem \eqref{intro_CP} 
has a unique solution $u\in C^2([0,T], \gamma^s(\R^n))$ provided that
\[
1\le s<1+\alpha.
\]
\item[(ii)] Let $g_j\in \E'_{(s)}(\R^n)$ for $s=1,2$. Hence, the Cauchy problem \eqref{intro_CP} has a unique solution $u\in C^2([0,T], \D'_{(s)}(\R^n))$ provided that
\[
1\le s\le 1+\alpha.
\]
\item[(iii)] If the roots are distinct then Gevrey and ultradistributional well-posedness hold provided that 
\[
1\le s<1+\frac{\alpha}{1-\alpha}
\]
and
\[
1\le s\le1+\frac{\alpha}{1-\alpha},
\]
respectively.
\end{itemize}
\end{theorem}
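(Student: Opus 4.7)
The plan is to reduce the Cauchy problem to a family of ODEs in $t$ parameterised by the Fourier variable $\xi$, to regularise the characteristic roots at a scale $\eps=\eps(\xi)$ depending on the frequency, and to balance the loss coming from the regularisation against the H\"older error so that the growth of $\widehat{u}(t,\xi)$ in $\xi$ matches the Fourier characterisation of $\gamma^s$ or $\D'_{(s)}$. Concretely, setting $v(t,\xi)=\widehat{u}(t,\xi)$, the equation \eqref{intro_eq} becomes, for each fixed $\xi\in\R^n$, the second order linear ODE
\[
\partial_t^2 v(t,\xi)+\irm\bigl(\textstyle\sum_i b_i(t)\xi_i\bigr)\partial_t v(t,\xi)+\bigl(\textstyle\sum_i a_i(t)\xi_i^2\bigr)v(t,\xi)=0,
\]
whose characteristic roots $\tau_\pm(t,\xi)$ are homogeneous of degree one in $\xi$ and, by assumption, of class $C^\alpha$ in $t$. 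Rewriting as a $2\times 2$ first order system $\partial_t V=\irm A(t,\xi)V$ with $V=(|\xi|v,\partial_t v)^T$ places us in the setting where Jannelli's (quasi-)symmetrizer machinery applies.

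Next, fix a Friedrichs mollifier $\varphi_\eps$ and set $a_i^\eps=a_i*\varphi_\eps$, $b_i^\eps=b_i*\varphi_\eps$, producing regularised characteristic roots $\tau_\pm^\eps$ that satisfy the standard H\"older estimates
\[
|\tau_\pm^\eps-\tau_\pm|\lesssim\eps^\alpha|\xi|,\qquad |\partial_t\tau_\pm^\eps|\lesssim\eps^{\alpha-1}|\xi|.
\]
In the weakly hyperbolic cases (i)--(ii), where the roots may coincide, one further perturbs by a term of order $\eps^2|\xi|^2$ in the spirit of Colombini--De~Giorgi--Spagnolo so that the modified system becomes strictly hyperbolic with an artificial root separation $\sim\eps|\xi|$. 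With the associated (quasi-)symmetrizer $Q_\eps$ one defines the energy $E(t,\xi)=\langle Q_\eps(t,\xi)V(t,\xi),V(t,\xi)\rangle$, and differentiation along solutions together with the splitting $A=A^\eps+(A-A^\eps)$ yields a differential inequality of the form
\[
\partial_t E(t,\xi)\le\Phi(\eps,|\xi|)\,E(t,\xi),
\]
where $\Phi$ gathers a contribution $\eps^{\alpha-1}|\xi|/\delta_\eps$ coming from the derivative of $Q_\eps$ (with $\delta_\eps$ the root separation) and a contribution $\eps^\alpha|\xi|$ coming from the commutator error.

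The crucial step is then the choice $\eps=\eps(|\xi|)$ that makes $\int_0^T\Phi\,dt$ grow as slowly as possible in $|\xi|$. In the strictly hyperbolic case (iii) one has $\delta_\eps\gtrsim|\xi|$, and the balance $\eps\sim|\xi|^{-1}$ gives
\[
|\widehat u(t,\xi)|^2\lesssim E(t,\xi)\le E(0,\xi)\,\exp\bigl(c|\xi|^{1-\alpha}\bigr);
\]
converting this pointwise bound through the Fourier characterisation of Gevrey functions and ultradistributions recalled in \cite{GR:11} yields well-posedness precisely in the range $s<1+\frac{\alpha}{1-\alpha}$, with the endpoint $s\le 1+\frac{\alpha}{1-\alpha}$ covered at the ultradistributional level by a finer bookkeeping of the constants. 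In the weakly hyperbolic cases (i)--(ii), $\delta_\eps\sim\eps|\xi|$ produces an extra $\eps^{-1}$ loss in $\Phi$, and a correspondingly more delicate optimisation---which is the technical core of the arguments in \cite{ColKi:02} and \cite{GR:11}---produces the growth rate $\exp(c|\xi|^{1/(1+\alpha)})$ and hence the weaker range $s<1+\alpha$ (respectively $s\le1+\alpha$). Uniqueness in each case follows by applying the same energy inequality to the difference of two candidate solutions.

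The main technical obstacle is the construction and the $\eps,\xi$-uniform control of the quasi-symmetrizer $Q_\eps$ in the weakly hyperbolic case, where the natural diagonaliser of $A$ is singular on the multiple set of the characteristic roots. The way this singularity is absorbed by the $\eps^2|\xi|^2$ perturbation is exactly what is responsible for the inverse-separation loss in $\Phi$, and hence for the weaker Gevrey threshold $s<1+\alpha$ in (i)--(ii) as compared with $s<1+\alpha/(1-\alpha)$ in the strictly hyperbolic case (iii).
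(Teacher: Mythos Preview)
The paper does not prove this theorem; it is quoted verbatim as a known result from \cite{ColKi:02} (with the ultradistributional companion statements coming from \cite{GR:11}), so there is no proof in the paper to compare your proposal against. Your sketch is, at the level of strategy, a fair summary of how these results are obtained in the cited references: pass to the Fourier side, mollify the $C^\alpha$ coefficients at scale $\eps$, build an approximate energy via a (quasi-)symmetriser, and choose $\eps=\eps(|\xi|)$ to balance the loss $\eps^{\alpha-1}$ from differentiating the mollified coefficients against the commutator error $\eps^\alpha|\xi|$.

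That said, your bookkeeping in the weakly hyperbolic case (i)--(ii) is not quite right as written. If one literally takes $\Phi\sim \eps^{\alpha-1}|\xi|/\delta_\eps+\eps^\alpha|\xi|$ with artificial separation $\delta_\eps\sim\eps|\xi|$, the first term becomes $\eps^{\alpha-2}$ and the optimal balance $\eps=|\xi|^{-1/2}$ yields growth $\exp(c|\xi|^{1-\alpha/2})$, hence only $s<\frac{2}{2-\alpha}$, which is strictly worse than the claimed $s<1+\alpha$. The actual argument in \cite{ColKi:02} (and in the $C^k$ analogue carried out in Section~\ref{sec_quasi} of the present paper) does not estimate $\partial_t Q_\delta$ pointwise through the root separation; it uses instead the integral inequality of the type in Lemma~\ref{lem_new}, which controls $\int_0^T |(\partial_t Q_\delta V,V)|/(Q_\delta V,V)\,dt$ by $\delta^{-2/k}\|Q_\delta\|_{C^k}^{1/k}$ via a Glaeser-type argument. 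It is this sharper integral bound, not the naive $|\partial_t Q|/\delta$ estimate, that produces the exponent $|\xi|^{1/(1+\alpha)}$ (equivalently $|\xi|^{1/\sigma}$ with $\sigma=1+k/2$ in the $C^k$ setting) and hence the correct Gevrey threshold. Your sketch implicitly acknowledges this by deferring the ``more delicate optimisation'' to the references, but the heuristic you wrote down before that deferral would not close.
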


It is our purpose in this paper to prove well-posedness of the Cauchy problem \eqref{intro_CP} 
when the coefficients are less than H\"older.

The first main idea now is to start from distributional coefficients $a_i$ and $b_i$, $i=1,\dots, n$, to regularise them by convolution with a suitable mollifier $\psi$ obtaining families of smooth functions $(a_{i,\eps})_\eps$ and $(b_{i,\eps})_\eps$, namely 
\begin{equation}\label{EQ:regs}
a_{i,\eps}=a\ast\psi_{\omega(\eps)} \textrm{ and } b_{i,\eps}=b_i\ast\psi_{\omega(\eps)}, 
\end{equation}
where  $\psi_{\omega(\eps)}(t)=\omega(\eps)^{-1}\psi(t/\omega(\eps))$ and $\omega(\eps)$ is a positive function converging to $0$ as $\eps\to 0$. It turns out that the nets $(a_{i,\eps})_\eps$ and $(b_{i,\eps})_\eps$ are $C^\infty$-\emph{moderate}, in the sense that their $C^\infty$-seminorms can be estimated by a negative power of $\eps$ (see \eqref{mod_C_inf}). More precisely, we will make use of the following notions of moderateness.

In the sequel, the notation $K\Subset\R^n$ means that $K$ is a compact set in $\R^n$.
\begin{definition}
\label{def_mod_intro}
\leavevmode
\begin{itemize}
\item[(i)] A net of functions $(f_\eps)_\eps\in C^\infty(\R^n)^{(0,1])}$ is $C^\infty$-moderate if for all $K\Subset\R^n$ and for all $\alpha\in\N^n$ there exist $N\in\N$ and $c>0$ such that
\[
\sup_{x\in K}|\partial^\alpha f_\eps(x)|\le c\eps^{-N},
\]
for all $\eps\in(0,1]$. 
\item[(ii)] A net of functions $(f_\eps)_\eps\in \gamma^s(\R^n)^{(0,1]}$ is $\gamma^s$-moderate if for all $K\Subset\R^n$ there exists a constant $c_K>0$ and there exists $N\in\N$ such that 
\[
|\partial^\alpha f_\eps(x)|\le c_K^{|\alpha|+1}(\alpha !)^s \eps^{-N-|\alpha|},
\]
for all $\alpha\in\N^n$, $x\in K$ and $\eps\in(0,1]$.
\item[(iii)] A net of functions $(f_\eps)_\eps\in C^\infty([0,T];\gamma^s(\R^n))^{(0,1]}$ is $C^\infty([0,T];\gamma^s(\R^n))$-moderate if for all $K\Subset\R^n$ there exist $N\in\N$, $c>0$ and, for all $k\in\N$ there exist $N_k>0$ and $c_k>0$ such that
\[
|\partial_t^k\partial^\alpha_x u_\eps(t,x)|\le c_k\eps^{-N_k} c^{|\alpha|+1}(\alpha !)^s \eps^{-N-|\alpha|},
\]
for all $\alpha\in\N^n$, for all $t\in[0,T]$, $x\in K$ and $\eps\in(0,1]$.
\end{itemize}
\end{definition}
We note that the conditions of moderateness are natural in the sense that regularisations of distributions or
ultradistributions are moderate, namely we can think that
\begin{equation}\label{EQ:incls}
\textrm{ compactly supported distributions } \E'(\R^n)\subset \{C^\infty \textrm{-moderate families}\}
\end{equation}
by the structure theorems for distributions, while also
the regularisations of the compactly supported Gevrey ultradistributions can be shown to be Gevrey-moderate.

Thus, while a solution to a Cauchy problems may not exist in the space on the left hand side of 
an inclusion like the one in \eqref{EQ:incls}, it may still exist (in a certain appropriate sense)
in the space on its right hand side. The moderateness assumption will be enough for our
purposes. However, we note that regularisation with standard Friedrichs mollifiers will not be
sufficient, hence the introduction of a family $\omega(\eps)$ in the above regularisations.

We can now introduce a notion of a `very weak solution' for the Cauchy problem \eqref{intro_CP}.
\begin{definition}
\label{def_vws}
Let $s\ge1$. The net $(u_\eps)_\eps\in C^\infty([0,T];\gamma^s(\R^n))$ is a very weak solution of order $s$ of the 
Cauchy problem \eqref{intro_CP} if there exist 
\begin{itemize}
\item[(i)] $C^\infty$-moderate regularisations $a_{i,\eps}$ and $b_{i,\eps}$ of the coefficients $a_i$ and $b_i$, respectively, for $i=1,\dots,n$, 
\item[(ii)] $\gamma^s$-moderate regularisations $g_{0,\eps}$ and $g_{1,\eps}$ of the initial data $g_0$ and $g_1$, 
respectively,
\end{itemize}
such that $(u_\eps)_\eps$ solves the regularised problem
\[
\begin{split}
D_t^2u(t,x)-\sum_{i=1}^n b_{i,\eps}(t)D_tD_{x_i}u(t,x)-\sum_{i=1}^n a_{i,\eps}(t)D_{x_i}^2u(t,x)&=0,\\
u(0,x)&=g_{0,\eps},\\
D_t u(0,x)&=g_{1,\eps},\\
\end{split}
\]
for all $\eps\in(0,1]$, and is $C^\infty([0,T];\gamma^s(\R^n))$-moderate.
\end{definition}
The main results of this paper can be summarised as the following solvability statement complemented
by the uniqueness and consistency in Theorem \ref{theo_vws2}.
\begin{theorem}
\label{theo_vws}
Let the coefficients $a_i, b_i$ of the Cauchy problem \eqref{intro_CP} be distributions 
with compact support included in $[0,T]$, such that $a_i, b_i$ are real-valued and $a_i\ge 0$ for all $i=1,\dots,n$. 
Let the Cauchy data $g_0, g_1$ be compactly supported distributions.
Then, the Cauchy problem \eqref{intro_CP} has a very weak solution of order $s$, for all $s>1$.
\end{theorem}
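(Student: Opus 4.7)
The plan is to construct a very weak solution by regularising the coefficients and Cauchy data at a carefully chosen scale $\omega(\eps)\to 0$, invoking the smooth-coefficient well-posedness of Theorem \ref{THM:GR12}(ii) at each $\eps$, and quantifying the growth of the resulting classical solutions in $\eps$.

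\emph{Regularisation and regularised solution.} Fix a non-negative $\psi\in\D(\R)$ with $\int\psi=1$ and a non-negative mollifier $\Psi\in\gamma^{(s)}_c(\R^n)$ with $\int\Psi=1$, and set
\[
a_{i,\eps}:=a_i*\psi_{\omega(\eps)},\quad b_{i,\eps}:=b_i*\psi_{\omega(\eps)},\quad g_{j,\eps}:=g_j*\Psi_{\omega(\eps)}.
\]
Non-negativity of $\psi$ propagates to $a_{i,\eps}\geq 0$, preserving weak hyperbolicity. Writing each compactly supported distribution as a finite sum of derivatives of continuous functions (structure theorem) and transferring the derivatives onto the mollifier yields
\[
\|\partial_t^k a_{i,\eps}\|_{L^\infty}+\|\partial_t^k b_{i,\eps}\|_{L^\infty}\leq C_k\,\omega(\eps)^{-N-k},\quad |\partial^\alpha g_{j,\eps}(x)|\leq C^{|\alpha|+1}(\alpha!)^s\,\omega(\eps)^{-M-|\alpha|},
\]
where $N,M$ depend on the orders of the distributions involved. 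Provided $\omega(\eps)^{-1}\leq c\eps^{-r}$ eventually, these bounds yield the $C^\infty$-moderateness of $(a_{i,\eps})_\eps,(b_{i,\eps})_\eps$ and the $\gamma^s$-moderateness of $(g_{j,\eps})_\eps$ demanded by Definition \ref{def_vws}. Since the regularised coefficients are smooth with $a_{i,\eps}\geq 0$, Theorem \ref{THM:GR12}(ii) provides a unique $u_\eps\in C^2([0,T];\gamma^s(\R^n))$ of the regularised Cauchy problem for every $s\geq 1$, upgraded to $u_\eps\in C^\infty([0,T];\gamma^s(\R^n))$ by iteratively solving the equation for $\partial_t^{k+2}u_\eps$ in terms of lower $t$-derivatives.

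\emph{Moderateness of $(u_\eps)_\eps$; the main obstacle.} Tracing the energy estimate underlying Theorem \ref{THM:GR12}(ii)---a Kinoshita--Spagnolo/Colombini--Jannelli--Spagnolo argument obtained by Fourier transform in $x$, reduction to a first-order $2\times 2$ system in $t$, and symmetrisation with the characteristic roots of the regularised operator---one obtains, for any fixed $s>1$, a Gevrey bound of the form
\[
\|u_\eps(t,\cdot)\|_{\gamma^s,K}\leq C\,\exp\bigl(C\,\omega(\eps)^{-L}\bigr)\bigl(\|g_{0,\eps}\|_{\gamma^s,K'}+\|g_{1,\eps}\|_{\gamma^s,K'}\bigr),
\]
with a fixed exponent $L=L(s,N)$ dictated by the dependence of the energy constant on finitely many $C^\kappa$-norms of the coefficients, and with $K'\supset K$ an enlargement of $K$ by the (finite) propagation speed of the regularised operator. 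The key choice is then the logarithmic scale
\[
\omega(\eps):=(\log\eps^{-1})^{-1/L},
\]
which converts $\exp(C\,\omega(\eps)^{-L})=\exp(C\log\eps^{-1})=\eps^{-C}$ into a moderate factor, while $\omega(\eps)^{-1}\leq\eps^{-\eta}$ for every $\eta>0$ (eventually), so that the first-step moderateness of coefficients and data is preserved. Combined with the $\gamma^s$-moderateness of $(g_{j,\eps})_\eps$, the above estimate yields the required $\gamma^s$-moderate bound on $u_\eps$ in $x$; recursive use of the equation to express $\partial_t^{k+2}u_\eps$ in terms of $\partial_t^j\partial_x^\beta u_\eps$ with $j\leq k+1$ and moderate coefficients then promotes the estimate to the full $C^\infty([0,T];\gamma^s(\R^n))$-moderateness. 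This exhibits $(u_\eps)_\eps$ as a very weak solution of order $s$ for every $s>1$, proving the theorem. The only delicate balance is between the need for $\omega(\eps)$ to tend to $0$ slowly enough that the exponential loss in the energy estimate is dominated by a negative power of $\eps$, and fast enough that the initial convolution indeed approximates the distributional coefficients: the logarithmic scale realises both constraints simultaneously.
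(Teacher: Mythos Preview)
Your approach is the same as the paper's in outline: regularise coefficients and data at a scale $\omega(\eps)$, solve the smooth-coefficient problem for each $\eps$ (Theorem~\ref{THM:GR12}), and choose $\omega(\eps)$ of logarithmic type so that the family $(u_\eps)_\eps$ is moderate. The paper carries this out by an explicit quasi-symmetriser energy estimate on the Fourier side, obtaining a \emph{frequency-dependent} bound
\[
|V_\eps(t,\xi)|\le C\,\omega(\eps)^{-2L}\langle\xi\rangle^{k/(2\sigma)}\,|V_\eps(0,\xi)|\,\mathrm{e}^{\,C\,\omega(\eps)^{-M}\langle\xi\rangle^{1/\sigma}},
\qquad \sigma=1+\tfrac{k}{2},
\]
and then splits into the regions $\langle\xi\rangle\ge R_\eps$ and $\langle\xi\rangle< R_\eps$, absorbing the exponential loss into the Gevrey decay $\mathrm{e}^{-c\,\eps^{1/s}\langle\xi\rangle^{1/s}}$ of the regularised data in the first region (this is where $s<\sigma$, i.e.\ $k$ large enough for the given $s$, is used) and bounding the second region by the logarithmic choice of $\omega(\eps)$. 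Your displayed inequality
\[
\|u_\eps(t,\cdot)\|_{\gamma^s,K}\le C\,\exp\bigl(C\,\omega(\eps)^{-L}\bigr)\,\|g_\eps\|_{\gamma^s,K'}
\]
is a correct \emph{consequence} of this (one can trade $\omega(\eps)^{-M}\langle\xi\rangle^{1/\sigma}$ against $\tfrac{c}{2}\langle\xi\rangle^{1/s}+C'\omega(\eps)^{-L}$ via Young's inequality), but it is not what the energy method gives directly; presenting it as a black box hides exactly the mechanism that constitutes the heart of the argument, and in particular the role of the parameter $k$ and the constraint $s<1+k/2$ are invisible in your statement.

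One further point deserves care. Your enlargement $K'\supset K$ ``by the finite propagation speed of the regularised operator'' is problematic: the characteristic speeds of the $\eps$-equation are of order $\omega(\eps)^{-L}\to\infty$, so $K'$ would have to depend on $\eps$ and the local Gevrey seminorms on the right would not be uniformly controlled. The paper avoids this by working globally on the Fourier side with the compactly supported data and never localising in $x$; your argument can be repaired in the same way, but the propagation-speed justification as written does not stand.
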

In fact, Theorem \ref{theo_vws} will be refined according to the regularity of the initial data. 
More precisely, we will distinguish between the following cases:
\begin{itemize}
\item[{\bf Case 1}:] distributional coefficients and Gevrey initial data;
\item[{\bf Case 2}:] distributional coefficients and smooth initial data;
\item[{\bf Case 3}:] distributional coefficients and distributional initial data.
\end{itemize}

The uniqueness and consistency result for very weak solutions 
of the Cauchy problem \eqref{intro_CP} is as follows.
We distinguish between Gevrey Cauchy data and the general distributional
Cauchy data:

\begin{theorem}
\label{theo_vws2}
Assume that the real-valued coefficients $a_i$ and $b_i$ are compactly supported, belong to $C^k([0,T])$ with $k\ge 2$ and 
that $a_i\ge 0$ for all 
$i=1,\dots,n$. Let $1<s<1+\frac{k}{2}$.

\begin{itemize}
\item
Let $g_0, g_1\in \gamma^s_c(\R^n)$. 
Then any very weak solution $(u_\eps)_\eps$ converges in the space
$C([0,T];\gamma^s(\R^n))$ as $\eps\to0$ to the unique classical solution in 
$C^2([0,T], \gamma^s(\R^n))$.
In particular, 
this limit exists and does not depend on the $C^\infty$-moderate regularisation of 
the coefficients.
\item
Let $g_0, g_1\in \E'(\R^n)$.
Then any very weak solution $(u_\eps)_\eps$ converges in the space
$C([0,T];\D'_{(s)}(\R^n))$ to the unique solution 
in $C^2([0,T],\D'_{(s)}(\R^n))$.
In particular, 
this limit exists and does not depend on the $C^\infty$-moderate regularisation of 
coefficients $a_i$ and $b_i$ and the Gevrey-moderate regularisation
of the initial data $g_0,g_1$. 
\end{itemize}
\end{theorem}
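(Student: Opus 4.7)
\medskip
\noindent\textbf{Proof plan.} Under the standing assumptions Theorem~\ref{THM:GR12} delivers a reference solution $u$: a classical Gevrey solution $u\in C^2([0,T];\gamma^s(\R^n))$ when $g_0,g_1\in\gamma^s_c(\R^n)$ by part~(i), and an ultradistributional solution $u\in C^2([0,T];\D'_{(s)}(\R^n))$ when $g_0,g_1\in\E'(\R^n)\subset\E'_{(s)}(\R^n)$ by part~(iii). Fix any very weak solution $(u_\eps)_\eps$ with admissible regularisations $a_{i,\eps},b_{i,\eps},g_{0,\eps},g_{1,\eps}$. The difference $v_\eps:=u_\eps-u$ satisfies the inhomogeneous Cauchy problem
\[
D_t^2 v_\eps - \sum_{i=1}^n b_{i,\eps}(t)D_tD_{x_i}v_\eps - \sum_{i=1}^n a_{i,\eps}(t)D_{x_i}^2 v_\eps = F_\eps,
\quad v_\eps(0)=g_{0,\eps}-g_0,\ D_t v_\eps(0)=g_{1,\eps}-g_1,
\]
with source $F_\eps:=\sum_i(b_{i,\eps}-b_i)D_tD_{x_i}u+\sum_i(a_{i,\eps}-a_i)D_{x_i}^2u$. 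Since $a_i,b_i\in C^k$ have compact support, the mollifications $a_{i,\eps},b_{i,\eps}$ converge to $a_i,b_i$ in $C^k([0,T])$ and are bounded in $C^k$ uniformly in~$\eps$.

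For the Gevrey case I would apply the Gevrey energy scheme underlying Theorem~\ref{THM:GR12}(i) to the above inhomogeneous problem for $v_\eps$. The estimates produced there depend only on the $C^k$-norms of the coefficients and are therefore uniform in $\eps$, yielding an inequality of the form
\[
\sup_{t\in[0,T]}\|v_\eps(t)\|_{\gamma^s}\;\le\;C\Bigl(\|g_{0,\eps}-g_0\|_{\gamma^s}+\|g_{1,\eps}-g_1\|_{\gamma^s}+\int_0^T\|F_\eps(\tau)\|_{\gamma^s}\,d\tau\Bigr)
\]
with $C$ independent of $\eps$. The right-hand side tends to zero: the initial data contributions because any $\gamma^s$-moderate regularisation of a compactly supported $\gamma^s$ function converges to it in $\gamma^s$ on compacts, and the source because $u\in C^2([0,T];\gamma^s)$ and $a_{i,\eps}-a_i,\,b_{i,\eps}-b_i\to 0$ in $C^0$. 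Thus $u_\eps\to u$ in $C([0,T];\gamma^s(\R^n))$, and the limit is manifestly independent of the chosen regularisations.

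For the distributional case the argument proceeds by duality. Given $\phi\in\gamma^{(s)}_c(\R^n)$ I would solve the backward adjoint Cauchy problem with datum $\phi$ at $t=T$: because the coefficients depend only on $t$, the formal adjoint has the same principal part plus lower-order terms with $C^{k-1}$ coefficients coming from integration by parts on $b_i(t)D_tD_{x_i}$, so Theorem~\ref{THM:GR12}(i) again produces a Gevrey test solution $\Phi_\eps$ that is uniformly bounded in $C([0,T];\gamma^{(s)}_c)$. Pairing with $v_\eps$ and integrating by parts in $t$ on $[0,T]$ expresses $\langle v_\eps(t),\phi\rangle$ as the sum of boundary contributions in $g_{j,\eps}-g_j$ (which tend to zero in $\E'_{(s)}$ by moderateness and compact support) and a volume term in $F_\eps$ tested against $\Phi_\eps$ (which tends to zero since $a_{i,\eps}-a_i,\,b_{i,\eps}-b_i\to 0$ uniformly and $u\in C^2([0,T];\D'_{(s)})$). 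Passing to the limit yields convergence in $C([0,T];\D'_{(s)}(\R^n))$.

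The main obstacle in both steps is to make the Gevrey (or dual) energy estimate \emph{uniform} in the $\eps$-dependent coefficients $a_{i,\eps},b_{i,\eps}$. This is precisely what the $C^k$-regularity of the original coefficients buys: it upgrades the convergence $a_{i,\eps}\to a_i$, $b_{i,\eps}\to b_i$ to a $C^k$ statement, so that the constants in the estimate of Theorem~\ref{THM:GR12}(i) remain bounded as $\eps\to 0$. Without this upgrade one would only retain the existence part (Theorem~\ref{theo_vws}), not the consistency claimed here.
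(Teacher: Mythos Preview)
Your treatment of the Gevrey case is essentially the paper's own argument: form the difference $v_\eps=u_\eps-\wt u$ (the paper uses the classical solution $\wt u$), obtain an inhomogeneous Cauchy problem with source $n_\eps$ built from $(a_{i,\eps}-a_i)$ and $(b_{i,\eps}-b_i)$, and close via an energy estimate whose constants depend only on $\|a_{i,\eps}\|_{C^k},\|b_{i,\eps}\|_{C^k}$, hence are uniform in~$\eps$. The paper carries this out explicitly on the Fourier side, reducing to the first order system \eqref{eq_V_eps} and invoking the quasi-symmetriser estimate \eqref{est_uniqueness}; because the coefficients are genuinely $C^k$ the factor $\omega(\eps)^{-N}$ drops and one obtains \eqref{impor_est_const}, from which convergence in $C([0,T];\gamma^s)$ is read off directly.

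For the ultradistributional case your route diverges from the paper. You propose a duality argument via the backward adjoint Cauchy problem with Gevrey test data. The paper does \emph{not} do this: it reuses the very same pointwise Fourier estimate \eqref{impor_est_const}. The point is that for compactly supported data and right-hand side, $(\wt V-V_\eps)(0,\xi)$ and $\widehat{n_\eps}(t,\xi)$ are honest functions of~$\xi$ whose decay/growth encodes the $\gamma^s$ or $\D'_{(s)}$ topology, so the single inequality \eqref{impor_est_const} simultaneously yields convergence in $C([0,T];\gamma^s)$ when the data are Gevrey and in $C([0,T];\D'_{(s)})$ when the data are ultradistributional. This unified Fourier approach is shorter and avoids the extra work your plan requires: construction of the adjoint (which acquires a lower-order term $-(D_tb_i)D_{x_i}$ with $C^{k-1}$ coefficients, so you must check that the cited well-posedness result tolerates such terms), uniform-in-$\eps$ Gevrey bounds on the family $\Phi_\eps$, and control of the boundary terms after integration by parts. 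Your duality scheme is not wrong in principle, but it is a detour compared with the paper's direct argument.

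One small caution in your Gevrey step: the claim that ``any $\gamma^s$-moderate regularisation of a compactly supported $\gamma^s$ function converges to it in $\gamma^s$'' is not a consequence of moderateness alone. Moderateness is only an upper bound. What actually forces $g_{j,\eps}\to g_j$ is the specific mollifier structure of the regularisations (cf.\ Proposition~\ref{prop_reg_gevrey_sim}(ii)); the paper sidesteps this by noting that for Gevrey data no regularisation is needed, so one may take $g_{j,\eps}=g_j$ and then absorb any other representative via a $C^\infty([0,T];\gamma^s)$-negligible correction.
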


In Theorem \ref{theo_vws2}, we assume that $1<s<1+\frac{k}{2}$ in order to
make sure that the unique classical or ultradistributional solutions exist,
provided by Theorem \ref{THM:GR12}. 
Theorem \ref{theo_vws2} will follow from Theorem \ref{theo_consistency}.

The proof of Theorem \ref{theo_vws} relies on classical techniques for weakly hyperbolic equations (quasi-symmestriser, energy estimates, Gevrey-wellposedness, etc.) and ideas from generalised function theory (regularisation). In particular, proving the existence of a very weak solution coincides, by fixing the mollifiers,  with proving well-posedness of the corresponding Cauchy problem in a suitable space of Colombeau type. This space will be chosen according to the regularity of the initial data. So, the proof of Theorem \ref{theo_vws} will follow from the well-posedness results in Theorems  \ref{theo_CP_1}, \ref{theo_CP_2} and \ref{theo_CP_3}.

We note that the proof of Theorem \ref{theo_vws} actually provides us with 
a description of possible regularisations, in particular, of
functions $\omega(\eps)$ used in the regularisation of coefficients
in \eqref{EQ:regs}. Indeed, $\omega(\eps)$ will be of the type $c(\log(\eps^{-1}))^{-r}$ or of the type $c(\log(\eps^{-1}))^{-r_1}\eps^{-r_2}$,
for $c>0$ and $r,r_1,r_2>0$.

We note that the idea of considering regularisations of coefficients and solutions of partial differential equations 
in different senses has been seen in the literature. 
For example, after regularising (e.g. non-Lipschitz, H\"older) coefficients with a parameter $\epsilon$,
relating $\epsilon$ to some frequency zone in the energy estimate often yields the Gevrey or even
$C^\infty$ well-posedness (see e.g. Colombini, del Santo and Kinoshita
\cite{Colombini-del-Santo-Kinoshita:ASNS-2002} and other papers).
For less regularity, e.g. for hyperbolic equations with discontinuous
coefficients regularised families have been already considered by Hurd and Sattinger
\cite{Hurd-Sattinger}, with a subsequent analysis of limits of these regularisations in $L^2$ as
$\eps\to 0$.
An interesting result of well-posedness has been obtained for discontinuous and in general distributional coefficients in the Colombeau context by Lafon and Oberguggenberger \cite{LO:91}.
In their paper they proved that first order symmetric systems of differential equations with 
Colombeau coefficients and Colombeau initial data have a unique Colombeau solution under 
suitable logarithmic type assumptions on the principal part. This result, while it can be easily extended to 
pseudo-differential systems, cannot be directly applied to our equation, since
the system to which would can reduce our equations is in general, non-symmetric and non-strictly hyperbolic.  

It will be useful also to us to use the developed machinery of Colombeau algebras in the proofs.
Especially, this will provide an easy-to-get refinement of the uniqueness part of the corresponding statements.
However, we need to work in algebras of generalised functions based on regularisations with Gevrey
functions since smooth solutions do not have to exist due to multiplicities.

As mentioned above, we will employ quasi-symmetriser techniques, or more precisely, a parametrised version of the quasi-symmetriser seen in \cite{GR:12}. This is the topic of the next section.

\section{Parameter dependent quasi-symmetriser}
\label{sec_quasi}

In this paper, we will be applying the standard reduction of a scalar second order equation to
the $2\times 2$ system: setting
\[
u_j=D_t^{j-1}\lara{D_x}^{2-j}u,\qquad j=1,2,
\]
we transform the equation
\[
D_t^2u(t,x)-\sum_{i=1}^n b_i(t)D_tD_{x_i}u(t,x)-\sum_{i=1}^n a_i(t)D_{x_i}^2u(t,x)=0
\]
into the hyperbolic system
\beq
\label{system_hyp}
D_t\left(
                             \begin{array}{c}
                               u_1 \\
                                u_2 \\
                             \end{array}
                           \right)
= \left(
    \begin{array}{cc}
      0 & \lara{D_x}\\
      \sum_{i=1}^n a_i(t)D_{x_i}^2\lara{D_x}^{-1}& \sum_{i=1}^n b_i(t)D_{x_i} \\
           \end{array}
  \right)
  \left(\begin{array}{c}
                               u_1 \\
                               
                               u_2 \\
                             \end{array}
                           \right),
\eeq

We now assume that the equation coefficients are distributions with compact support contained in $[0,T]$. Since the formulation of \eqref{intro_eq} might be impossible due to issues related to the product of distributions, we replace \eqref{intro_eq} with a regularised equation. In other words, we regularise every $a_i$ and $b_i$ by convolution with a mollifier in $C^\infty_c(\R^n)$ and get nets of smooth functions as coefficients. More precisely, let $\psi\in C^\infty_c(\R)$, $\psi\ge 0$ with $\int\psi=1$ and let $\omega(\eps)$ be a positive function converging to $0$ as $\eps\to 0$. Define
\[
\psi_{\omega(\eps)}(t):=\frac{1}{\omega(\eps)}\psi\big(\frac{t}{\omega(\eps)}\big),
\]
\[
a_{i,\eps}(t):=(a_i\ast \psi_{\omega(\eps)})(t),\qquad t\in[0,T]
\]
and
\[
b_{i,\eps}(t):=(b_i\ast \psi_{\omega(\eps)})(t),\qquad t\in[0,T].
\]
By the structure theorem for compactly supported distributions, we have that there exists $L\in\N$ and $c>0$ such that
\[
|a_{i,\eps}(t)|\le c\,\omega(\eps)^{-L},\qquad |b_{i,\eps}(t)|\le c\,\omega(\eps)^{-L},
\]
for all $i=1,\dots,n$. Regularising the equation \eqref{intro_eq} means equivalently to regularise the system \eqref{system_hyp} as 
\[
D_t\left(
                             \begin{array}{c}
                               u_1 \\
                                u_2 \\
                             \end{array}
                           \right)
= \left(
    \begin{array}{cc}
      0 & \lara{D_x}\\
      \sum_{i=1}^n a_{i,\eps}(t)D_{x_i}^2\lara{D_x}^{-1}& \sum_{i=1}^n b_{i,\eps}(t)D_{x_i} \\
           \end{array}
  \right)
  \left(\begin{array}{c}
                               u_1 \\
                               
                               u_2 \\
                             \end{array}
                           \right),
                           \]
  with symbol matrix $A_{1,\eps}(t,\xi)$. Clearly, one can write $A_{1,\eps}(t,\xi)$ as $\lara{\xi}A_\eps(t,\xi)$, where
\[
A_\eps(t,\xi)=\left(
    \begin{array}{cc}
      0 & 1\\
      \sum_{i=1}^n a_{i,\eps}(t)\xi_i^2\lara{\xi}^{-2}& \sum_{i=1}^n b_{i,\eps}(t)\xi_i \lara{\xi}^{-1}\\
           \end{array}
  \right).
\]
The matrix $A_\eps(t,\xi)$ has eigenvalues
\beq
\label{eigen_eps}
\begin{split}
\lambda_{1,\eps}(t,\xi)&= \frac{1}{2}\biggl(\sum_{i=1}^n b_{i,\eps}(t)\xi_i\lara{\xi}^{-1}-\sqrt{\big(\sum_{i=1}^n b_{i,\eps}(t)\xi_i\lara{\xi}^{-1}\big)^2+4\sum_{i=1}^n a_{i,\eps}(t)\xi_i^2\lara{\xi}^{-2}}\biggl),\\
\lambda_{2,\eps}(t,\xi)&= \frac{1}{2}\biggl(\sum_{i=1}^n b_{i,\eps}(t)\xi_i\lara{\xi}^{-1}+\sqrt{\big(\sum_{i=1}^n b_{i,\eps}(t)\xi_i\lara{\xi}^{-1}\big)^2+4\sum_{i=1}^n a_{i,\eps}(t)\xi_i^2\lara{\xi}^{-2}}\biggl).
\end{split}
\eeq
Note that $\lambda_{1,\eps}\lara{\xi}$ and $\lambda_{2,\eps}\lara{\xi}$ are the roots of the characteristic polynomial 
\[
\tau^2-\sum_{i=1}^n b_{i,\eps}(t)\xi_i\tau-\sum_{i=1}^n a_{i,\eps}(t)\xi_i^2
\]
and fulfil the inequality
\[
\lambda_{1,\eps}(t,\xi)^2+\lambda_{2,\eps}(t,\xi)^2\le 2(\lambda_{1,\eps}(t,\xi)-\lambda_{2,\eps}(t,\xi))^2,
\]
employed by Kinoshita and Spagnolo in \cite{KS} to obtain Gevrey well-posedness for the corresponding Cauchy problem.

It is clear that the regularised equation \eqref{intro_eq} and the corresponding first order system have solutions $(u_\eps)_\eps$  and $(U_\eps)_\eps$, respectively, depending on the parameter $\eps\in(0,1]$. By Fourier transformation in $x$ the system 
\beq
\label{system_hyp_eps}
D_t U_\eps 
= \left(
    \begin{array}{cc}
      0 & \lara{D_x}\\
      \sum_{i=1}^n a_{i,\eps}(t)D_{x_i}^2\lara{D_x}^{-1}& \sum_{i=1}^n b_{i,\eps}(t)D_{x_i} \\
           \end{array}
  \right)
 U_\eps,
\eeq
where
\[
U_\eps=\left(
                             \begin{array}{c}
                               u_{1,\eps} \\
                                u_{2,\eps} \\
                             \end{array}
                           \right)=\left(
                             \begin{array}{c}
                               \lara{D_x}u_{\eps} \\
                                D_t u_{\eps} \\
                             \end{array}
                           \right)
\]
is transformed into
\beq
\label{eq_V_eps}
D_t V_\eps(t,\xi)=\lara{\xi}A_\eps(t,\xi)V_\eps(t,\xi),
\eeq
where $V_\eps(t,\xi)=(\mathcal{F}U_\eps(t,\cdot))(\xi)$. Finally, by regularising the initial data as well if needed (for instance in Case 3), we transform the Cauchy problem \eqref{intro_CP} into 
\[
\begin{split}
D_t V_\eps(t,\xi)&=\lara{\xi}A_\eps(t,\xi)V_\eps(t,\xi),\\
V_{0,\eps}&=(\mathcal{F}U_{\eps}(0,\cdot))(\xi)=\mathcal{F}(\lara{D_x}g_{0,\eps}, g_{1,\eps}).
\end{split}
\]
The well-posedness of this regularised Cauchy problem will be obtained by constructing a quasi-symmetriser for the matrix $A_\eps$ and the corresponding energy. Before proceeding with the technical details we recall some general basic facts. For more details see \cite{DS,KS}.

\subsection{The quasi-symmetriser: general theory}
Note that for $m\times m$ matrices $A_1$ and $A_2$ the notation $A_1\le A_2$ means $(A_1v,v)\le (A_2v,v)$ for all $v\in\C^m$ with $(\cdot,\cdot)$ the scalar product in $\C^m$.

Let $A(\lambda)$ be the $m\times m$ Sylvester matrix with real eigenvalues $\lambda_l$, i.e.,
\[
A(\lambda)=\left(
    \begin{array}{ccccc}
      0 & 1 & 0 & \dots & 0\\
      0 & 0 & 1 & \dots & 0 \\
      \dots & \dots & \dots & \dots & 1 \\
      -\sigma_m^{(m)}(\lambda) & -\sigma_{m-1}^{(m)}(\lambda) & \dots & \dots & -\sigma_1^{(m)}(\lambda) \\
    \end{array}
  \right),
\]
where
\[
\sigma_h^{(m)}(\lambda)=(-1)^h\sum_{1\le i_1<...<i_h\le m}\lambda_{i_1}...\lambda_{i_h}
\]
for all $1\le h\le m$. In the sequel we make use of the following notations: $\mathcal{P}_m$ for the class of permutations of $\{1,...,m\}$, $\lambda_\rho=(\lambda_{\rho_1},...,\lambda_{\rho_m})$ with $\lambda\in\R^m$ and $\rho\in\mathcal{P}_m$, $\pi_i\lambda=(\lambda_1,...,\lambda_{i-1},\lambda_{i+1},...,\lambda_m)$ and $\lambda'=\pi_m\lambda=(\lambda_1,...,\lambda_{m-1})$. Following Section 4 in \cite{KS} we have that the quasi-symmetriser is the Hermitian matrix
\[
Q^{(m)}_\delta(\lambda)=\sum_{\rho\in\mathcal{P}_m} P_\delta^{(m)}(\lambda_\rho)^\ast P_\delta^{(m)}(\lambda_\rho),
\]
where $\delta\in(0,1]$, $P_\delta^{(m)}(\lambda)=H^{(m)}_\delta P^{(m)}(\lambda)$, $H_\delta^{(m)}={\rm diag}\{\delta^{m-1},...,\delta,1\}$ and the matrix $P^{(m)}(\lambda)$ is defined inductively by $P^{(1)}(\lambda)=1$ and
\[
P^{(m)}(\lambda)=\left(
    \begin{array}{ccccc}
      \, & \, & \, & \, & 0\\
      \, & \, & P^{(m-1)}(\lambda') & \, & \vdots \\
      \, & \, & \, & \, & 0 \\
      \sigma_{m-1}^{(m-1)}(\lambda') & \dots & \dots & \sigma_1^{(m-1)}(\lambda') & 1 \\
    \end{array}
  \right).
\]
Note that $P^{(m)}(\lambda)$ is depending only on $\lambda'$. Finally, let $W^{(m)}_i(\lambda)$ denote the row vector
\[
\big(\sigma_{m-1}^{(m-1)}(\pi_i\lambda),...,\sigma_1^{(m-1)}(\pi_i\lambda),1\big),\quad 1\le i\le m,
\]
and let $\mathcal{W}^{(m)}(\lambda)$ be the matrix with row vectors $W^{(m)}_i$. The following proposition collects the main properties of the quasi-symmetriser $Q^{(m)}_\delta(\lambda)$. For a detailed proof we refer the reader to Propositions 1 and 2 in \cite{KS} and to Proposition 1 in \cite{DS}.
\begin{proposition}
\label{prop_qs}
\leavevmode
\begin{itemize}
\item[(i)] The quasi-symmetriser $Q_\delta^{(m)}(\lambda)$ can be written as
\[
Q_0^{(m)}(\lambda)+\delta^2 Q_1^{(m)}(\lambda)+...+\delta^{2(m-1)}Q_{m-1}^{(m)}(\lambda),
\]
where the matrices $Q^{(m)}_i(\lambda)$, $i=1,...,m-1,$ are nonnegative and Hermitian with
entries being symmetric polynomials in $\lambda_1,...,\lambda_m$.
\item[(ii)] There exists a function $C_m(\lambda)$ bounded for
bounded $|\lambda|$ such that
\[
C_m(\lambda)^{-1}\delta^{2(m-1)}I\le Q^{(m)}_\delta(\lambda)\le C_m(\lambda)I.
\]
\item[(iii)] We have
\[
|Q_\delta^{(m)}(\lambda)A(\lambda)-A(\lambda)^\ast Q_\delta^{(m)}(\lambda)|\le C_m(\lambda)\delta Q_\delta^{(m)}(\lambda).
\]
\item[(iv)] For any $(m-1)\times(m-1)$ matrix $T$ let $T^\sharp$ denote the $m\times m$ matrix
\[
\left(
    \begin{array}{cc}
    T & 0\\
    0 & 0 \\
    \end{array}
  \right).
\]
Then, $Q_\delta^{(m)}(\lambda)=Q_0^{(m)}(\lambda)+\delta^2\sum_{i=1}^m Q_\delta^{(m-1)}(\pi_i\lambda)^\sharp$.
\item[(v)] We have
\[
Q_0^{(m)}(\lambda)=(m-1)!\mathcal{W}^{(m)}(\lambda)^\ast \mathcal{W}^{(m)}(\lambda).
\]
\item[(vi)] We have
\[
\det Q_0^{(m)}(\lambda)=(m-1)!\prod_{1\le i<j\le m}(\lambda_i-\lambda_j)^2.
\]
\item[(vii)] There exists a constant $C_m$ such that
\[
q_{0,11}^{(m)}(\lambda)\cdots q_{0,mm}^{(m)}(\lambda)\le C_m\prod_{1\le i<j\le m}(\lambda^2_i+\lambda^2_j).
\]
\end{itemize}
\end{proposition}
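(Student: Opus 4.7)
The plan is to verify the seven assertions by exploiting the factorisation $P_\delta^{(m)}(\lambda) = H_\delta^{(m)} P^{(m)}(\lambda)$ together with induction on $m$, following the blueprint of Kinoshita--Spagnolo \cite{KS} and D'Ancona--Spagnolo \cite{DS}. Items (i), (iv) and (v) are purely structural and follow by direct expansion of
\[
Q_\delta^{(m)}(\lambda) = \sum_{\rho\in\mathcal{P}_m} P^{(m)}(\lambda_\rho)^* (H_\delta^{(m)})^2 P^{(m)}(\lambda_\rho).
\]
Sorting contributions by the diagonal entries $\delta^{2k}$ of $(H_\delta^{(m)})^2$ yields the polynomial expansion in (i); each $Q_i^{(m)}$ is automatically Hermitian and nonnegative, and its entries are symmetric polynomials in $\lambda_1,\ldots,\lambda_m$ because one sums over all $\rho\in\mathcal{P}_m$. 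Regrouping the permutations by their fixed last coordinate $\rho_m = i$ and using the block form of $P^{(m)}$ (whose upper-left $(m-1)\times(m-1)$ block depends only on $\pi_i\lambda$) produces the recursion in (iv). Setting $\delta = 0$ kills every contribution except the one coming from the last diagonal entry of $(H_\delta^{(m)})^2$; only the last row of $P^{(m)}(\lambda_\rho)$ survives, and that row is exactly the Kinoshita--Spagnolo vector $W^{(m)}_{\rho_m}$, which gives (v) after summing over $\rho$ with fixed $\rho_m$ (producing the factor $(m-1)!$).

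For (ii), the upper bound follows from (i) and the polynomial growth of the entries of each $Q_i^{(m)}$. For the lower bound, $P^{(m)}(\lambda)$ is lower triangular with unit diagonal, so $\det(P^{(m)*}P^{(m)})=1$ and its smallest eigenvalue is at least $\|P^{(m)}(\lambda)\|^{-2(m-1)}$, bounded below on every bounded set of $\lambda$'s; combining with $(H_\delta^{(m)})^2 \ge \delta^{2(m-1)}I$ yields the claim. Identity (vi) follows from (v): $\det Q_0^{(m)} = (m-1)!^m\,(\det\mathcal{W}^{(m)})^2$, and expansion of $\det\mathcal{W}^{(m)}$ in terms of the elementary symmetric polynomials $\sigma_h^{(m-1)}(\pi_i\lambda)$ reduces, via the Newton-type identities, to the classical Vandermonde factor $\prod_{i<j}(\lambda_i-\lambda_j)$. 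Finally, (vii) is an algebraic inequality that again uses (v): $q_{0,ii}^{(m)}$ equals $(m-1)!$ times the squared norm of the $i$-th column of $\mathcal{W}^{(m)}$, whose entries are elementary symmetric functions of the $\pi_i\lambda$, controlled by $\prod_{j\ne i}(1+\lambda_j^2)$; an AM-GM-type symmetrisation then closes the estimate.

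The main obstacle is the quasi-commutation estimate (iii). The key point is that, although $P^{(m)}(\lambda_\rho)$ does not exactly diagonalise the Sylvester matrix $A(\lambda)=A(\lambda_\rho)$ (the latter equality uses that the entries of $A$ are symmetric in $\lambda$), it does so up to an explicit error matrix $E_\rho(\lambda)$ whose nonzero rows are graded in a fashion compatible with $H_\delta^{(m)}$. One writes
\[
P_\delta^{(m)}(\lambda_\rho)A(\lambda) - \Lambda(\lambda_\rho) P_\delta^{(m)}(\lambda_\rho) = H_\delta^{(m)} E_\rho(\lambda),
\]
and a row-by-row inspection shows that each row of $E_\rho$ carries one more factor of $\delta$ than the corresponding weight of $H_\delta^{(m)}$. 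Substituting this into
\[
Q_\delta^{(m)}A - A^*Q_\delta^{(m)} = \sum_\rho P_\delta^{(m)}(\lambda_\rho)^*\bigl(P_\delta^{(m)}(\lambda_\rho)A - \Lambda(\lambda_\rho)P_\delta^{(m)}(\lambda_\rho)\bigr) + \text{adjoint},
\]
and using the lower bound in (ii) to absorb the resulting $\delta$-factor back into $Q_\delta^{(m)}$ itself, yields the bound $|Q_\delta^{(m)}A - A^*Q_\delta^{(m)}| \le C_m(\lambda)\,\delta\, Q_\delta^{(m)}$. The delicate bookkeeping in this last step is where the bulk of the work lies, since one must track simultaneously the gradation of $H_\delta^{(m)}$ in both factors of each summand.
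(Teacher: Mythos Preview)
The paper does not give a proof of this proposition at all: immediately after the statement it writes ``For a detailed proof we refer the reader to Propositions~1 and~2 in \cite{KS} and to Proposition~1 in \cite{DS}.'' Your sketch is precisely a condensed reconstruction of those arguments, so the approach is not just equivalent but identical in spirit to what the paper relies on. Your outline of (i), (ii), (iv), (v) is accurate; the determinant identity (vi) and the diagonal inequality (vii) are indeed the content of \cite[Proposition~2]{KS}, and your account of (iii) via the graded error term $E_\rho$ is exactly the mechanism in \cite[Proposition~1]{KS}. One small bookkeeping point: in (vi) you write $\det Q_0^{(m)} = ((m-1)!)^m (\det\mathcal{W}^{(m)})^2$, which is correct, but reconciling this with the stated formula $(m-1)!\prod_{i<j}(\lambda_i-\lambda_j)^2$ requires the precise evaluation $(\det\mathcal{W}^{(m)})^2 = ((m-1)!)^{1-m}\prod_{i<j}(\lambda_i-\lambda_j)^2$ rather than the raw Vandermonde; this is where the ``Newton-type identities'' you allude to actually do nontrivial work, and is handled in \cite{KS}.
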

We finally recall that a family $\{Q_\alpha\}$ of nonnegative Hermitian matrices is called \emph{nearly diagonal} if there exists a positive constant $c_0$ such that
\[
Q_\alpha\ge c_0\,{\rm diag}\,Q_\alpha
\]
for all $\alpha$, with ${\rm diag}\,Q_\alpha ={\rm diag}\{q_{\alpha,11},...,q_{\alpha, mm}\}$. The following linear algebra result is proven in \cite[Lemma 1]{KS}.
\begin{lemma}
\label{lem_old}
Let $\{Q_\alpha\}$ be a family of nonnegative Hermitian $m\times m$ matrices such that $\det Q_\alpha>0$ and
\[
\det Q_\alpha \ge c\, q_{\alpha,11}q_{\alpha,22}\cdots q_{\alpha,mm}
\]
for a certain constant $c>0$ independent of $\alpha$. Then,
\[
Q_\alpha\ge c\, m^{1-m}\,{\rm diag}\,Q_\alpha
\]
for all $\alpha$, i.e., the family $\{Q_\alpha\}$ is nearly diagonal.
\end{lemma}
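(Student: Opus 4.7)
The plan is to reduce the inequality to the diagonally-normalized case and then read off a uniform spectral lower bound from the trace--determinant interplay. Fix $\alpha$ and, to lighten notation, drop the subscript, writing $Q=Q_\alpha$ with entries $q_{ij}$. Since $Q$ is Hermitian, $Q\ge 0$ and $\det Q>0$, the matrix $Q$ is actually positive definite, so all diagonal entries satisfy $q_{ii}>0$. Introduce the positive diagonal matrix $D={\rm diag}(q_{11}^{1/2},\dots,q_{mm}^{1/2})$ and the normalized matrix $\widetilde{Q}=D^{-1}QD^{-1}$, which is again Hermitian positive definite and has all diagonal entries equal to $1$.

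By multiplicativity of the determinant, $\det\widetilde{Q}=(q_{11}\cdots q_{mm})^{-1}\det Q$, so the standing hypothesis of the lemma becomes $\det\widetilde{Q}\ge c$. Let $0<\mu_1\le\mu_2\le\cdots\le\mu_m$ denote the eigenvalues of $\widetilde{Q}$. The trace identity $\sum_{j=1}^m\mu_j={\rm tr}\,\widetilde{Q}=m$ forces $\mu_j\le m$ for every $j$, while $\mu_1\mu_2\cdots\mu_m=\det\widetilde{Q}\ge c$ then gives
\[
\mu_1\ge \frac{c}{\mu_2\cdots\mu_m}\ge \frac{c}{m^{m-1}}=c\,m^{1-m}.
\]
Because $\widetilde{Q}$ is Hermitian, this lower bound on the smallest eigenvalue is equivalent to the operator inequality $\widetilde{Q}\ge c\,m^{1-m}\,I$.

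To conclude, I would conjugate back by $D$: for every $v\in\C^m$,
\[
(Qv,v)=(\widetilde{Q}Dv,Dv)\ge c\,m^{1-m}(Dv,Dv)=c\,m^{1-m}(D^2v,v),
\]
and since $D^2={\rm diag}(q_{11},\dots,q_{mm})={\rm diag}\,Q$, this is precisely $Q\ge c\,m^{1-m}\,{\rm diag}\,Q$. As the constant $c$ is the one appearing in the hypothesis, the bound holds uniformly in $\alpha$, which is the near-diagonality statement. There is no real obstacle in this argument: once one thinks of normalizing by the diagonal, the rest is elementary linear algebra (Hadamard-type trace/determinant bookkeeping), and the only technical point to keep in mind is that $\det Q_\alpha>0$ guarantees $q_{\alpha,ii}>0$ and hence the invertibility of $D$.
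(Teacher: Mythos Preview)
Your argument is correct. The paper does not give its own proof of this lemma but simply refers to \cite[Lemma~1]{KS}; the diagonal normalization $\widetilde{Q}=D^{-1}QD^{-1}$ followed by the trace--determinant bound on the smallest eigenvalue is exactly the standard proof one finds there, so your approach coincides with the cited one.
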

Lemma \ref{lem_old} is employed to prove that the family  $Q_\delta^{(m)}(\lambda)$ of quasi-sym\-me\-tri\-sers defined above is nearly diagonal when $\lambda$ belongs to a suitable set. The following statement is proven in \cite[Proposition 3]{KS}.
\begin{proposition}
\label{prop_SM}
For any $M>0$ define the set
\[
\mathcal{S}_M=\{\lambda\in\R^m:\, \lambda_i^2+\lambda_j^2\le M (\lambda_i-\lambda_j)^2,\quad 1\le i<j\le m\}.
\]
Then the family of matrices $\{Q_\delta^{(m)}(\lambda):\, 0<\delta\le 1, \lambda\in\mathcal{S}_M\}$ is nearly diagonal.
\end{proposition}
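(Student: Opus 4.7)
The plan is to invoke Lemma \ref{lem_old}, which reduces the near-diagonality claim to establishing a determinantal inequality of the form
\[
\det Q_\delta^{(m)}(\lambda) \ge c\, q_{\delta,11}^{(m)}(\lambda)\cdots q_{\delta,mm}^{(m)}(\lambda),
\]
uniformly in $\delta\in (0,1]$ and $\lambda\in\mathcal{S}_M$, and to verify this by induction on the size $m$ of the matrix.

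The base $m=1$ is trivial since $Q_\delta^{(1)}(\lambda)=1$. To set up the inductive step, I would first handle the principal part $Q_0^{(m)}(\lambda)$ alone. Combining Proposition \ref{prop_qs}(vi), $\det Q_0^{(m)}(\lambda) = (m-1)!\prod_{i<j}(\lambda_i-\lambda_j)^2$, with Proposition \ref{prop_qs}(vii), $\prod_k q_{0,kk}^{(m)}(\lambda)\le C_m\prod_{i<j}(\lambda_i^2+\lambda_j^2)$, and the defining inequality of $\mathcal{S}_M$, namely $(\lambda_i-\lambda_j)^2 \ge M^{-1}(\lambda_i^2+\lambda_j^2)$ applied pairwise, I obtain $\det Q_0^{(m)}(\lambda) \ge c_0(m,M)\prod_k q_{0,kk}^{(m)}(\lambda)$ on $\mathcal{S}_M$. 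Lemma \ref{lem_old} then yields $Q_0^{(m)}(\lambda) \ge c'_0\,\mathrm{diag}\,Q_0^{(m)}(\lambda)$ with a constant $c'_0$ depending only on $m$ and $M$.

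Next, I would exploit the recursive decomposition of Proposition \ref{prop_qs}(iv),
\[
Q_\delta^{(m)}(\lambda) = Q_0^{(m)}(\lambda) + \delta^2 \sum_{i=1}^m Q_\delta^{(m-1)}(\pi_i\lambda)^\sharp.
\]
Since $\lambda\in\mathcal{S}_M$ implies $\pi_i\lambda\in\mathcal{S}_M\subset\R^{m-1}$ for each $i$, the inductive hypothesis furnishes a constant $c_1 = c_1(m-1,M)>0$ with $Q_\delta^{(m-1)}(\pi_i\lambda) \ge c_1\,\mathrm{diag}\,Q_\delta^{(m-1)}(\pi_i\lambda)$, and padding with zeros preserves this bound: $Q_\delta^{(m-1)}(\pi_i\lambda)^\sharp \ge c_1\,\mathrm{diag}\,Q_\delta^{(m-1)}(\pi_i\lambda)^\sharp$. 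Setting $c_*=\min(c'_0,c_1)$ and summing the two lower bounds gives
\[
Q_\delta^{(m)}(\lambda) \ge c_*\,\mathrm{diag}\,Q_0^{(m)}(\lambda) + c_*\,\delta^2\sum_{i=1}^m \mathrm{diag}\,Q_\delta^{(m-1)}(\pi_i\lambda)^\sharp = c_*\,\mathrm{diag}\,Q_\delta^{(m)}(\lambda),
\]
closing the induction.

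I do not anticipate a serious technical obstacle: the crucial structural observation is that near-diagonality is preserved both by nonnegative addition and by the $\sharp$-embedding, so the entire content of the proposition reduces to the single base estimate for $Q_0^{(m)}$ on $\mathcal{S}_M$, where the explicit formulas in (vi) and (vii) together with the pairwise constraint defining $\mathcal{S}_M$ do all the work, and Lemma \ref{lem_old} converts the determinantal bound into the required near-diagonal estimate.
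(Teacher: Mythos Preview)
Your approach is correct and is essentially the argument of Kinoshita--Spagnolo \cite{KS}, which the paper cites rather than reproving: the induction on $m$ via the recursive decomposition in Proposition~\ref{prop_qs}(iv), combined with the base estimate for $Q_0^{(m)}$ obtained from (vi), (vii) and Lemma~\ref{lem_old}, is precisely the intended route. One small point worth making explicit is that Lemma~\ref{lem_old} requires $\det Q_0^{(m)}(\lambda)>0$, which fails when some $\lambda_i=\lambda_j$; on $\mathcal{S}_M$ this forces $\lambda_i=\lambda_j=0$, and the near-diagonality of $Q_0^{(m)}$ at such degenerate points follows by continuity from the generic case (the inequality is closed and the vanishing locus is a proper subvariety of $\mathcal{S}_M$).
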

We conclude this section with a result on nearly diagonal matrices depending on three parameters (i.e. $\delta, t, \xi$)
which will be crucial in the next section. Note that this is a straightforward
extension of Lemma 2 in \cite{KS} valid for two parameter
(i.e. $\delta, t$) dependent matrices.
\begin{lemma}
\label{lem_new}
Let $\{ Q^{(m)}_\delta(t,\xi): 0<\delta\le 1, 0\le t\le T, \xi\in\R^n\}$ be a nearly diagonal family of coercive Hermitian matrices of class ${C}^k$ in $t$, $k\ge 1$. Then, there exists a constant $C_T>0$ such that for any non-zero
continuous function $V:[0,T]\times\R^n\to \C^m$
we have
\[
\int_{0}^T \frac{|(\partial_t Q^{(m)}_\delta(t,\xi)
V(t,\xi),V(t,\xi))|}{(Q^{(m)}_\delta(t,\xi)V(t,\xi),V(t,\xi))^{1-1/k}
|V(t,\xi)|^{2/k}}\, dt\le C_T
\Vert Q^{(m)}_\delta(\cdot,\xi)\Vert^{1/k}_{{C}^k([0,T])}
\]
for all $\xi\in\Rn.$
\end{lemma}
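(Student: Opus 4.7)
The plan is to reduce this three-parameter estimate directly to the two-parameter version established as Lemma 2 in \cite{KS}. For any fixed $\xi\in\R^n$, the family $\{Q^{(m)}_\delta(t,\xi):0<\delta\le1,\ 0\le t\le T\}$ is a nearly diagonal family of coercive Hermitian $m\times m$ matrices of class $C^k$ in $t$, where nearly diagonal means $Q^{(m)}_\delta(t,\xi)\ge c_0\,\mathrm{diag}\,Q^{(m)}_\delta(t,\xi)$ with the \emph{same} constant $c_0$ as for the original three-parameter family, precisely because the defining inequality holds uniformly in all parameters simultaneously. Thus for each fixed $\xi$ the hypotheses of Lemma 2 in \cite{KS} are satisfied, with the continuous function on $[0,T]$ taken to be $t\mapsto V(t,\xi)$ (if $V(t_0,\xi)=0$ at some point then the integrand is interpreted as $0$ there, since both numerator and denominator vanish simultaneously by coercivity).

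Applying the two-parameter lemma to this slice yields the pointwise-in-$\xi$ estimate
\[
\int_{0}^T \frac{|(\partial_t Q^{(m)}_\delta(t,\xi)V(t,\xi),V(t,\xi))|}{(Q^{(m)}_\delta(t,\xi)V(t,\xi),V(t,\xi))^{1-1/k}|V(t,\xi)|^{2/k}}\,dt\le C_T\,\Vert Q^{(m)}_\delta(\cdot,\xi)\Vert^{1/k}_{C^k([0,T])}.
\]
The heart of the matter is to check that the constant $C_T$ produced by the proof of Lemma 2 in \cite{KS} depends only on $T$, on the dimension $m$, and on $c_0$, but not on $\xi$ (nor on $\delta$, nor on $V$). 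Inspection of the KS argument shows this: the constant is built by combining a Glaeser-type inequality $|f'(t)|^k\le C\,f(t)^{k-1}\Vert f\Vert_{C^k}$ (applied entry by entry to $q_{ij}^{(m)}(\cdot,\xi)$) with the nearly diagonal bound $(Q^{(m)}_\delta V,V)\ge c_0\sum_i q^{(m)}_{\delta,ii}|V_i|^2$ and Cauchy--Schwarz to control the off-diagonal cross terms. None of these steps introduces any dependence on the slicing parameter $\xi$ beyond the $C^k$-norm on the right hand side, and the Glaeser constant depends only on $k$.

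The only genuine point requiring attention, and therefore the main obstacle, is exactly this uniformity in $\xi$: one must verify that the nearly diagonal property is interpreted \emph{uniformly} in $(\delta,t,\xi)$ (which is part of the hypothesis, as stated) and that the Glaeser-type inequality is applied to $t\mapsto q^{(m)}_{\delta,ii}(t,\xi)$ with $\xi$ frozen, so that only the $C^k$-norm in $t$ (and not in $\xi$) enters the bound. With these uniformities recorded, the pointwise-in-$\xi$ estimate above is the assertion of the lemma, and no further integration or supremum in $\xi$ is needed. Hence the extension from two to three parameters is indeed straightforward and the constant $C_T$ depends only on $T$, $m$ and $c_0$.
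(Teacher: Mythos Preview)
Your proposal is correct and matches the paper's own approach: the paper does not give a separate proof but remarks that the lemma is a ``straightforward extension of Lemma 2 in \cite{KS}'' to three parameters, which is exactly what you carry out by freezing $\xi$ and invoking the two-parameter result with the uniform nearly-diagonal constant $c_0$. Your observation that the constant produced in \cite{KS} depends only on $c_0$ (and $k$, $m$) is confirmed later in the paper, where it is recorded that $C_T=c_0^{-(1-1/k)}$.
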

\subsection{The quasi-symmetriser of the matrix $A_\eps$}
We now focus on the matrix $A_\eps$ corresponding to the Cauchy problem we are studying. It is clear that we will get a family of quasi-symmetrisers $(Q^{(2)}_{\delta}(\lambda_\eps))_\eps$, where $\lambda_\eps=(\lambda_{1,\eps},\lambda_{2,\eps})$. More precisely, by direct computations we get
\[
Q^{(2)}_{\delta}(\lambda_\eps)=\left(
    \begin{array}{cc}
      \lambda_{1,\eps}^2+\lambda_{2,\eps}^2 & -(\lambda_{1,\eps}+\lambda_{2,\eps})\\
      -(\lambda_{1,\eps}+\lambda_{2,\eps}) & 2 \\
           \end{array}
  \right) + 2\delta^2 \left(
    \begin{array}{cc}
      1 & 0\\
      0 & 0 \\
           \end{array}
  \right),
\]
where $\lambda_{1,\eps}$ and $\lambda_{2,\eps}$ are defined as in \eqref{eigen_eps}. Thus,
\begin{multline*}
Q^{(2)}_{\delta}(\lambda_\eps)=\left(
    \begin{array}{cc}
     \big(\sum_{i=1}^n b_{i,\eps}(t)\xi_i\big)^2\lara{\xi}^{-2}+2\sum_{i=1}^n a_{1,\eps}(t)\xi_i^2\lara{\xi}^{-2} & -\sum_{i=1}^n b_{i,\eps}(t)\xi_i\lara{\xi}^{-1}\\[0.2cm]
      -\sum_{i=1}^n b_{i,\eps}(t)\xi_i\lara{\xi}^{-1} & 2 \\
           \end{array}
  \right)\\
   + 2\delta^2 \left(
    \begin{array}{cc}
      1 & 0\\
      0 & 0 \\
           \end{array}
  \right).
\end{multline*}
Note that from the formula \eqref{eigen_eps}, $\lambda_{1,\eps}$ and $\lambda_{2,\eps}$ are nets of smooth functions fulfilling the estimate
\beq
\label{est_lambda}
|\partial_t^{(k)}\lambda_{i,\eps}(t,\xi)|\le c_k\omega(\eps)^{-L-k},
\eeq
for all $k\in\N$, for $t\in[0,1]$, $\xi\in\R^n$ and $\eps\in(0,1]$. 
Finally, since $\lambda_{1,\eps}$ and $\lambda_{2,\eps}$ in \eqref{eigen_eps} are roots of a second order equation, they fulfil
\beq
\label{KS_cond}
\lambda_{1,\eps}^2(t,\xi)+\lambda_{2,\eps}^2(t,\xi)\le 2(\lambda_{1,\eps}(t,\xi)-\lambda_{2,\eps}(t,\xi))^2,
\eeq
so the condition on the roots used in \cite{KS} and in \cite{GR:12} is trivially fulfilled with constant $M=2$ (see (6) in \cite{GR:12}).

By analysing Lemma \ref{lem_old} and Proposition \ref{prop_qs} in this particular case we get the following results on the quasi-symmetriser $Q^{(2)}_\delta(\lambda_\eps)$.
\begin{proposition}
\label{prop_qs_2}
Let $Q^{(2)}_\delta(\lambda_\eps)$ as defined above. Then,
\beq
\label{est_diag_below}
(Q^{(2)}_\delta(\lambda_\eps)V,V)\ge \frac{1}{8} (Q^{(2)}_{\delta,\Delta}(\lambda_\eps)V,V),
\eeq
where $Q^{(2)}_{\delta,\Delta}(\lambda_\eps)$ is the diagonal part of the matrix $Q^{(2)}_\delta(\lambda_\eps)$. In addition, there exists a constant $C_2>0$ such that
\begin{itemize}
\item[(i)]  $C_2^{-1}\omega(\eps)^{2L}\delta^2 I\le Q^{(2)}_\delta(\lambda_\eps(t,\xi))\le C_2\omega(\eps)^{-2L} I$,\\
\item[(ii)] $|((Q^{(2)}_\delta(\lambda_\eps)A_\eps(t,\xi)-A_\eps(t,\xi)^\ast Q^{(2)}_\delta(\lambda_\eps))V,V)|\le C_2\delta(Q^{(2)}_\delta(\lambda_\eps)V,V)$,\\
\end{itemize}
for all $\delta>0$, $\eps\in(0,1]$, $t\in[0,T]$, $\xi\in\R^n$ and $V\in\C^2$.  
\end{proposition}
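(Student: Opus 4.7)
The plan is to derive Proposition \ref{prop_qs_2} by specialising the general theory from Proposition \ref{prop_qs}, Lemma \ref{lem_old} and Proposition \ref{prop_SM} to the case $m=2$, using (a) the Kinoshita--Spagnolo inequality \eqref{KS_cond} for the roots $\lambda_{i,\eps}$ and (b) the explicit $\eps$-independent $2\times 2$ computations available in this dimension. The near-diagonal estimate \eqref{est_diag_below} is obtained first, and then used in both (i) and (ii).

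For the near-diagonal estimate I would start from the explicit expression of $Q^{(2)}_\delta(\lambda_\eps)$. Since $\det Q^{(2)}_0(\lambda)=(\lambda_1-\lambda_2)^2$ by Proposition \ref{prop_qs}(vi) while $q^{(2)}_{0,11}q^{(2)}_{0,22}=2(\lambda_1^2+\lambda_2^2)$, the condition \eqref{KS_cond} (which places $\lambda_\eps\in\mathcal{S}_2$ with $M=2$) yields $\det Q^{(2)}_0(\lambda_\eps)\ge \tfrac14\, q^{(2)}_{0,11}q^{(2)}_{0,22}$. Lemma \ref{lem_old} with $m=2$ then gives $Q^{(2)}_0(\lambda_\eps)\ge\tfrac18\,\mathrm{diag}\,Q^{(2)}_0(\lambda_\eps)$, and adding back the diagonal correction $2\delta^2 E_{11}$ (which sits trivially on both sides with the right constant) extends the bound to $Q^{(2)}_\delta(\lambda_\eps)\ge\tfrac18\,\mathrm{diag}\,Q^{(2)}_\delta(\lambda_\eps)$. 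This step is routine and makes the constant $1/8$ explicit.

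For (i) I would combine Proposition \ref{prop_qs}(ii) with the structure-theorem bounds $|a_{i,\eps}(t)|, |b_{i,\eps}(t)|\le c\,\omega(\eps)^{-L}$. From the explicit formulas \eqref{eigen_eps} one reads off $|\lambda_{i,\eps}(t,\xi)|\le c\,\omega(\eps)^{-L}$ (after enlarging $L$ to absorb the square root). Inserting this into the function $C_m(\lambda)$ of Proposition \ref{prop_qs}(ii), which is locally bounded in $\lambda$, one obtains $C_m(\lambda_\eps(t,\xi))\le C_2\,\omega(\eps)^{-2L}$, producing the two-sided bound in (i) with the advertised power of $\omega(\eps)$.

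The part requiring the most attention is (ii), since the generic constant in Proposition \ref{prop_qs}(iii) depends on $|\lambda_\eps|$ and would blow up as $\omega(\eps)\to 0$. The key observation is that in dimension two the matrix $A_\eps(t,\xi)$ is \emph{exactly} the Sylvester matrix of $\lambda_\eps(t,\xi)$, because $\lambda_{1,\eps}+\lambda_{2,\eps}=\sum_i b_{i,\eps}\xi_i\langle\xi\rangle^{-1}$ and $\lambda_{1,\eps}\lambda_{2,\eps}=-\sum_i a_{i,\eps}\xi_i^2\langle\xi\rangle^{-2}$. A direct $2\times 2$ calculation then gives the miraculous identity
\[
Q^{(2)}_\delta(\lambda_\eps)A_\eps-A_\eps^{\ast}Q^{(2)}_\delta(\lambda_\eps)=2\delta^{2}\begin{pmatrix} 0 & 1 \\ -1 & 0 \end{pmatrix},
\]
in which the dependence on $\lambda_\eps$ cancels. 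Hence $|((Q^{(2)}_\delta(\lambda_\eps)A_\eps-A_\eps^{\ast}Q^{(2)}_\delta(\lambda_\eps))V,V)|\le 4\delta^{2}|V_1||V_2|$. Using the near-diagonal estimate already proved, $(Q^{(2)}_\delta(\lambda_\eps)V,V)\ge \tfrac{\delta^{2}}{4}|V_1|^{2}+\tfrac{1}{4}|V_2|^{2}$, together with the elementary inequality $2\delta|V_1||V_2|\le \delta^{2}|V_1|^{2}+|V_2|^{2}$, yields the desired bound with an $\eps$-independent constant $C_2$. I expect this explicit cancellation to be the main technical point; the rest of the argument is bookkeeping based on the general framework recalled earlier in the section.
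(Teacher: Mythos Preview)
Your approach is essentially the same as the paper's, especially for (ii): the paper also computes the commutator explicitly as $2\delta^2\bigl(\begin{smallmatrix}0&1\\-1&0\end{smallmatrix}\bigr)$, notes the cancellation of the $\lambda_\eps$-dependence, and then combines $2\delta\cdot 2\delta|V_1||V_2|\le 2\delta(\delta^2|V_1|^2+|V_2|^2)$ with the near-diagonal bound. Two small points where your route diverges from the paper's are worth flagging.

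For the near-diagonal estimate, the paper applies Lemma~\ref{lem_old} directly to $Q_\delta^{(2)}$, computing $\det Q_\delta^{(2)}(\lambda_\eps)=(\lambda_{1,\eps}-\lambda_{2,\eps})^2+4\delta^2\ge\tfrac14\,q_{\delta,11}q_{\delta,22}$. Your version works first with $Q_0^{(2)}$ and then adds the $2\delta^2 E_{11}$ correction; this is fine morally, but Lemma~\ref{lem_old} formally requires $\det Q_\alpha>0$, which fails for $Q_0^{(2)}$ at coinciding roots. Either pass to the limit or, more simply, work with $Q_\delta^{(2)}$ from the start as the paper does.

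For (i) there is a genuine (if easily repairable) gap in your argument. Proposition~\ref{prop_qs}(ii) only says $C_m(\lambda)$ is \emph{bounded for bounded} $|\lambda|$; it does not assert polynomial growth, so you cannot conclude $C_m(\lambda_\eps)\le C_2\,\omega(\eps)^{-2L}$ from local boundedness alone. The paper does not invoke Proposition~\ref{prop_qs}(ii) here; instead it writes $(Q^{(2)}_\delta(\lambda_\eps)V,V)=|\lambda_{1,\eps}V_1-V_2|^2+|\lambda_{2,\eps}V_1-V_2|^2+2\delta^2|V_1|^2$ and argues by cases on whether $|V_1|^2\gtrless\gamma\,\omega(\eps)^{2L}|V_2|^2$. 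Alternatively, you could simply read off from the explicit $2\times 2$ matrix that its operator norm is $O(1+|\lambda_\eps|^2)$ for the upper bound, and for the lower bound use your already-proved near-diagonal estimate: $(Q^{(2)}_\delta V,V)\ge\tfrac18\bigl((\lambda_{1,\eps}^2+\lambda_{2,\eps}^2+2\delta^2)|V_1|^2+2|V_2|^2\bigr)\ge\tfrac{\delta^2}{4}|V|^2$, which in fact gives a bound without the $\omega(\eps)^{2L}$ loss.
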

\begin{proof}
By direct computations and by \eqref{KS_cond} we have that
\begin{multline*}
\det Q^{(2)}_\delta(\lambda_\eps)=(\lambda_1-\lambda_2)^2+4\delta^2\ge \frac{1}{2}(\lambda_1(t,\xi)^2+\lambda_2(t,\xi)^2)+4\delta^2\\
\ge\frac{2}{4}(\lambda_1(t,\xi)^2+\lambda_2(t,\xi)^2+2\delta^2)
=\frac{1}{4}q_{\delta,11}(\lambda_\eps)q_{\delta,22}(\lambda_\eps).
\end{multline*}
Note that the estimate below is uniform in $\eps$ and $\delta$. Hence, Lemma \ref{lem_old} yields
\[
(Q^{(2)}_\delta(\lambda_\eps)V,V)\ge \frac{1}{8} (Q^{(2)}_{\delta,\Delta}(\lambda_\eps)V,V).
\]
We pass now to prove assertion (i). We have that
\begin{multline*}
(Q^{(2)}_{\delta}(\lambda_\eps)V,V)=(\lambda_{1,\eps}^2+\lambda_{2,\eps}^2)|V_1|^2-2(\lambda_{1,\eps}+\lambda_{2,\eps})\Re (\overline{V_1}V_2)+2\delta^2 |V_1|^2+ 2|V_2|^2\\
=|\lambda_{1,\eps}V_1-V_2|^2+|\lambda_{2,\eps}V_1-V_2|^2+2\delta^2|V_1|^2.
\end{multline*}
It follows that if $|V_1|^2\ge \gamma\omega(\eps)^{2L}|V_2|^2$, with $0\le\gamma\le1$ we have that
\beq
\label{est_below_1}
(Q^{(2)}_{\delta}(\lambda_\eps)V,V)\ge 2\delta^2|V_1|^2=\delta^2(|V_1|^2+|V_1|^2)\ge C^{-1}_2\omega(\eps)^{2L}\delta^2(|V_1|^2+|V_2|^2).
\eeq
On the other hand, recalling from \eqref{est_lambda} that $|\lambda_{i,\eps}(t,\xi)|^2\omega(\eps)^{2L}\le c$ uniformly in variables and parameter for $i=1,2$,  if $|V_1|^2\le \gamma\omega(\eps)^{2L}|V_2|^2$,  we can write
\begin{multline*}
(Q^{(2)}_{\delta}(\lambda_\eps)V,V)=|\lambda_{1,\eps}V_1-V_2|^2+|\lambda_{2,\eps}V_1-V_2|^2+2\delta^2|V_1|^2\\
\ge \frac{1}{2}|V_2|^2-\lambda_{1,\eps}^2|V_1|^2+\frac{1}{2}|V_2|^2-\lambda_{2,\eps}^2|V_1|^2+2\delta^2|V_1|^2\\
\ge |V_2|^2-(\lambda_{1,\eps}^2+\lambda_{2,\eps}^2)\gamma\omega(\eps)^{2L}|V_2|^2+2\delta^2|V_1|^2\\
\ge (|V_2|^2-c\gamma|V_2|^2)+2\delta^2|V_1|^2.
\end{multline*}
So, choosing $\gamma$ sufficiently small and for $C_2$ big enough we have that 
\beq
\label{est_below_2}
(Q^{(2)}_{\delta}(\lambda_\eps)V,V)\ge C^{-1}_2\delta^2(|V_1|^2+|V_2|^2).
\eeq
Combining \eqref{est_below_1} with \eqref{est_below_2} we conclude that
\beq
\label{quasi-sym-below}
(Q^{(2)}_{\delta}(\lambda_\eps)V,V)\ge C^{-1}_2\omega(\eps)^{2L}\delta^2(|V_1|^2+|V_2|^2),
\eeq
for all $V\in\C^2$. Finally, from \eqref{est_lambda} we have that 
\[
(Q^{(2)}_{\delta}(\lambda_\eps)V,V)\le C_2\omega(\eps)^{-2L}(|V_1|^2+|V_2|^2),
\]
proving in this way that assertion (i) holds.

We now want to prove assertion (ii). We begin by computing $Q^{(2)}_\delta(\lambda_\eps)A_\eps(t,\xi)-A_\eps(t,\xi)^\ast Q^{(2)}_\delta(\lambda_\eps)$. We get
\[
Q^{(2)}_\delta(\lambda_\eps)A_\eps(t,\xi)-A_\eps(t,\xi)^\ast Q^{(2)}_\delta(\lambda_\eps)=\left(
    \begin{array}{cc}
    0 & 2\delta^2\\
    -2\delta^2 & 0 \\
    \end{array}
  \right)
\]
and therefore
\[
((Q^{(2)}_\delta(\lambda_\eps)A_\eps(t,\xi)-A_\eps(t,\xi)^\ast Q^{(2)}_\delta(\lambda_\eps))V,V)=2\delta^2(V_2\overline{V_1}-V_1\overline{V_2})=4i\delta^2\Im \overline{V_1}V_2.
\]
This means that $((Q^{(2)}_\delta(\lambda_\eps)A_\eps(t,\xi)-A_\eps(t,\xi)^\ast Q^{(2)}_\delta(\lambda_\eps))V,V)$ does not depend on the eigenvalues $\lambda_\eps=(\lambda_{1,\eps},\lambda_{2,\eps})$, or in other words, by replacing $Q^{(2)}_\delta(\lambda_\eps)$ with $Q^{(2)}_{\delta,\Delta}(\lambda_\eps)$ we can preliminary prove
\begin{multline*}
|((Q^{(2)}_\delta(\lambda_\eps)A_\eps(t,\xi)-A_\eps(t,\xi)^\ast Q^{(2)}_\delta(\lambda_\eps))V,V)|\\
=|((Q^{(2)}_{\delta,\Delta}(\lambda_\eps)A_\eps(t,\xi)-A_\eps(t,\xi)^\ast Q^{(2)}_{\delta,\Delta}(\lambda_\eps))V,V)|
\le 2\delta (Q^{(2)}_{\delta,\Delta}(\lambda_\eps)V,V).
\end{multline*}
This is easily done. Indeed,
\[
|((Q^{(2)}_\delta(\lambda_\eps)A_\eps(t,\xi)-A_\eps(t,\xi)^\ast Q^{(2)}_\delta(\lambda_\eps))V,V)|\le 2\delta 2\delta|V_1||V_2|
\]
and,  
\[
(Q^{(2)}_{\delta,\Delta}(\lambda_\eps)V,V)\ge 2\delta^2|V_1|^2+ 2|V_2|^2.
\] 
It follows that
\[
2\delta 2\delta|V_1||V_2|\le 2\delta(\delta^2|V_1|^2+|V_2|^2),
\]
thus
\beq
\label{est_ii_1}
|((Q^{(2)}_{\delta}(\lambda_\eps)A_\eps(t,\xi)-A_\eps(t,\xi)^\ast Q^{(2)}_\delta(\lambda_\eps))V,V)|\\
\le \delta (Q^{(2)}_{\delta,\Delta}(\lambda_\eps)V,V).
\eeq
The proof of assertion (ii)  is completed by combining \eqref{est_ii_1} with \eqref{est_diag_below}.

\end{proof}
Note that
\[
\frac{1}{8}(Q^{(2)}_{\delta,\Delta}(\lambda_\eps)V,V)\le (Q^{(2)}_{\delta}(\lambda_\eps)V,V)\le 2 (Q^{(2)}_{\delta,\Delta}(\lambda_\eps)V,V).
\]
Indeed,
\begin{multline*}
(Q^{(2)}_{\delta}(\lambda_\eps)V,V)=(\lambda_{1,\eps}^2+\lambda_{2,\eps}^2)|V_1|^2-2(\lambda_{1,\eps}+\lambda_{2,\eps})\Re (\overline{V_1}V_2)+2\delta^2 |V_1|^2+ 2|V_2|^2\\
\le 2(\lambda_{1,\eps}^2|V_1|^2+|V_2|^2)+2(\lambda_{2,\eps}^2|V_1|^2+|V_2|^2)+2\delta^2|V_1|^2\\
\le 2(\lambda_{1,\eps}^2+\lambda_{2,\eps}^2+2\delta^2)|V_1|^2+4|V_2|^2
=2 (Q^{(2)}_{\delta,\Delta}(\lambda_\eps)V,V).
\end{multline*}

Adopting the notations of \cite{KS} we then have that the bound from below (19) in \cite{KS} is fulfilled with $c_0=\frac{1}{8}$. This means that the family of matrices
\begin{multline*}
\{ Q^{(2)}_{\delta,\eps}(t,\xi):=
Q^{(2)}_\delta(\lambda_\eps),\\ \lambda_\eps(t,\xi)=(\lambda_{1,\eps}(t,\xi),\lambda_{2,\eps}(t,\xi)),\, t\in[0,T],\, \xi\in\R^n,\, \delta\in(0,1],\, \eps\in(0,1]\}
\end{multline*}
is nearly diagonal.

A careful analysis of the proof of Lemma 2 in \cite{KS} allows us to extend Lemma \ref{lem_new} to the family of quasi-symmetrisers $(Q_\delta^{(2)}(\lambda_\eps))_\eps$.  The constant $C_T=c_0^{-(1-1/k)}$  in Lemma 2 is in our case equal to $(1/8)^{-(1-1/k)}$.
\begin{lemma}
\label{lem_new_2}
Let $\{ Q^{(2)}_{\delta,\eps}(t,\xi): 0<\delta\le1, 0<\eps\le 1, 0\le t\le T, \xi\in\R^n\}$ be  the nearly diagonal family of quasi-symmetrisers introduced above. Then, for any
continuous function $V:[0,T]\times\R^n\to \C^2$, $V\neq 0$,
we have
\[
\int_{0}^T \frac{|(\partial_t Q^{(2)}_{\delta,\eps}(t,\xi)
V(t,\xi),V(t,\xi))|}{(Q^{(2)}_{\delta,\eps}(t,\xi)V(t,\xi),V(t,\xi))^{1-1/k}
|V(t,\xi)|^{2/k}}\, dt\le C_T
\Vert Q^{(2)}_{\delta,\eps}(\cdot,\xi)\Vert^{1/k}_{{C}^k([0,T])}
\]
for all $\xi\in\R^n$, $\delta\in(0,1]$ and $\eps\in(0,1]$.
\end{lemma}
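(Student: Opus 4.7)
The plan is to follow the proof of Lemma~2 in \cite{KS} essentially verbatim, once one observes that the only new ingredient required --- namely, that the family $\{Q^{(2)}_{\delta,\eps}(t,\xi)\}$ is nearly diagonal \emph{uniformly} in the extra parameter $\eps\in(0,1]$ --- has already been established in \eqref{est_diag_below} with explicit constant $c_0=1/8$ that is independent of $\delta,\eps,t,\xi$. Because every step in the \cite{KS} argument uses only $c_0$ and the $C^k$ norm of the matrix entries in $t$ (both of which behave well for each fixed $\eps$), the estimate transfers directly and uniformly.

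Concretely, I would first record the Hermitian decomposition
\[
(\partial_t Q^{(2)}_{\delta,\eps}(t,\xi)V,V)=\sum_{i=1}^{2}(\partial_t q_{ii})|V_i|^2+2\Re\bigl(\partial_t q_{12}\, V_1\overline{V_2}\bigr),
\]
where $q_{ij}=q_{ij,\delta,\eps}(t,\xi)$ denote the entries of $Q^{(2)}_{\delta,\eps}$. The analytic core is the Glaeser-type inequality for nonnegative $C^k$ functions $f\colon[0,T]\to[0,\infty)$,
\[
|f'(t)|\le C_k\,\|f\|_{C^k([0,T])}^{1/k}\,f(t)^{1-1/k},
\]
applied to the nonnegative diagonal entries $q_{ii}(\cdot,\xi)$. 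For the off-diagonal entry one uses the pointwise Cauchy--Schwarz bound $|q_{12}|^2\le q_{11}q_{22}$ that holds for every nonnegative Hermitian matrix, together with the analogous $C^k$--bound on $q_{11}q_{22}$, to estimate $|\partial_t q_{12}|$ in terms of $(q_{11}q_{22})^{(k-1)/(2k)}$ and $\|Q^{(2)}_{\delta,\eps}(\cdot,\xi)\|_{C^k}^{1/k}$.

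Combining these bounds with the nearly diagonal estimate
\[
(Q^{(2)}_{\delta,\eps}(t,\xi)V,V)\ge \tfrac{1}{8}\bigl(q_{11}(t,\xi)|V_1|^2+q_{22}(t,\xi)|V_2|^2\bigr),
\]
and using the trivial inequality $|V_i|\le|V|$ to absorb the factor $|V_i|^{2/k}/|V|^{2/k}$, the integrand in the statement is pointwise bounded (in $t$) by a constant multiple of $\|Q^{(2)}_{\delta,\eps}(\cdot,\xi)\|_{C^k([0,T])}^{1/k}$ with constant $(1/8)^{-(1-1/k)}$ arising from raising the near-diagonality constant to the exponent $1-1/k$. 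Integration over $[0,T]$ then gives the claimed bound with $C_T=T\,(1/8)^{-(1-1/k)}$ (up to a $k$-dependent factor absorbed into $C_T$), uniformly in $\delta,\eps\in(0,1]$ and $\xi\in\R^n$.

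I do not expect any real obstacle: the proof is structurally identical to that of Lemma~2 in \cite{KS}, and the new parameter $\eps$ is inert because neither the near-diagonality constant $c_0=1/8$ nor the Glaeser inequality depends on $\eps$. The only point that needs to be verified carefully is that no constant silently depending on $\eps$ sneaks in through the Glaeser step; this is immediate since that inequality only sees the $C^k$ norm of each fixed slice $q_{ii,\delta,\eps}(\cdot,\xi)$ in~$t$, which appears explicitly on the right-hand side of the statement.
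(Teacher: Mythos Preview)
Your overall approach matches the paper's: both observe that Lemma~2 of \cite{KS} carries over verbatim once one notes that the near-diagonality constant $c_0=1/8$ from \eqref{est_diag_below} is independent of the additional parameter $\eps$, yielding $C_T=(1/8)^{-(1-1/k)}$.

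However, your reconstruction of the \cite{KS} argument contains a genuine error. The pointwise inequality
\[
|f'(t)|\le C_k\,\|f\|_{C^k}^{1/k}\,f(t)^{1-1/k}
\]
for nonnegative $f\in C^k$ is \emph{false} for $k>2$: take $f(t)=t^2$ near $t=0$ with $k=3$, so that $f'(t)=2t$ while $f(t)^{2/3}=t^{4/3}=o(t)$ as $t\to0$. Only the case $k=2$ (classical Glaeser) yields such a pointwise bound. Consequently the integrand in the lemma is \emph{not} pointwise bounded as you claim, and integrating a pointwise estimate cannot produce the result. What the proof of Lemma~2 in \cite{KS} actually relies on is the \emph{integral} inequality
\[
\int_0^T |f'(t)|\,f(t)^{-1+1/k}\,dt\le C_{k,T}\,\|f\|_{C^k([0,T])}^{1/k},
\]
valid for every nonnegative $f\in C^k([0,T])$ (essentially the statement that $f^{1/k}$ has total variation on $[0,T]$ controlled by $\|f\|_{C^k}^{1/k}$). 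Applying this integral bound to the diagonal entries $q_{ii}$, handling the off-diagonal contribution through $|q_{12}|^2\le q_{11}q_{22}$, and then invoking the near-diagonality with $c_0=1/8$ gives the lemma. Your high-level strategy and your identification of the key uniformity in $\eps$ are correct; only this one analytic step must be replaced by its integral form.
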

We are now ready to prove the well-posedness of the Cauchy problem \eqref{intro_CP}. This will consist of two parts: 
\begin{enumerate}
\item choice of the framework,
\item  energy estimates.
\end{enumerate}
We begin by considering {\bf Case 1}: {\bf distributional coefficients and Gevrey initial data}

\section{Case 1: well-posedness for Gevrey initial data }
\label{sec_case1}
We want to prove the well-posedness of the Cauchy problem \eqref{intro_CP} when the coefficients of the equation are distributions with compact support and the initial data are compactly supported Gevrey functions. This will be achieved in a suitable algebra of Colombeau type containing the usual Gevrey classes as subalgebras. We start by developing these objects.

\subsection{Gevrey-moderate families}

We begin by investigating the convolution of a compactly supported Gevrey function with a mollifier $\varphi\in\S(\R^n)$ with $\int\varphi(x)\, dx=1$ and $\int x^\alpha\varphi(x)\, dx=0$ for all $\alpha\neq 0$ and $\varphi_\eps(x):=\eps^{-n}\varphi(x/\eps)$. The following holds:
\begin{proposition}
\label{prop_reg_gevrey_sim}
Let $\sigma> 1$. Let $u\in \gamma_c^\sigma(\R^n)$ and let $\varphi$ be a mollifier as above. Then
\begin{itemize}
\item[(i)] there exists $c>0$ such that 
\[
|\partial^\alpha(u\ast\varphi_\eps)(x)|\le c^{|\alpha|+1}(\alpha!)^\sigma
\]
for all $\alpha\in\N^n$, $x\in\R^n$ and $\eps\in(0,1]$;
\item[(ii)] there exists $c>0$ and for all $q\in\N$ a constant $c_q>0$ such that
\[
|\partial^\alpha(u\ast\varphi_\eps-u)(x)|\le c_q c^{|\alpha|+1}(\alpha!)^\sigma \eps^q,
\]
for all $\alpha\in\N^n$, $x\in\R^n$ and $\eps\in(0,1]$;
\item[(iii)] there exist $c, c'>0$ such that 
\[
|\widehat {u\ast\varphi_\eps}(\xi)|\le c'\,\esp^{-c\lara{\xi}^{\frac{1}{\sigma}}},
\]
for all $\xi\in\R^n$ and $\eps\in(0,1]$.
\end{itemize}
\end{proposition}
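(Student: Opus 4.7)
The plan for part (i) is immediate: because $u\in\gamma^\sigma_c(\R^n)$ has compact support, there is a single constant $c_0>0$ (depending on $u$) with $\sup_{x\in\R^n}|\partial^\beta u(x)|\le c_0^{|\beta|+1}(\beta!)^\sigma$ for all $\beta$. Since $\partial^\alpha(u\ast\varphi_\eps)=(\partial^\alpha u)\ast\varphi_\eps$, Young's inequality gives
\[
|\partial^\alpha(u\ast\varphi_\eps)(x)|\le \|\partial^\alpha u\|_{L^\infty}\|\varphi_\eps\|_{L^1}=\|\partial^\alpha u\|_{L^\infty}\|\varphi\|_{L^1},
\]
which is bounded by $c^{|\alpha|+1}(\alpha!)^\sigma$ with $c=c_0\|\varphi\|_{L^1}+c_0$, uniformly in $\eps\in(0,1]$.

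For part (ii), the crucial ingredient is the vanishing–moment hypothesis $\int z^\beta\varphi(z)\,dz=0$ for all $\beta\ne 0$. I would apply the part (ii) argument to $v:=\partial^\alpha u$, which again lies in $\gamma^\sigma_c(\R^n)$, with Gevrey bounds $\|\partial^\beta v\|_{L^\infty}\le c_0^{|\alpha|+|\beta|+1}((\alpha+\beta)!)^\sigma\le c_0^{|\alpha|+|\beta|+1}2^{\sigma|\alpha+\beta|}(\alpha!)^\sigma(\beta!)^\sigma$, obtained from $(\alpha+\beta)!\le 2^{|\alpha+\beta|}\alpha!\,\beta!$. Writing
\[
(v\ast\varphi_\eps-v)(x)=\int(v(x-\eps z)-v(x))\varphi(z)\,dz
\]
and expanding $v(x-\eps z)$ by Taylor's formula to order $q$, the intermediate terms $\sum_{1\le|\beta|<q}\frac{(-\eps z)^\beta}{\beta!}\partial^\beta v(x)$ integrate to $0$ by the moment assumption, and only the integral remainder survives. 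Using the Gevrey bound on $\partial^\beta v$ for $|\beta|=q$, together with $\int|z|^q|\varphi(z)|\,dz<\infty$ (since $\varphi\in\S(\R^n)$), one obtains a bound of the form
\[
|\partial^\alpha(u\ast\varphi_\eps-u)(x)|\le C_q\,c_0^{|\alpha|+q+1}2^{\sigma(|\alpha|+q)}(\alpha!)^\sigma(q!)^{\sigma-1}\,\eps^q,
\]
which, after absorbing $q$-dependent factors into $c_q$ and $\alpha$-dependent exponential factors into a new constant $c:=2^\sigma c_0$, is the asserted estimate.

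For part (iii), I would invoke the Paley–Wiener-type characterization of compactly supported Gevrey functions (see \cite{GR:11}): $u\in\gamma^\sigma_c(\R^n)$ with $\sigma>1$ if and only if there exist $c,c'>0$ such that $|\widehat u(\xi)|\le c'\,\esp^{-c\lara{\xi}^{1/\sigma}}$. Then since $\widehat{u\ast\varphi_\eps}(\xi)=\widehat u(\xi)\widehat\varphi(\eps\xi)$ and $\widehat\varphi\in\S(\R^n)\subset L^\infty(\R^n)$, one immediately gets
\[
|\widehat{u\ast\varphi_\eps}(\xi)|\le \|\widehat\varphi\|_{L^\infty}\,c'\,\esp^{-c\lara{\xi}^{1/\sigma}},
\]
uniformly in $\eps\in(0,1]$, proving (iii).

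The main obstacle I anticipate is bookkeeping in (ii): keeping the constant $c$ independent of $q$ (so that the shape $c^{|\alpha|+1}(\alpha!)^\sigma$ is preserved across all $q$) while letting only $c_q$ absorb the $q$-dependent factors $(q!)^{\sigma-1}$ and $2^{\sigma q}$. The inequality $(\alpha+\beta)!\le 2^{|\alpha+\beta|}\alpha!\,\beta!$ and the separation of $\alpha$- and $q$-dependences in the Taylor remainder are the technical points that make this split possible.
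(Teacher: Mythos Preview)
Your proposal is correct and follows essentially the same route as the paper: part (i) via $\partial^\alpha(u\ast\varphi_\eps)=(\partial^\alpha u)\ast\varphi_\eps$ and an $L^1$--$L^\infty$ bound, part (ii) via Taylor expansion of $\partial^\alpha u(x-\eps z)$ combined with the vanishing-moment property and the factorial inequality $(\alpha+\beta)!\le 2^{|\alpha+\beta|}\alpha!\,\beta!$, and part (iii) via $\widehat{u\ast\varphi_\eps}=\widehat u\cdot\widehat\varphi(\eps\,\cdot)$ together with the Fourier characterisation of $\gamma^\sigma_c$. The only cosmetic difference is that the paper Taylor-expands to order $q+1$ (obtaining $\eps^{q+1}$, hence $\eps^q$), whereas you expand to order $q$ directly.
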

\begin{proof}
\leavevmode
\begin{itemize}
\item[(i)] By convolution with the mollifier $\varphi_\eps$ and straightforward estimates we obtain
\[
|\partial^\alpha (u\ast\varphi_\eps)(x)|=|\partial^\alpha u\ast\varphi_\eps(x)|\le \int_{\R^n}|\partial^\alpha u(x-\eps z)||\varphi(z)|\, dz\le c^{|\alpha|+1}(\alpha!)^\sigma,
\]
for all $\alpha\in\N^n$, $x\in\R^n$ and $\eps\in(0,1]$.
\item[(ii)] Analogously, by Taylor expansion and the properties of the mollifier $\varphi$ (in particular since $\int x^\alpha\varphi(x)\, dx=0$ for all $\alpha\neq 0$) we get for any $q\in\N$ the following estimate:
\begin{multline*}
|\partial^\alpha(u\ast\varphi_\eps-u)(x)|=|(\partial^\alpha u\ast\varphi_\eps-\partial^\alpha u)(x)|=\biggr|\int_{\R^n} (\partial^\alpha u(x-\eps z)-\partial^\alpha u(x))\varphi(z)\, dz\biggl|\\
=\biggr|\int_{\R^n}\sum_{|\beta|=q+1}\frac{\partial^{\alpha+\beta} u(x-\eps\theta z)}{\beta!}(\eps z)^\beta \varphi(z)\, dz\biggl|\le \int_{\R^n}\sum_{|\beta|=q+1}\frac{|\partial^{\alpha+\beta} u(x-\eps\theta z)|}{\beta!}\eps^{q+1}|z^\beta \varphi(z)|\, dz\\
\le \eps^{q+1} c^{|\alpha|+q+2}\sum_{|\beta|=q+1}\frac{((\alpha+\beta)!)^\sigma}{\beta!}\int_{\R^n}\sum_{|\beta|=q+1}|z^\beta \varphi(z)|\, dz\\
\le \eps^{q+1} c^{|\alpha|+q+2}\sum_{|\beta|=q+1}\frac{2^{\sigma|\alpha|+\sigma|\beta|}(\alpha!)^\sigma(\beta!)^\sigma}{\beta!}\int_{\R^n}\sum_{|\beta|=q+1}|z^\beta \varphi(z)|\, dz
\le c_q {\wt{c}}^{\,|\alpha|+1}(\alpha!)^\sigma \eps^q.
\end{multline*}
Note that the estimate above holds for all $\alpha\in\N^n$ and $q\in\N$ uniformly in $x\in\R^n$ and $\eps\in(0,1]$.
\item[(iii)] By Fourier transform we get that
\[
\widehat {u\ast\varphi_\eps}(\xi)=\widehat{u}(\xi)\widehat{\varphi_\eps}(\xi)=\widehat{u}(\xi)\widehat{\varphi}(\eps\xi)
\]
and therefore since $u\in \gamma_c^\sigma(\R^n)$ and $\varphi\in\S(\R^n)$ the third assertion is trivial.
\end{itemize}
\end{proof}
In Definition \ref{def_mod_intro} we introduced the notion of a moderate net, i.e.,  a net of functions $(f_\eps)_\eps\in \gamma^\sigma(\R^n)^{(0,1]}$ is $\gamma^s$-moderate if for all $K\Subset\R^n$ there exists a constant $c_K>0$ and there exists $N\in\N$ such that 
\[
|\partial^\alpha f_\eps(x)|\le c_K^{|\alpha|+1}(\alpha !)^\sigma \eps^{-N-|\alpha|},
\]
for all $\alpha\in\N^n$, $x\in K$ and $\eps\in(0,1]$. 

Analogously one can talk of $\gamma^\sigma$-negligible nets.
\begin{definition}
\label{def_negl_net}

Let $\sigma\ge 1$. We say that $(u_\eps)_\eps$ is \emph{$\gamma^\sigma$-negligible} if for all $K\Subset\R^n$ and for all $q\in\N$ there exists a constant $c_{q,K}>0$ such that 
\[
|\partial^\alpha u_\eps(x)|\le c_{q,K}^{|\alpha|+1}(\alpha !)^\sigma \eps^{q-|\alpha|},
\]
for all $\alpha\in\N^n$, $x\in K$ and $\eps\in(0,1]$.
\end{definition}
We can now prove the following proposition.
\begin{proposition}
\label{prop_reg_gevrey_mod}
\leavevmode
\begin{itemize}
\item[(i)] If $(u_\eps)_\eps$ is $\gamma^\sigma$-moderate and there exists $K\Subset\R^n$ such that $\supp\, u_\eps\subseteq K$ for all $\eps\in(0,1]$ then there exist $c,c'>0$ and $N\in\N$ such that
\beq
\label{star1}
|\widehat{u_\eps}(\xi)|\le c'\eps^{-N}\esp^{-c\eps^{\frac{1}{\sigma}}\lara{\xi}^{\frac{1}{\sigma}}},
\eeq
for all $\xi\in\R^n$.
\item[(ii)] If $(u_\eps)_\eps$ is $\gamma^\sigma$-negligible and there exists $K\Subset\R^n$ such that $\supp\, u_\eps\subseteq K$ for all $\eps\in(0,1]$ then there exists $c>0$ and for all $q>0$ there exists $c_q>0$  such that
\beq
\label{star2}
|\widehat{u_\eps}(\xi)|\le c_q\eps^{q}\esp^{-c\eps^{\frac{1}{\sigma}}\lara{\xi}^{\frac{1}{\sigma}}},
\eeq
for all $\xi\in\R^n$.

\item[(iii)] If $(u_\eps)_\eps$ is a net of tempered distributions with $(\widehat{u_\eps})_\eps$ satisfying \eqref{star1} then $(u_\eps)_\eps$ is $\gamma^s$-moderate.
\item[(iv)] If $(u_\eps)_\eps$ is a net of tempered distributions with $(\widehat{u_\eps})_\eps$ satisfying \eqref{star2} then $(u_\eps)_\eps$ is $\gamma^s$-negligible.
\end{itemize}
\end{proposition}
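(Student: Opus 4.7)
The proposition has a symmetric structure: (i) and (ii) convert pointwise Gevrey-type derivative bounds on $u_\eps$ (with uniformly compact support) into subexponential decay of $\widehat{u_\eps}$, while (iii) and (iv) go the opposite direction via the inverse Fourier transform. I would treat the pairs together since they share the same optimisation device.

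For (i), the plan is to fix $k\in\N$, integrate by parts to obtain
\[
\lara{\xi}^{2k}|\widehat{u_\eps}(\xi)| \le \sum_{|\alpha|\le 2k}\binom{2k}{\alpha}\|\partial^\alpha u_\eps\|_{L^1(\R^n)},
\]
and then use the $\gamma^\sigma$-moderateness together with $\supp u_\eps\subseteq K$ to bound the right-hand side by $C^{k+1}(k!)^\sigma\eps^{-N-2k}$. Rewriting this as $|\widehat{u_\eps}(\xi)|\le C\eps^{-N}\,c^{k}(k!)^\sigma(\eps\lara{\xi})^{-2k}$ and minimising over $k$ via the standard choice $k_\ast\sim(\eps\lara{\xi})^{1/\sigma}$ (using Stirling $(k!)^\sigma\le C_1^{k+1}k^{\sigma k}$) yields the claimed bound $|\widehat{u_\eps}(\xi)|\le c'\eps^{-N}\exp(-c\eps^{1/\sigma}\lara{\xi}^{1/\sigma})$. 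For (ii) the argument is identical with the $\gamma^\sigma$-negligibility estimate replacing $\eps^{-N-|\alpha|}$ by $\eps^{q-|\alpha|}$; since $q$ is chosen first and the optimal $k_\ast$ depends only on $\eps$ and $\lara{\xi}$, the extra $\eps^q$ is harvested directly.

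For the converse direction (iii)--(iv), I would use the inverse Fourier formula $u_\eps(x)=(2\pi)^{-n}\int e^{ix\cdot\xi}\widehat{u_\eps}(\xi)\,d\xi$, which makes sense because \eqref{star1}, resp.\ \eqref{star2}, gives absolute integrability. Differentiation under the integral yields $|\partial^\alpha u_\eps(x)|\le(2\pi)^{-n}\int|\xi|^{|\alpha|}|\widehat{u_\eps}(\xi)|\,d\xi$. The key pointwise majorant, obtained from the elementary inequality $t^M\le M!(2/c)^M e^{ct/2}$ applied with $M=\sigma|\alpha|$, $t=\lara{\xi}^{1/\sigma}$, together with the Stirling identity $(\sigma M)!\le C_\sigma^{M+1}(M!)^\sigma$, is
\[
|\xi|^{|\alpha|}\le \widetilde C^{|\alpha|+1}(\alpha!)^\sigma\eps^{-|\alpha|}\exp\bigl(\tfrac{c}{2}\eps^{1/\sigma}\lara{\xi}^{1/\sigma}\bigr).
\]
Plugging this in absorbs half of the Fourier-side decay, leaving the convergent integral $\int\exp(-\tfrac{c}{2}\eps^{1/\sigma}\lara{\xi}^{1/\sigma})\,d\xi=O(\eps^{-n})$ (seen by the substitution $\eta=\eps\xi$). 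One obtains $|\partial^\alpha u_\eps(x)|\le \widetilde C^{|\alpha|+1}(\alpha!)^\sigma\eps^{-N-n-|\alpha|}$ in case (iii) and $\le c_q\widetilde C^{|\alpha|+1}(\alpha!)^\sigma\eps^{q-n-|\alpha|}$ in case (iv); both give the required $\gamma^\sigma$-moderateness, respectively $\gamma^\sigma$-negligibility, where the harmless shift $q\mapsto q-n$ in (iv) is absorbed into the arbitrariness of $q$.

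The only nontrivial step is the optimisation in $k$ that trades polynomial derivative control for subexponential Fourier decay, together with its dual majorant that reverses the passage. Everything else is multi-index bookkeeping, Stirling, and one absolutely convergent integral. The main subtlety to watch during the write-up is the propagation of the regularising parameter: the $\eps^{-k}$ from moderateness must combine with $\lara{\xi}^{-k}$ from Fourier decay precisely into the scale $\eps^{1/\sigma}\lara{\xi}^{1/\sigma}$, and similarly for the reverse direction. No genuinely new ideas beyond classical Gevrey--Fourier techniques are needed.
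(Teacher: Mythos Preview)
Your proposal is correct and follows the same classical Gevrey--Fourier mechanism as the paper. The only real difference is in (i)--(ii): where the paper raises the bound $\lara{\xi}^{M}|\widehat{u_\eps}(\xi)|\le C^{M+1}(M!)^\sigma\eps^{-M-N}$ to the power $1/\sigma$ and \emph{sums} over $M$ to recognise the exponential series, you instead \emph{optimise} over the integer $k$. These two devices are well-known to be equivalent, and neither buys anything the other does not. For (iii)--(iv) your argument is essentially identical to the paper's (split the exponential, rescale the remaining integral by $\eta=\eps\xi$, bound the supremum pointwise).

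Two small slips to fix when you write it up. First, in (i) the derivative order is $|\alpha|\le 2k$, so the factorial that appears is $((2k)!)^\sigma$ rather than $(k!)^\sigma$; this is harmless since $((2k)!)^\sigma\le C^{k}(k!)^{2\sigma}$ and the optimisation still yields the scale $\eps^{1/\sigma}\lara{\xi}^{1/\sigma}$ (alternatively, drop the ``$2k$'' and work with a single exponent $M$ as the paper does). Second, in (iii) your choice $t=\lara{\xi}^{1/\sigma}$ does not by itself produce the $\eps^{-|\alpha|}$ factor you claim; you need $t=\eps^{1/\sigma}\lara{\xi}^{1/\sigma}$ (the displayed majorant you write down is nonetheless the correct one). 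Also note that $M=\sigma|\alpha|$ is in general not an integer; either take $M=\lceil\sigma|\alpha|\rceil$ or, as the paper does, bound $\sup_{\xi}|\xi^\alpha|\mathrm{e}^{-\frac{c}{2}\eps^{1/\sigma}|\xi|^{1/\sigma}}$ directly.
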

\begin{proof}
(i) By elementary properties of the Fourier transform and since $\supp\, u_\eps\subseteq K$ for all $\eps$ we have that
\beq
\label{est_a}
|\xi^\alpha \widehat{u_\eps}(\xi)|=|\mathcal{F}({D^\alpha(u_\eps)})(\xi)|\le \int_K|\partial^\alpha u_\eps(x)|\, dx\le C^{|\alpha|+1}(\alpha !)^\sigma \eps^{-|\alpha|-N},
\eeq
for all $\alpha\in\N^n$ and $\xi\in\R^n$. Let us now write $\lara{\xi}^{2M}|\widehat{u_\eps}(\xi)|^2$ as
\[
\sum_{k\le M}\binom{M}{k}|\xi|^{2k}|\widehat{u_\eps}(\xi)|^2=\sum_{k\le M}\binom{M}{k}\sum_{|\alpha|\le k}c_\alpha\xi^{2\alpha}|\widehat{u_\eps}(\xi)|^2,
\]
where $c_\alpha>0$. Hence, from \eqref{est_a} we have
\[
|\xi^{\alpha} \widehat{u_\eps}(\xi)|^2\le C^{2|\alpha|+2}(\alpha !)^{2\sigma}\eps^{-2|\alpha|-2N}
\]
and, therefore, from $\alpha!\le |\alpha|^{|\alpha|}$, we conclude
\[
\lara{\xi}^{2M}|\widehat{u_\eps}(\xi)|^2\le c_M C^{2M+2}M^{2\sigma M}\eps^{-2M-2N}.
\]
It is clear that this last estimate implies
\[
\lara{\xi}^{M}|\widehat{u_\eps}(\xi)|\le c'_M C^{M+1}M^{\sigma M}\eps^{-M-N},
\]
for all $M\in\N$, uniformly in $\eps\in(0,1]$ and $\xi\in\R^n$. Note that by direct computations on the binomial coefficients  one can see that the constant $c'_M$ is of the type $C'^{M}$ so
\[
\lara{\xi}^{M}|\widehat{u_\eps}(\xi)|\le C^{M+1}M^{\sigma M}\eps^{-M-N}\le C^{M+1}\esp^{\sigma M}(M!)^\sigma\eps^{-M-N},
\]
for some suitable constant $C>0$. It follows that
\[
\lara{\xi}^{\frac{M}{\sigma}}|\widehat{u_\eps}(\xi)|^{\frac{1}{\sigma}}\le C^{\frac{M}{\sigma}+\frac{1}{\sigma}}\esp^M M!\, \eps^{-\frac{M}{\sigma}-\frac{N}{\sigma}}\le 2^{-M} C^{\frac{M}{\sigma}+\frac{1}{\sigma}}2^M\esp^M M!\, \eps^{-\frac{M}{\sigma}-\frac{N}{\sigma}},
\]
and therefore introducing a suitable constant $\nu>0$ (depending on $\sigma$) we have that
\[
\sum_M |\widehat{u_\eps}(\xi)|^{\frac{1}{\sigma}}\biggl( \nu\lara{\xi}^{\frac{1}{\sigma}}\eps^{\frac{1}{\sigma}} \biggr)^M\frac{1}{M!}\le \sum_M 2^{-M} \eps^{-\frac{N}{\sigma}},
\]
for all $\xi\in\R^n$ and $\eps>0$. 
Concluding, recognising the Taylor series of an exponential in the previous formula, we arrive at
\[
|\widehat{u_\eps}(\xi)|\le c'\,\esp^{-c\lara{\xi}^{\frac{1}{\sigma}}\eps^{\frac{1}{\sigma}}}\eps^{-N},
\]
for a suitable constants $c,c'>0$ as desired.

(ii) The proof in (i) can be repeated for a $\gamma^\sigma_c$-negligible net $(u_\eps)_\eps$. From the assumption of negligibility it is immediate to see that the estimate
\[
|\widehat{u_\eps}(\xi)|\le c_q\eps^{q}\esp^{-c\eps^{\frac{1}{\sigma}}\lara{\xi}^{\frac{1}{\sigma}}},
\]
holds uniformly in $\xi$ and $\eps$.

(iii) If $(u_\eps)_\eps$ is a net of tempered distributions satisfying (i) then by the Fourier characterisation of Gevrey functions $(u_\eps)_\eps$ is a net of Gevrey functions of order $\sigma$. More precisely, 
\begin{multline}
\label{est_b}
|\partial^\alpha u_\eps(x)|=|\partial^\alpha\mathcal{F}^{-1}(\widehat{u_\eps})(x)|\le c\eps^{-N}\int_{\R^n}|\xi^\alpha|\esp^{-c\eps^{\frac{1}{\sigma}}\lara{\xi}^{\frac{1}{\sigma}}}\, d\xi\\
 =c\eps^{-N}\int_{\R^n}\esp^{-\frac{c}{2}\eps^{\frac{1}{\sigma}}\lara{\xi}^{\frac{1}{\sigma}}}d\xi\biggl(\sup_{\xi\in\R^n}|\xi^\alpha|\esp^{-\frac{c}{2}\eps^{\frac{1}{\sigma}}\lara{\xi}^{\frac{1}{\sigma}}}\biggr)\\
\le c\eps^{-N}\eps^{-n}\int_{\R^n}\esp^{-\frac{c}{2}|\xi|^{\frac{1}{\sigma}}}d\xi
\biggl(\sup_{\xi\in\R^n}|\xi^\alpha|\esp^{-\frac{c}{2}\eps^{\frac{1}{\sigma}}|\xi|^{\frac{1}{\sigma}}}\biggr)
\le c'\eps^{-N-n}\sup_{\xi\in\R^n}|\xi^\alpha|\esp^{-\frac{c}{2}\eps^{\frac{1}{\sigma}}\lara{\xi}^{\frac{1}{\sigma}}}.
\end{multline}
Clearly, 
\[
\sup_{|\xi|\le 1}|\xi^\alpha|\esp^{-\frac{c}{2}\eps^{\frac{1}{\sigma}}\lara{\xi}^{\frac{1}{\sigma}}}\le 1.
\]
Assume now that $|\xi|\ge 1$. Hence 
\[
\sup_{|\xi|\ge 1}|\xi^\alpha|\esp^{-\frac{c}{2}\eps^{\frac{1}{\sigma}}\lara{\xi}^{\frac{1}{\sigma}}}\le \sup_{|\xi|\ge 1}|\xi^\alpha|\esp^{-\frac{c}{2}\eps^{\frac{1}{\sigma}}{|\xi|}^{\frac{1}{\sigma}}}.
\]
Note that there exists a constant $c_\sigma>0$ such that
\begin{multline}
\label{est_c}
|\xi^\alpha|\esp^{-\frac{c}{2}\eps^{\frac{1}{\sigma}}|\xi|^{\frac{1}{\sigma}}}=\eps^{-|\alpha|}|\eps\xi|^{\frac{|\alpha|\sigma}{\sigma}}\esp^{-\frac{c}{2}\eps^{\frac{1}{\sigma}}|\xi|^{\frac{1}{\sigma}}}
=\eps^{-|\alpha|}\biggl(|\eps\xi|^{\frac{|\alpha|}{\sigma}}  \esp^{-\frac{c}{2\sigma}|\eps\xi|^{\frac{1}{\sigma}}}\biggr)^\sigma\\
=\eps^{-|\alpha|}\biggl(\biggl(|\eps\xi|^{\frac{1}{\sigma}} \frac{c}{2\sigma}\biggr)^{|\alpha|} \esp^{-\frac{c}{2\sigma}|\eps\xi|^{\frac{1}{\sigma}}}\biggr)^\sigma\biggl(\frac{c}{2\sigma}\biggr)^{-|\alpha|\sigma}\le \eps^{-|\alpha|}\biggl(\frac{c}{2\sigma}\biggr)^{-|\alpha|\sigma}(|\alpha|!)^\sigma\\
\le\eps^{-|\alpha|}\biggl(\frac{c}{2\sigma}\biggr)^{-|\alpha|\sigma} n^{|\alpha|\sigma}(\alpha!)^\sigma\le \eps^{-|\alpha|}c_\sigma^{|\alpha|}(\alpha !)^\sigma.
\end{multline}
Finally combining \eqref{est_b} with \eqref{est_c} we conclude that there exists a constant $C>0$ such that  
\[
|\partial^\alpha u_\eps(x)|\le C^{|\alpha|+1}(\alpha !)^\sigma \eps^{-|\alpha|}\eps^{-N-n},
\]
for all $x\in\R^n$ and $\eps\in(0,1]$.

(iv) If $(u_\eps)_\eps$ is a net of tempered distributions satisfying $(ii)$ then by calculations analogous to the ones above (replacing $-N$ with $q$) we have that
\[
|\partial^\alpha u_\eps(x)|\le C_q^{|\alpha|+1}(\alpha!)^\sigma \eps^{q-n},
\]
for all $\eps\in(0,1]$ and $x\in\R^n$.
\end{proof}


Making use of the previous definitions of $\gamma^\sigma$-moderate and negligible net (see Definition \eqref{def_negl_net} and the paragraph above),we introduce the quotient space
\[
\G^\sigma(\R^n):=\frac{\gamma^\sigma-\text{moderate\, nets}}{\gamma^\sigma-\text{negligible\, nets}}.
\]
We now investigate the relationship between $\G^\sigma(\R^n)$ and the classical Colombeau algebra 
\[
\G(\R^n)=\frac{\E_M(\R^n)}{\Neg(\R^n)}=\frac{C^\infty-\text{moderate\, nets}}{C^\infty-\text{negligible\, nets}}.
\]
We recall that a net $(u_\eps)_\eps$ is $C^\infty$-moderate is for all $K\Subset\R^n$ and all $\alpha\in\N^n$ there exist $c>0$ and $N\in\N$ such that
\beq
\label{mod_C_inf}
|\partial^\alpha u_\eps(x)|\le c\eps^{-N},
\eeq
for all $x\in K$ and $\eps\in(0,1]$. 
A net $(u_\eps)_\eps$ is $C^\infty$-negligible is for all $K\Subset\R^n$, all $\alpha\in\N^n$ and all $q\in\N$ there exists $c>0$ such that
\beq
\label{neg_C_inf}
|\partial^\alpha u_\eps(x)|\le c\eps^{q},
\eeq
uniformly in $x\in K$ and $\eps\in(0,1]$. 
For the general analysis of $\G(\R^n)$ we refer to e.g. Oberguggenberger
\cite{Oberguggenberger:Bk-1992}.

\begin{proposition}
\label{prop_Colombeau_embedd}
For all $\sigma\ge1$, 
\[
\G^\sigma(\R^n)\subseteq \G(\R^n).
\]
\end{proposition}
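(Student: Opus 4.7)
The plan is straightforward, essentially unpacking the two moderateness/negligibility estimates term-by-term. Concretely, the canonical map between quotient algebras is induced by the identity on representatives, so two inclusions of net classes need to be verified: (a) every $\gamma^\sigma$-moderate net is $C^\infty$-moderate, and (b) every $\gamma^\sigma$-negligible net is $C^\infty$-negligible. Once both are in place, negligible nets go to negligible nets, so the map descends to the quotient and the inclusion $\G^\sigma(\R^n)\subseteq \G(\R^n)$ is well defined.

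For (a): fix $K\Subset \R^n$ and $\alpha\in\N^n$. By $\gamma^\sigma$-moderateness there exist $c_K>0$ and $N\in\N$ (depending on $K$ but not on $\alpha$) with
\[
|\partial^\alpha u_\eps(x)|\le c_K^{|\alpha|+1}(\alpha!)^\sigma \eps^{-N-|\alpha|},\qquad x\in K,\ \eps\in(0,1].
\]
Since $\alpha$ is fixed, the prefactor $c_K^{|\alpha|+1}(\alpha!)^\sigma$ is a constant $c=c(\alpha,K)>0$ and $\eps^{-N-|\alpha|}$ is of the form $\eps^{-N'}$ with $N'=N+|\alpha|\in\N$. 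This is exactly the $C^\infty$-moderateness estimate \eqref{mod_C_inf}.

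For (b): fix $K\Subset \R^n$, $\alpha\in\N^n$ and $q\in\N$. Apply the $\gamma^\sigma$-negligibility estimate from Definition \ref{def_negl_net} with $q'=q+|\alpha|$ in place of $q$ to obtain a constant $c_{q',K}>0$ such that
\[
|\partial^\alpha u_\eps(x)|\le c_{q',K}^{|\alpha|+1}(\alpha!)^\sigma \eps^{q'-|\alpha|}=\bigl(c_{q',K}^{|\alpha|+1}(\alpha!)^\sigma\bigr)\,\eps^{q}
\]
for $x\in K$ and $\eps\in(0,1]$. The bracketed constant depends only on $\alpha,q,K$, so this is \eqref{neg_C_inf}.

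I expect no serious obstacle: both inclusions boil down to observing that $|\alpha|$ is fixed once $\alpha$ is fixed, so the extra $\eps^{-|\alpha|}$ or $\eps^{|\alpha|}$ factors can be absorbed into the polynomial in $\eps$ on the right-hand side, and the factorial/Gevrey prefactor becomes a constant. The only mildly subtle point, should one wish to upgrade the inclusion to an actual embedding (injectivity of the induced map on quotients), would be the converse implication that a $\gamma^\sigma$-moderate net which is $C^\infty$-negligible is already $\gamma^\sigma$-negligible; this would require an interpolation argument in the spirit of Landau--Kolmogorov inequalities between the polynomial decay in $\eps$ and the Gevrey growth in $\alpha$. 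However, this is not needed for the set-theoretic inclusion asserted by the proposition.
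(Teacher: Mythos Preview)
Your parts (a) and (b) are correct and match the paper's treatment of the two ``easy'' implications. The gap is in your final paragraph: the injectivity step you dismiss as optional is in fact required. The objects $\G^\sigma(\R^n)$ and $\G(\R^n)$ are quotient spaces by \emph{different} ideals, so they are not literally subsets of one another; the symbol $\subseteq$ in the proposition means precisely that the induced map on quotients is injective. Parts (a) and (b) only give a well-defined map, not an inclusion. You must show that a $\gamma^\sigma$-moderate net which is $C^\infty$-negligible is already $\gamma^\sigma$-negligible, and the paper does exactly this.

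Moreover, the argument is far simpler than the Landau--Kolmogorov interpolation you anticipate. One just multiplies the moderate estimate by the negligible estimate: if $(u_\eps)_\eps$ is $\gamma^\sigma$-moderate and $C^\infty$-negligible, then for each $K\Subset\R^n$, $\alpha\in\N^n$ and $q\in\N$,
\[
|\partial^\alpha u_\eps(x)|^2 \le \bigl(c_K^{|\alpha|+1}(\alpha!)^\sigma \eps^{-N-|\alpha|}\bigr)\bigl(c_{K,q}\,\eps^{q}\bigr).
\]
Choosing $q=2q'+N$ and taking square roots yields
\[
|\partial^\alpha u_\eps(x)|\le c_{K,q'}^{\,|\alpha|+1}(\alpha!)^\sigma \eps^{q'-|\alpha|},
\]
which is the $\gamma^\sigma$-negligibility estimate. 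No derivative interpolation is needed because the $C^\infty$-negligible bound already holds for every $\alpha$ separately.
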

\begin{proof}
To prove that $\G^\sigma(\R^n)$ is a subalgebra of $\G(\R^n)$ we need to prove that $\gamma^\sigma$-moderate and $\gamma^\sigma$-negligible nets are elements of $\E_M(\R^n)$ and $\Neg(\R^n)$, respectively and that if a $\gamma^\sigma$-moderate net belongs to $\Neg(\R^n)$ then it is automatically $\gamma^\sigma$-negligible. The first two implications are clear from the definition of $\gamma^\sigma$-moderate and $\gamma^\sigma$-negligible net. Finally, if $(u_\eps)_\eps$ is $\gamma^\sigma$-moderate and belongs to $\Neg(\R^n)$ then for all $K\Subset\R^n$ we have
\[
|\partial^\alpha u_\eps(x)|^2=|\partial^\alpha u_\eps(x)||\partial^\alpha u_\eps(x)|\le c_K^{|\alpha|+1}(\alpha!)^\sigma \eps^{-N-|\alpha|}c_{K,q}\eps^q.
\]
Choosing $q=2q'+N$ and by simple estimates we get
\[
|\partial^\alpha u_\eps(x)|^2\le c_{K,q'}^{2|\alpha|+2}(\alpha!)^{2\sigma}\eps^{2q'-2|\alpha|},
\]
which implies that the net $(u_\eps)_\eps$ is $\gamma^\sigma$-negligible.
\end{proof}
The quotient space $\G^\sigma(\R^n)$ is a sheaf. This means that one can introduce a notion of restriction and a notion of support. More precisely, $x\in \R^n\setminus \supp\, u$ if there exists an open neighbourhood $V$ of $x$ such that $u|_{V}=0$ in $\G^\sigma(V)$.
Define $\G^\sigma_c(\R^n)$ as the algebra of compactly supported generalised functions in $\G^\sigma(\R^n)$. Making use of the previous arguments on $\gamma^\sigma$-moderate and -negligible nets we can prove the following proposition.
\begin{proposition}
\label{prop_34}
\leavevmode
\begin{itemize}
\item[(i)]
If $u\in \G^\sigma(\R^n)$ has compact support then it has a representative $(u_\eps)_\eps$ and a compact set $K$ such that $\supp\, u_\eps\subseteq K$ uniformly in $\eps$.  
\item[(ii)] $\gamma^\sigma_c(\R^n)$ is a subalgebra of $\G^\sigma_c(\R^n)$. 
\end{itemize} 
\end{proposition}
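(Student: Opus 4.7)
For (i), the plan is to replace an arbitrary representative $(\tilde u_\eps)_\eps$ of $u$ by a cutoff of it. Since $u$ has compact support, choose an open neighbourhood $U$ of $\supp u$ with compact closure, and exploit the fact that $\gamma^\sigma_c(\R^n)$ contains nontrivial cutoffs (for $\sigma>1$) to pick $\chi\in\gamma^\sigma_c(\R^n)$ with $\chi\equiv 1$ on a neighbourhood of $\supp u$ and $\supp\chi\subseteq K$ for some compact $K\supseteq U$. Set $u_\eps:=\chi\,\tilde u_\eps$. Then $\supp u_\eps\subseteq K$ uniformly in $\eps$, and the Leibniz rule together with the Gevrey bounds on $\chi$ and the $\gamma^\sigma$-moderateness of $(\tilde u_\eps)_\eps$ immediately yield that $(u_\eps)_\eps$ is $\gamma^\sigma$-moderate.

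The delicate step is to check that $u_\eps-\tilde u_\eps=-(1-\chi)\tilde u_\eps$ is $\gamma^\sigma$-negligible, which is what ensures $(u_\eps)_\eps$ still represents $u$. Given any compact $L\Subset\R^n$, the set $L':=L\cap\supp(1-\chi)$ is compact and disjoint from $\supp u$. Embed $L'$ into a relatively compact open set $W$ with $\overline W\subseteq\R^n\setminus\supp u$; by the definition of $\supp u$ in the sheaf $\G^\sigma$, the restriction $(\tilde u_\eps|_W)_\eps$ is $\gamma^\sigma$-negligible, hence so is $(\tilde u_\eps)_\eps$ on the compact set $L'$. Combining this with the Gevrey estimates for $1-\chi$ through the Leibniz formula gives, on $L$, an estimate of the required negligibility type for every order $q$. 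Points of $L$ where $\chi\equiv 1$ contribute zero, so the bound extends to all of $L$.

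For (ii), I would use the \emph{constant embedding}: to $f\in\gamma^\sigma_c(\R^n)$ associate the class of the constant net $(f)_\eps$. By definition of $\gamma^\sigma_c(\R^n)$, this net is $\gamma^\sigma$-moderate (with $N=0$), so it defines an element of $\G^\sigma(\R^n)$; and its support in $\G^\sigma$ equals $\supp f$, so the element lies in $\G^\sigma_c(\R^n)$ thanks to part (i). The map is an algebra homomorphism since componentwise operations commute with the quotient. Injectivity is immediate: if $(f)_\eps$ is $\gamma^\sigma$-negligible then, taking $\alpha=0$ and any $q\ge 1$ in Definition \ref{def_negl_net}, $|f(x)|\le c_{q,K}\eps^{q}$ uniformly on every $K\Subset\R^n$, forcing $f\equiv 0$. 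Consistency of this embedding with the more standard mollifier-based one is guaranteed by Proposition \ref{prop_reg_gevrey_sim}(ii), which tells us that $f\ast\varphi_\eps-f$ is $\gamma^\sigma$-negligible.

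The main obstacle I anticipate is purely the bookkeeping in (i): the sheaf axiom only gives $\gamma^\sigma$-negligibility of $(\tilde u_\eps)_\eps$ on relatively compact open subsets of $\R^n\setminus\supp u$, and one has to carefully patch this local information with the Leibniz expansion of $(1-\chi)\tilde u_\eps$ in order to obtain a clean global negligibility estimate on an arbitrary test compact $L$. Once this is done, (ii) is essentially a formal check.
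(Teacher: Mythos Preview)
Your argument is correct, but it differs from the paper's in both parts. For (i), the paper bypasses the sheaf-patching you anticipate as the main obstacle: instead of showing directly that $(1-\chi)\tilde u_\eps$ is $\gamma^\sigma$-negligible, it first invokes the classical Colombeau fact that for $u\in\G(\R^n)$ compactly supported one has $\psi u=u$ in $\G(\R^n)$ for a suitable cutoff $\psi$, so $(\psi u_\eps-u_\eps)_\eps$ is already $C^\infty$-negligible; then, since this net is also $\gamma^\sigma$-moderate, the upgrade established in the proof of Proposition~\ref{prop_Colombeau_embedd} (a $\gamma^\sigma$-moderate net that is $C^\infty$-negligible is automatically $\gamma^\sigma$-negligible) closes the argument in one line. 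Your direct route stays entirely within the $\G^\sigma$ framework and is self-contained, at the cost of the local-to-global bookkeeping you describe; the paper's route is shorter but leans on the embedding $\G^\sigma\subseteq\G$. Incidentally, your insistence on $\chi\in\gamma^\sigma_c$ rather than merely $C^\infty_c$ is the right choice for the Leibniz estimate to produce a genuine $\gamma^\sigma$-moderate bound. For (ii), you use the constant embedding $f\mapsto[(f)_\eps]$ while the paper uses the mollifier embedding $f\mapsto[(f\ast\varphi_\eps)_\eps]$; as you yourself observe, Proposition~\ref{prop_reg_gevrey_sim}(ii) makes these two coincide in $\G^\sigma$, so either choice establishes the claim.
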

\begin{proof}
(i) We begin by recalling that if $u\in \G(\R^n)$ has compact support then it has a representative $(u_\eps)_\eps$ with $\supp\, u_\eps$ contained in a compact set $K$ uniformly with respect to $\eps$. In other words, there exists $\psi\in C^\infty_c(\R^n)$ identically one on a neighbourhood of $\supp\, u$ such that $\psi u=u$ in $\G(\R^n)$. It follows that if $u\in \G^\sigma(\R^n)$ has compact support then $\psi u=u$ in $\G^\sigma(\R^n)$. Indeed,
\[
|\partial^\alpha(\psi u_\eps)(x)|\le \sum_{\alpha'\le\alpha}\binom{\alpha}{\alpha'}|\partial^{\alpha'}\psi(x)||\partial^{\alpha-\alpha'}u_\eps(x)|\le c_\psi c_\psi^{|\alpha|+1}(\alpha !)^\sigma\eps^{-N-|\alpha|}.
\]
This means that $(\psi u_\eps)_\eps$ is $\gamma^\sigma_c$-moderate. Since $\psi u_\eps-u_\eps$ is $\gamma^\sigma$-moderate and belongs to $\Neg(\R^n)$ as well, we conclude that $(\psi u_\eps-u_\eps)_\eps$ is $\gamma^\sigma$-negligible. 
 
(ii) The inclusion $\gamma^\sigma_c(\R^n)\subseteq \G^\sigma_c(\R^n)$ is a straightforward consequence of the fact that if $u\in \gamma^\sigma_c(\R^n)$ then $(u-u\ast\varphi_\eps)_\eps$ is $\gamma^\sigma$-negligible by Proposition \ref{prop_reg_gevrey_sim} and $\supp [(u\ast\varphi_\eps)_\eps]=\supp\, u$.
\end{proof}
An analogous version of Proposition \ref{prop_34} can be proven for $\G^\sigma(\R^n)$ and $\gamma^\sigma(\R^n)$, but it goes beyond the purpose of this paper.

In this paper we will also make use of the following factor space.
\begin{definition}
\label{def_sol_space}
Let $(u_\eps(t,x))_\eps\in C^\infty([0,T]; \gamma^\sigma(\R^n))$. We say that the net $(u_\eps)_\eps$ is $C^\infty([0,T]; \gamma^\sigma(\R^n))$-moderate if for all $K\Subset\R^n$ there exist $N\in\N$, $c>0$ and, for all $k\in\N$ there exist $N_k>0$ and $c_k>0$ such that
\[
|\partial_t^k\partial^\alpha_x u_\eps(t,x)|\le c_k\eps^{-N_k} c^{|\alpha|+1}(\alpha !)^\sigma \eps^{-N-|\alpha|},
\]
for all $\alpha\in\N^n$, for all $t\in[0,T]$, $x\in K$ and $\eps\in(0,1]$.

We say that the net $(u_\eps)_\eps$ is $C^\infty([0,T]; \gamma^\sigma(\R^n))$-negligible if for all $K\Subset\R^n$, for all $k\in \N$ and for all $q\in\N$ there exists $c>0$ such that 
\[
|\partial_t^k\partial^\alpha_x u_\eps(t,x)|\le c^{|\alpha|+1}(\alpha!)^\sigma \eps^{q-|\alpha|},
\]
for all $\alpha\in\N^n$, for all $t\in[0,T]$, $x\in K$ and $\eps\in(0,1]$.

We denote the quotient space of $C^\infty([0,T]; \gamma^\sigma(\R^n))$-moderate nets with respect to $C^\infty([0,T]; \gamma^\sigma(\R^n))$-negligible nets by
\[
\G([0,T];\G^\sigma(\R^n)).
\]
\end{definition}
Note that the estimates in Definition \ref{def_sol_space} express the usual Colombeau properties in $t$ and the new Gevrey-Colombeau  features in $x$ and that
\[
\G^\sigma(\R^n)\subseteq \G([0,T];\G^\sigma(\R^n))\subseteq \G([0,T]\times\R^n).
\]
Moreover, in $\G([0,T];\G^\sigma(\R^n))$ one can make use, at the level of representatives, of the characterisations by Fourier transform seen above (uniformly in $t\in[0,T]$).

\subsection{Energy estimate and well-posedness}
\label{subsec_energy}
Let us define the energy
\[
E_{\delta,\eps}(t,\xi):=(Q^{(2)}_{\delta,\eps}(t,\xi) V(t,\xi), V(t,\xi)).
\]
We have
\begin{multline*}
\partial_t E_{\delta,\eps}(t,\xi)=(\partial_tQ^{(2)}_{\delta,\eps} V,V)+ i(Q^{(2)}_{\delta,\eps} D_tV,V)-i(Q^{(2)}_{\delta,\eps} V,D_tV)\\
=(\partial_tQ^{(2)}_{\delta,\eps} V,V)+i(Q^{(2)}_{\delta,\eps}A_{1,\eps}V,V)-i(Q^{(2)}_{\delta,\eps} V,A_{1,\eps}V)\\
=(\partial_tQ^{(2)}_{\delta,\eps} V,V)+i\lara{\xi}((Q^{(2)}_{\delta,\eps} A_\eps-A_\eps^\ast Q^{(2)}_{\delta,\eps})V,V).\\
\end{multline*}
It follows that
\begin{multline}
\label{EE}
\partial_t E_{\delta,\eps}(t,\xi)\le \frac{|(\partial_tQ^{(2)}_{\delta,\eps}(t,\xi) V(t,\xi),V(t,\xi))|E_{\delta,\eps}(t,\xi)}{(Q^{(2)}_{\delta,\eps}(t,\xi)V(t,\xi), V(t,\xi))}\\
+\lara{\xi}|((Q^{(2)}_{\delta,\eps} A_\eps-A_\eps^\ast Q^{(2)}_{\delta,\eps})(t,\xi)V(t,\xi),V(t,\xi))|.
\end{multline}
Let now 
\[
K_{\delta,\eps}(t,\xi):=\frac{|(\partial_tQ^{(2)}_{\delta,\eps}(t,\xi) V(t,\xi),V(t,\xi))|}{(Q^{(2)}_{\delta,\eps}(t,\xi)V(t,\xi), V(t,\xi))},
\]
provided that $V\neq 0$. Hence, we can rewrite \eqref{EE} as
\beq
\label{EE2}
\partial_t E_{\delta,\eps}(t,\xi)\le K_{\delta,\eps}(t,\xi)E_{\delta,\eps}(t,\xi)+\lara{\xi}|((Q^{(2)}_{\delta,\eps} A_\eps-A_\eps^\ast Q^{(2)}_{\delta,\eps})(t,\xi)V(t,\xi),V(t,\xi))|.
\eeq
By Proposition \ref{prop_qs_2}$(ii)$ we have that
\begin{multline*}
|((Q^{(2)}_{\delta,\eps} A_\eps-A_\eps^\ast Q^{(2)}_{\delta,\eps})(t,\xi)V(t,\xi),V(t,\xi))|\le C_2\delta (Q^{(2)}_{\delta,\eps})(t,\xi)V(t,\xi),V(t,\xi))\\
=C_2\delta E_{\delta,\eps}(t,\xi).
\end{multline*}
Hence
\beq
\label{EEfinal}
\partial_t E_{\delta,\eps}(t,\xi)\le (K_{\delta,\eps}(t,\xi)+C_2\delta \lara{\xi})E_{\delta,\eps}(t,\xi).
\eeq
In the following we take any fixed integer $k\ge 2$. Writing now 
\[
\int_0^T K_{\delta,\eps}(t,\xi)\, dt
\]
as
\[
\int_0^T\frac{|(\partial_tQ^{(2)}_{\delta,\eps}(t,\xi) V(t,\xi),V(t,\xi))|}{(Q^{(2)}_{\delta,\eps}(t,\xi) V(t,\xi), V(t,\xi))^{1-1/k}(Q^{(2)}_{\delta,\eps}(t,\xi) V(t,\xi), V(t,\xi))^{1/k}}\, dt,
\]
from the bound from below in Proposition \ref{prop_qs_2}$(i)$, Lemma \ref{lem_new_2} and the estimates on the roots $\lambda_{i,\eps}(t,\xi)$, $i=1,2$, we have that
\begin{multline}
\label{est_k}
\int_0^T K_{\delta,\eps}(t,\xi)\, dt\le \int_0^T \frac{|(\partial_tQ^{(2)}_{\delta,\eps}(t,\xi) V(t,\xi),V(t,\xi))|}{(Q^{(2)}_{\delta,\eps}(t,\xi) V(t,\xi), V(t,\xi))^{1-1/k}(C_2^{-1}\omega(\eps)^{2L}\delta^2|V(t,\xi)|^2)^{1/k}}\, dt\\
= C_2^{\frac{1}{k}}\delta^{-\frac{2}{k}}\omega(\eps)^{-\frac{2L}{k}} \int_0^T \frac{|(\partial_tQ^{(2)}_{\delta,\eps}(t,\xi) V(t,\xi),V(t,\xi))|}{(Q^{(2)}_{\delta,\eps}(t,\xi) V(t,\xi), V(t,\xi))^{1-1/k}|V(t,\xi)|^{2/k}}\, dt\\
\le C_2^{\frac{1}{k}}\omega(\eps)^{-\frac{2L}{k}}\delta^{-\frac{2}{k}} \Vert Q^{(2)}_{\delta,\eps}(\cdot,\xi)\Vert^{1/k}_{{C}^k([0,T])}
\le C_1\delta^{-\frac{2}{k}}\omega(\eps)^{-\frac{2L}{k}}\omega(\eps)^{-\frac{L}{k}-1},
\end{multline}
uniformly in all the variables and parameters. Combining now \eqref{est_k} with the estimate on $|((Q^{(2)}_{\delta,\eps} A_\eps-A_\eps^\ast Q^{(2)}_{\delta,\eps})(t,\xi)V(t,\xi),V(t,\xi))|$ above, by Gronwall lemma we obtain
\beq
\label{Gron_lem}
E_{\delta,\eps}(t,\xi)\le E_{\delta,\eps}(0,\xi)\esp^{C_1\delta^{-\frac{2}{k}}\omega(\eps)^{-\frac{3L}{k}-1}+C_2T\delta \lara{\xi}}\le  E_{\delta,\eps}(0,\xi)\esp^{C_T(\delta^{-\frac{2}{k}}\omega(\eps)^{-\frac{3L}{k}-1}+\delta \lara{\xi})}.
\eeq
As in \cite{GR:12} set $\delta^{-\frac{2}{k}}=\delta\lara{\xi}$. It follows that $\delta^{-\frac{2}{k}}=\lara{\xi}^{\frac{1}{\sigma}}$, where
\[
\sigma= 1+\frac{k}{2}.
\] 
Making use of the estimates in Proposition \ref{prop_qs_2}$(i)$, of the definition of $Q^{(2)}_{\delta,\eps}$ and of the fact that $\omega(\eps)^{-1}\ge 1$, we obtain
\begin{multline*}
C_2^{-1}\omega(\eps)^{2L}\delta^2|V(t,\xi)|^2\le E_{\delta,\eps}(t,\xi)\le E_{\delta,\eps}(0,\xi)\esp^{C_T\omega(\eps)^{-\frac{3L}{k}-1}\lara{\xi}^{\frac{1}{\sigma}}}\\
\le C_2\omega(\eps)^{-2L}|V(0,\xi)|^2\esp^{C_T\omega(\eps)^{-\frac{3L}{k}-1}\lara{\xi}^{\frac{1}{\sigma}}}.
\end{multline*}
This implies, for $M=(3L+k)/k$,
\begin{multline*}
|V(t,\xi)|^2\le C_2^2\delta^{-2}\omega(\eps)^{-4L}|V(0,\xi)|^2\esp^{C_T\omega(\eps)^{-M}\lara{\xi}^{\frac{1}{\sigma}}}\\
= C_2^2\omega(\eps)^{-4L}\lara{\xi}^{\frac{k}{\sigma}}|V(0,\xi)|^2\esp^{C_T\omega(\eps)^{-M}\lara{\xi}^{\frac{1}{\sigma}}},
\end{multline*}
or equivalently
\[
|V(t,\xi)|\le C\omega(\eps)^{-2L}\lara{\xi}^{\frac{k}{2\sigma}}|V(0,\xi)|\esp^{C\omega(\eps)^{-M}\lara{\xi}^{\frac{1}{\sigma}}},
\]
for a suitable constant $C>0$.

We begin by assuming that the initial data are in $\gamma^s(\R^n)$. This means that 
\[
|V(0,\xi)|\le C'_0\esp^{-C_0\lara{\xi}^{\frac{1}{s}}}.
\]
Since our solution is  depending on the parameter $\eps$ from now on we will adopt the notation $V_\eps$. Note that when the initial data are in $\gamma_c^s(\R^n)$ we do not need any regularisation to embed them in the algebra $\G^s(\R^n)$, due to Proposition \ref{prop_reg_gevrey_sim}(ii). Hence,
\beq
\label{EEV}
|V_\eps(t,\xi)|\le C\omega(\eps)^{-2L}\lara{\xi}^{\frac{k}{2\sigma}}|V_\eps(0,\xi)|\esp^{C\omega(\eps)^{-M}\lara{\xi}^{\frac{1}{\sigma}}},
\eeq
and by simple estimates
\begin{multline}
\label{est_V_eps}
|V_\eps(t,\xi)|\le C\omega(\eps)^{-2L}\lara{\xi}^{\frac{k}{2\sigma}}C'_0\esp^{-C_0\lara{\xi}^{\frac{1}{s}}}\esp^{C\omega(\eps)^{-M}\lara{\xi}^{\frac{1}{\sigma}}}\\
=CC'_0\omega(\eps)^{-2L}\lara{\xi}^{\frac{k}{2\sigma}}\esp^{-\frac{C_0}{2}\lara{\xi}^{\frac{1}{s}}}\esp^{-\frac{C_0}{2}\lara{\xi}^{\frac{1}{s}}+C\omega(\eps)^{-M}\lara{\xi}^{\frac{1}{\sigma}}}.
\end{multline}
If $s<\sigma$, the condition
\[
-\frac{C_0}{2}+C\omega(\eps)^{-M}\lara{\xi}^{\frac{1}{\sigma}-\frac{1}{s}}\le 0
\]
is equivalent to
\[
\begin{split}
C\omega(\eps)^{-M}\lara{\xi}^{\frac{1}{\sigma}-\frac{1}{s}} &\le \frac{C_0}{2},\\
\lara{\xi}^{\frac{1}{\sigma}-\frac{1}{s}}&\le \frac{C_0}{2}\frac{1}{C}\omega(\eps)^M,\\
\lara{\xi}^{\frac{1}{s}-\frac{1}{\sigma}}&\ge \big(\frac{C_0}{2}\frac{1}{C}\big)^{-1}\omega(\eps)^{-M},\\
\lara{\xi}&\ge  \biggl(\big(\frac{C_0}{2}\frac{1}{C}\big)^{-1}\omega(\eps)^{-M}\biggr)^{\frac{1}{\frac{1}{s}-\frac{1}{\sigma}}}
\end{split}
\]
or, in other words, to the condition
\beq
\label{R_eps}
\lara{\xi}\ge R_\eps:= \biggl(\big(\frac{C_0}{2}\frac{1}{C}\big)^{-1}\omega^{-M}(\eps)\biggr)^{\frac{1}{\frac{1}{s}-\frac{1}{\sigma}}}.
\eeq
Assume now that $\omega(\eps)^{-1}$ is moderate, i.e. $\omega(\eps)^{-1}\le c\eps^{-r}$ for some $r\ge 0$. Hence, there exists $N\in\N$ such that under the assumption \eqref{R_eps} the estimate \eqref{est_V_eps} yields
\beq
\label{est_prima}
|V_\eps(t,\xi)|\le c'\eps^{-N}\esp^{-C'\lara{\xi}^{\frac{1}{s}}},
\eeq
which proves that the net $U_\eps=\mathcal{F}^{-1}(V_\eps{1}_{\lara{\xi}\ge R_\eps})$ is $\gamma^s$-moderate. It remains to estimate $V_\eps(t,\xi)$ when $\lara{\xi}\le R_\eps$. Going back to \eqref{est_V_eps} we have that if $\lara{\xi}\le R_\eps$ then
\begin{multline*}
|V_\eps(t,\xi)|\le C\omega(\eps)^{-2L}\lara{\xi}^{\frac{k}{2\sigma}}C'_0\esp^{-C_0\lara{\xi}^{\frac{1}{s}}}\esp^{C\omega(\eps)^{-M}\lara{\xi}^{\frac{1}{\sigma}}}\\
\le
CC'_0\omega(\eps)^{-2L}\lara{\xi}^{\frac{k}{2\sigma}}\esp^{-\frac{C_0}{2}\lara{\xi}^{\frac{1}{s}}}\esp^{-\frac{C_0}{2}\lara{\xi}^{\frac{1}{s}}}\esp^{C\omega(\eps)^{-M}\lara{R_\eps}^{\frac{1}{\sigma}}}.
\end{multline*}
At this point, choosing $\omega(\eps)^{-M}\lara{R_\eps}^{\frac{1}{\sigma}}$ of logarithmic type, i.e.,
\begin{multline}
\label{log_scale_1}
\omega(\eps)^{-M}\lara{R_\eps}^{\frac{1}{\sigma}}\le c\log(\eps^{-1})\Leftrightarrow \omega(\eps)^{-M}{\omega(\eps)}^{\frac{-M\frac{1}{\sigma}}{\frac{1}{s}-\frac{1}{\sigma}}}\le c\log(\eps^{-1})\\ 
\Leftrightarrow \omega(\eps)^{-1}\le c (\log(\eps^{-1}))^{\frac{1}{M+\frac{M\frac{1}{\sigma}}{\frac{1}{s}-\frac{1}{\sigma}}}}\Leftrightarrow \omega(\eps)^{-1}\le c(\log(\eps^{-1}))^{\frac{\frac{1}{s}-\frac{1}{\sigma}}{\frac{1}{s}M}},
\end{multline}
we can conclude that there exists $N\in\N$ and $c',C'>0$ such that
\[
|V_\eps(t,\xi)|\le c'\esp^{-C'\lara{\xi}^{\frac{1}{s}}}\eps^{-N},
\]
for all $\eps\in(0,1]$, $t\in[0,T]$ and $\lara{\xi}\le R_\eps$. This  together with \eqref{est_prima} and Proposition \ref{prop_reg_gevrey_mod}(iii)  shows that the net $(U_\eps(t,\cdot))_\eps$ is $\gamma^s$-moderate on $\R^n$ for
\[
1< s<\sigma=1+\frac{k}{2}.
\]

We are now ready to state and prove the following well-posedness theorem.
\begin{theorem}
\label{theo_CP_1}
Let 
\[
\begin{split}
D_t^2u(t,x)-\sum_{i=1}^n b_i(t)D_tD_{x_i}u(t,x)-\sum_{i=1}^n a_i(t)D_{x_i}^2u(t,x)&=0,\\
u(0,x)&=g_0,\\
D_t u(0,x)&=g_1,\\
\end{split}
\]
where the coefficients $a_i$ and $b_i$ are real-valued distributions with compact support contained in $[0,T]$ and $a_i$ is non-negative for all $i=1,\dots,n$. Let $g_0$ and $g_1$ belong to $\gamma^s_c(\R^n)$ with $s>1$. Then there exists a suitable embedding of the coefficients $a_i$'s and $b_i$'s into $\G([0,T])$ such that he Cauchy problem above has a unique solution $u\in \G([0,T];\G^s(\R^n))$.  

\end{theorem}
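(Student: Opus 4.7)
The strategy is to convert the distributional Cauchy problem into a family of classical Cauchy problems with smooth coefficients parametrised by $\eps$, solve each classically in $C^\infty([0,T];\gamma^s(\R^n))$, and then verify that the resulting net is moderate and that the construction is independent of negligible perturbations. The embedding of the coefficients is precisely the one introduced in Section \ref{sec_quasi}: $a_{i,\eps}=a_i\ast\psi_{\omega(\eps)}$, $b_{i,\eps}=b_i\ast\psi_{\omega(\eps)}$, with the scale $\omega(\eps)$ chosen of logarithmic type as in \eqref{log_scale_1}, namely $\omega(\eps)^{-1}\le c(\log(\eps^{-1}))^{(1/s-1/\sigma)/((1/s)M)}$ with $\sigma=1+k/2$ for a fixed integer $k\ge 2$ chosen so that $s<\sigma$. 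The initial data $g_0,g_1\in\gamma^s_c(\R^n)$ embed directly, without regularisation, thanks to Proposition \ref{prop_34}(ii).

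\textbf{Existence.} For each $\eps\in(0,1]$ the coefficients are smooth in $t$, so classical Gevrey well-posedness (Theorem \ref{THM:GR12}(ii) applied to the regularised equation) yields a unique $u_\eps\in C^\infty([0,T];\gamma^s(\R^n))$ satisfying the regularised Cauchy problem with data $g_0,g_1$. It remains to show the net $(u_\eps)_\eps$ is $C^\infty([0,T];\gamma^s(\R^n))$-moderate. Pointwise in $\xi$ the energy estimate \eqref{EEfinal} obtained from Proposition \ref{prop_qs_2} and Lemma \ref{lem_new_2}, together with Gronwall's inequality and the optimisation $\delta^{-2/k}=\delta\langle\xi\rangle$, yields the bound \eqref{EEV} on $V_\eps(t,\xi)=\F U_\eps(t,\cdot)(\xi)$. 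Splitting the frequency range into $\langle\xi\rangle\ge R_\eps$ and $\langle\xi\rangle\le R_\eps$ as in the paragraphs preceding the theorem, and using that $g_0,g_1\in\gamma^s_c$ gives exponential decay of $|V_\eps(0,\xi)|$, one obtains an estimate $|V_\eps(t,\xi)|\le c'\eps^{-N}\exp(-C'\langle\xi\rangle^{1/s})$ uniformly in $t\in[0,T]$. By Proposition \ref{prop_reg_gevrey_mod}(iii), $(U_\eps(t,\cdot))_\eps$ is $\gamma^s$-moderate uniformly in $t$. Time derivatives $\partial_t^h u_\eps$ are recovered by differentiating the equation: each application introduces at most $\omega(\eps)^{-L-h}$ factors from the coefficients, which is absorbed into an $\eps^{-N_h}$ bound by the logarithmic choice of $\omega(\eps)$. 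This gives the full $C^\infty([0,T];\gamma^s(\R^n))$-moderate estimate required by Definition \ref{def_sol_space}.

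\textbf{Uniqueness.} Let $(\tilde a_{i,\eps})_\eps,(\tilde b_{i,\eps})_\eps$ be alternative $C^\infty$-moderate representatives differing from $(a_{i,\eps})_\eps,(b_{i,\eps})_\eps$ by $C^\infty$-negligible nets, and let $(\tilde g_{j,\eps})_\eps$ differ from $g_j$ by $\gamma^s$-negligible nets. Denote by $\tilde u_\eps$ the corresponding classical solution and set $w_\eps=u_\eps-\tilde u_\eps$. Then $w_\eps$ solves an inhomogeneous version of \eqref{intro_CP} with source term built from the negligible differences of coefficients acting on the moderate net $\tilde u_\eps$, and negligible Cauchy data. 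Applying the same quasi-symmetriser energy estimate to the inhomogeneous system, Duhamel's principle converts the source's negligibility into arbitrarily high powers of $\eps$ in the bound on $|\F w_\eps(t,\xi)|$, and Proposition \ref{prop_reg_gevrey_mod}(iv) concludes that $(w_\eps)_\eps$ is $\gamma^s$-negligible uniformly in $t$. Differentiating in $t$ via the equation yields $C^\infty([0,T];\gamma^s(\R^n))$-negligibility, so $u=\tilde u$ in $\G([0,T];\G^s(\R^n))$.

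\textbf{Main obstacle.} The delicate point is the joint balance in \eqref{est_V_eps} between the amplification $\exp(C\omega(\eps)^{-M}\langle\xi\rangle^{1/\sigma})$ coming from the irregularity of the coefficients and the Gevrey decay $\exp(-C_0\langle\xi\rangle^{1/s})$ of the data. For high frequencies $\langle\xi\rangle\ge R_\eps$ the data decay dominates; for low frequencies one can only hope for polynomial blow-up in $\eps$, and this forces $\omega(\eps)^{-M}\langle R_\eps\rangle^{1/\sigma}\lesssim\log(\eps^{-1})$, giving precisely the logarithmic scale \eqref{log_scale_1}. Verifying that this choice is simultaneously compatible with (a) the $C^k$-norm estimates on $Q^{(2)}_{\delta,\eps}$ entering Lemma \ref{lem_new_2} through the bound \eqref{est_lambda}, (b) the requirement $s<\sigma=1+k/2$, and (c) the Colombeau-type $\eps^{-N}$ moderateness, is the core technical step that ties together the preceding sections.
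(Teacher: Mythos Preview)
Your existence argument is correct and follows the paper's approach essentially verbatim: regularise the coefficients with the logarithmic scale \eqref{log_scale_1}, solve the smooth-coefficient problem classically, and use the quasi-symmetriser energy estimate plus the high/low-frequency split around $R_\eps$ to obtain $\gamma^s$-moderateness via Proposition \ref{prop_reg_gevrey_mod}(iii).

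The uniqueness argument, however, has a genuine gap. You propose to reuse the quasi-symmetriser estimate \eqref{EEV} for the inhomogeneous difference problem, asserting that Duhamel converts negligibility of the source and data into arbitrary powers of $\eps$. But the amplification in \eqref{EEV} is $\exp(C\omega(\eps)^{-M}\langle\xi\rangle^{1/\sigma})$, while a $\gamma^s$-negligible net (Proposition \ref{prop_reg_gevrey_mod}(ii)) has Fourier decay only of the form $c_q\eps^q\exp(-c\,\eps^{1/s}\langle\xi\rangle^{1/s})$, with the $\eps^{1/s}$ factor in the exponent. The logarithmic scale \eqref{log_scale_1} was calibrated for data with $\eps$-\emph{independent} Gevrey decay $\exp(-C_0\langle\xi\rangle^{1/s})$; against the weaker $\eps$-dependent decay, the low-frequency balance fails: for $\langle\xi\rangle$ below the new threshold one computes $\omega(\eps)^{-M}\langle\xi\rangle^{1/\sigma}\sim(\log\eps^{-1})\,\eps^{-r}$ with $r>0$, so the amplification is super-polynomial in $\eps^{-1}$ and swallows any $\eps^q$. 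Hence the conclusion that $(w_\eps)_\eps$ is $\gamma^s$-negligible does not follow from this estimate.

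The paper avoids this by switching to a \emph{different} energy estimate for the uniqueness step, adapted from Theorem 3 of \cite{GR:11}: one works with a weighted unknown carrying the factor $\exp(-\rho(t)\eps^{1/s}\langle\xi\rangle^{1/s})$, which yields \eqref{est_uniqueness}, namely
\[
|V_\eps(t,\xi)|\le c_1\omega(\eps)^{-N}\langle\xi\rangle^N\esp^{\kappa_1\eps^{1/s}\langle\xi\rangle^{1/s}}|V_\eps(0,\xi)|+c_2\omega(\eps)^{-N}\langle\xi\rangle^N\esp^{\kappa_2\eps^{1/s}\langle\xi\rangle^{1/s}}|\widehat{F_\eps}(t,\xi)|,
\]
with $\kappa_1,\kappa_2>0$ arbitrarily small. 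Here the amplification carries the same $\eps^{1/s}$ scaling as the decay of negligible nets, so taking $\kappa_j<c$ one directly absorbs the exponential and the remaining factor $\omega(\eps)^{-N}\langle\xi\rangle^N$ is handled by the arbitrary $\eps^q$. This weighted-energy device is the missing ingredient in your uniqueness sketch.
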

\begin{proof}
We begin by writing the equation
\[
D_t^2u(t,x)-\sum_{i=1}^n b_i(t)D_tD_{x_i}u(t,x)-\sum_{i=1}^n a_i(t)D_{x_i}^2u(t,x)=0
\]
as an equation in $\G([0,T];\G^s(\R^n))$. This means that we replace the coefficients $a_i$ and $b_i$ with the equivalence classes of $(a_{i,\eps})_\eps$ and $(b_{i,\eps})_\eps$ in $\G([0,T])$ as in Section \ref{sec_quasi}. Since the initial data are in $\G^s(\R^n)$ they can be imbedded in $\G^s(\R^n)$ as they are, i.e. $[(g_0)]\in \G^s_c(\R^n)$ and $[(g_1)]\in \G^s_c(\R^n)$.

\emph{Existence.} We argue now at the level of representatives and we transform the equation to the first order system \eqref{system_hyp}. From the theory of weakly hyperbolic equations and in particular from \cite{GR:12,KS} we know that that the Cauchy problem
\[
D_t^2u(t,x)-\sum_{i=1}^n b_{i,\eps}(t)D_tD_{x_i}u(t,x)-\sum_{i=1}^n a_{i,\eps}(t)D_{x_i}^2u(t,x)=0
\]
with initial data $g_0, g_1\in \gamma^s_c(\R^n)$, has a net of (classical) solutions $(u_\eps)_\eps\in C^2([0,T]:\gamma^s(\R^n))$. More precisely, we know that given $s> 1$ and for $k\ge 2$ there exists a solution $(u_\eps)_\eps\in C^2([0,T]:\gamma^s(\R^n))$ provided that 
\[
1< s<1+\frac{k}{2}.
\]
So, in the arguments which follow we assume $s$ and $k$ in this relation and we perform the embedding of the coefficients $a_i$ and $b_i$ with a logarithmic scale of the type  $\omega^{-1}(\eps)= c(\log(\eps^{-1}))^{r}$, $c\ge 0$, as in \eqref{log_scale_1}, where $r$ depends on $s$ and $k$.

It is our task to show that this net is moderate. From the energy estimates in Subsection \ref{subsec_energy} at the Fourier transform level we have that the net $(u_\eps)_\eps$ (or better the corresponding $(U_\eps)_\eps$) is $\gamma^s(\R^n)$-moderate with respect to $x$ with $s$ as above. Since this moderateness estimate is uniform in $t$ and the coefficients of the equation are smooth and moderate in $t\in[0,T]$ as well, by induction on the $t$-derivatives and arguing as in \cite{LO:91} we can easily conclude that $(u_\eps)_\eps$ is $C^\infty([0,T]; \gamma^\sigma(\R^n))$-moderate for 
\[
1< s<1+\frac{k}{2}.
\]
Hence, $(u_\eps)_\eps$ generates a solution $u\in \G([0,T];\G^s(\R^n))$ to our Cauchy problem.

\emph{Uniqueness.} Assume now that the Cauchy problem has another solution $v\in \G([0,T];\G^s(\R^n))$. At the level of representatives this means
\[
D_t^2(u_\eps-v_\eps)(t,x)-\sum_{i=1}^n b_{i,\eps}(t)D_tD_{x_i}(u_\eps-v_\eps)(t,x)-\sum_{i=1}^n a_{i,\eps}(t)D_{x_i}^2(u_\eps-v_\eps)(t,x)=f_\eps(t,x),
\]
with initial data
\[
\begin{split}
u_\eps(0,x)-v_\eps(0,x)&=n_{0,\eps}(x),\\
D_t u_\eps(0,x)-D_t v_\eps(0,x)&=n_{1,\eps}(x),
\end{split}
\]
where $(f_\eps)_\eps$ is $C^\infty([0,T];\gamma^s(\R^n))$-negligible and $(n_{0,\eps})_\eps$ and $(n_{1,\eps})_\eps$ are both compactly supported and $\gamma^s(\R^n)$-negligible. The corresponding first order system is 
\[
D_t\left(
                             \begin{array}{c}
                               w_{1,\eps} \\
                                w_{2,\eps} \\
                             \end{array}
                           \right)
= \left(
    \begin{array}{cc}
      0 & \lara{D_x}\\
      \sum_{i=1}^n a_{i,\eps}(t)D_{x_i}^2\lara{D_x}^{-1}& \sum_{i=1}^n b_{i,\eps}(t)D_{x_i} \\
           \end{array}
  \right)
  \left(\begin{array}{c}
                               w_{1,\eps} \\
                               
                               w_{2,\eps} \\
                             \end{array}
                           \right)+\left(\begin{array}{c}
                               0 \\
                               
                               f_\eps \\
                             \end{array}
                           \right),
                           \]
                           where $w_{1,\eps}$ and $w_{2,\eps}$ are obtained via the transformation
                           \[
                           w_{j,\eps}=D_t^{j-1}\lara{D_x}^{2-j}(u_\eps-v_\eps),\quad j=1,2.
                           \]
                                
This system will be studied after Fourier transform, as a system of the type
\[
D_t V_\eps(t,\xi)=\lara{\xi}A_{\eps}(t,\xi)V+F_\eps,
\]
with
\[
F_\eps=\left(\begin{array}{c}
                              0 \\
                                
                              \mathcal{F}_{x\to\xi}f_\eps \\
                             \end{array}
                           \right),
\]
These kind of systems and the corresponding weakly hyperbolic equations (with right hand-side) have been investigated in \cite{GR:11} under even less regular assumptions on the coefficients (H\"older). In particular, see Theorem 3 in \cite{GR:11}, Gevrey well-posedness results have been obtained for 
\[
1< s<1+\frac{k}{2}.
\]
The proof of Theorem 3 in \cite{GR:11} can be easily adapted to our situation by inserting everywhere a multiplicative factor $\omega(\eps)^{-L}$ coming from the regularisation of the coefficients and by replacing $\esp^{-\rho(t)\lara{\xi}^{\frac{1}{s}}}$ with  $\esp^{-\rho(t)\eps^{\frac{1}{s}}\lara{\xi}^{\frac{1}{s}}}$ in the formula (4.1) defining $V$ in \cite{GR:11}. The estimate (4.9) in \cite{GR:11} is therefore transformed into
\beq
\label{est_uniqueness}
|V_\eps(t,\xi)|\le c_1\omega(\eps)^{-N}\lara{\xi}^N\esp^{\kappa_1 \eps^{\frac{1}{s}}\lara{\xi}^{\frac{1}{s}}}|V_\eps(0,\xi)|+c_2\omega(\eps)^{-N}\lara{\xi}^N\esp^{\kappa_2\eps^{\frac{1}{s}}\lara{\xi}^{\frac{1}{s}}}|\widehat{F_\eps}(t,\xi)|,
\eeq
where $N\in\N$ depends on the equation or better on the regularity of the coefficients and $\kappa_1,\kappa_2>0$ can be chosen small enough. It follows that since the initial data $V_\eps(0,\xi)$ and the right-hand side $F_\eps(t,\xi)$ are negligible then $(V_\eps)_\eps$ is negligible as well in the suitable function spaces, or in other words, $(u_\eps-v_\eps)_\eps$ is $C([0,T], \gamma^s(\R^n))$-negligible. From the equation itself and the fact that the coefficients are nets of smooth functions one can deduce that the net $(u_\eps-v_\eps)_\eps$ is smooth in $t$ as well and more precisely that it is  $C^\infty([0,T], \gamma^s(\R^n))$-negligible. This proves that $u=v$ in $\G([0,T];\G^s(\R^n))$.

\end{proof}

\vspace{0.2cm}

\section{Case 2: well-posedness for smooth initial data }
We now work under the assumption that the initial data $g_0$ and $g_1$ are not Gevrey but still smooth. More precisely, $g_0, g_1\in C^\infty_c(\R^n)$. By convolution with a mollifier $\varphi_\eps$ as in Case 1 we get a net of smooth functions. It is our aim to find for a function $u\in C^\infty_c(\R^n)$ a new regularisation of the type
\[
u\ast\rho_\eps
\]
such that the corresponding net is Gevrey. This will allow us to embed the initial data $g_0$ and $g_1$ in an algebra of Gevrey-Colombeau type and to proceed with the well-posedness of the Cauchy problem \eqref{intro_CP}.

We begin with the following regularisation inspired by \cite{BenBou:09}.

\subsection{Gevrey regularisation of smooth functions with compact support}\hspace{1cm}

In the sequel $\mathcal{S}^{(\sigma)}(\R^n)$, $\sigma>1$, denotes the space of all $\varphi\in\Cinf(\R^n)$ such that
\[
\Vert \varphi\Vert_{b,\sigma}=\sup_{\alpha,\beta\in\N^n}\int_{\R^n}\frac{|x^\beta|}{b^{|\alpha+\beta|}\alpha !^\sigma \beta !^\sigma}|\partial^\alpha \varphi(x)|\, dx<\infty
\]
for all $b>0$.

We recall that the Gelfand-Shilov space $\mathcal{S}^{(\sigma)}(\R^n)$ is Fourier transform invariant (see e.g. \cite[Chapter 6]{NiRo:10} and \cite{Teo:06}). It follows that taking the inverse Fourier transform $\phi=\mF^{-1}\psi$ of a function $\psi\in\mathcal{S}^{(\sigma)}(\R^n)$ identically $1$ in a neighborhood of $0$ one gets a function $\phi\in\mathcal{S}^{(\sigma)}(\R^n)$ with
\beq
\label{mollifier}
\int\phi(x)\, dx=1,\qquad \text{and}\qquad \int x^\alpha\phi(x)\, dx=0,\quad \text{for all $\alpha\neq 0$.}
\eeq
For instance, one can take $\psi\in \gamma^{(\sigma)}(\R^n)\cap \Cinfc(\R^n)$, where $\gamma^{(\sigma)}(\R^n)$ is the space of all $f\in\Cinf(\R^n)$ such that for all compact subset $K$ of $\R^n$ and all $b>0$ there exists $c>0$ such that $\sup_{x\in K}|\partial^\alpha f(x)|\le c\, b^{|\alpha|}\alpha !^\sigma$ for all $\alpha\in\N^n$.

We say that $\phi\in\mathcal{S}^{(\sigma)}(\R^n)$ is a mollifier if the property \eqref{mollifier} holds. Let now $\chi\in \gamma^\sigma(\R^n)$ with $0\le\chi\le 1$, $\chi(x)=0$ for $|x|\ge 2$ and $\chi(x)=1$ for $|x|\le 2$. We define (as in \cite{BenBou:09}) the net of Gevrey functions
\beq
\label{def_rho}
\rho_\eps(x):=\eps^{-n}\phi\biggl(\frac{x}{\eps}\biggr)\chi(x|\log\eps|).
\eeq
Note that the following estimates are valid for $\eps$ small enough, i.e., for all $\eps\in(0,\eta]$ with $\eta\in(0,1]$. Without loss of generality we can assume $\eta=1$.
\begin{proposition}
\label{prop_reg_smooth}
Let $u\in C^\infty_c(\R^n)$ and $\rho_\eps$ as above. Then, there exists $K\Subset\R^n$ such that $\supp(u\ast\rho_\eps)\subseteq K$ for all $\eps$ small enough and 
\begin{itemize}
\item[(i)] there exists $c>0$ and $\eta\in(0,1]$ such that
\[
|\partial^\alpha(u\ast\rho_\eps)(x)|\le c^{|\alpha|+1} (\alpha !)^\sigma \eps^{-|\alpha|}
\]
for all $\alpha\in\N^n$, $x\in\R^n$ and $\eps\in(0,\eta]$, or in other words, $(u\ast\rho_\eps)_\eps$ is $\gamma^\sigma_c$-moderate.
\item[(ii)] The net $(u\ast\rho_\eps-u)_\eps$ is compactly supported uniformly in $\eps$ and $C^\infty$-negligible.
\item[(iii)] There exist $c,c'>0$ and $\eta\in(0,1]$ such that
\[
|\widehat{u_\eps}(\xi)|\le c'\,\esp^{-c\,\eps^{\frac{1}{\sigma}}\lara{\xi}^{\frac{1}{\sigma}}},
\]
for all $\xi\in\R^n$ and $\eps\in(0,\eta]$.
\end{itemize}
\end{proposition}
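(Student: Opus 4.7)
The plan is to handle the three assertions in sequence, exploiting the two features of $\rho_\eps$ separately: the $\eps$-scaling in $\phi(\cdot/\eps)$ carries the Gevrey regularity, while the cutoff $\chi(\cdot|\log\eps|)$ localises $\rho_\eps$ to the shrinking ball $\{|x|\le 2/|\log\eps|\}\subset\overline{B(0,2)}$ for $\eps$ small. Consequently $\supp(u\ast\rho_\eps)\subset \supp u+\overline{B(0,2)}=:K$, a fixed compact set, which settles the uniform compact support claim in the preamble and the support part of (ii).

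For (i) I would place all derivatives on $\rho_\eps$ and apply Young's inequality,
\[
|\partial^\alpha(u\ast\rho_\eps)(x)|\le \|u\|_{L^\infty}\,\|\partial^\alpha\rho_\eps\|_{L^1}.
\]
Expanding by Leibniz yields
\[
\partial^\alpha\rho_\eps(w)=\sum_{\beta\le\alpha}\binom{\alpha}{\beta}\eps^{-n-|\beta|}(\partial^\beta\phi)(w/\eps)\,|\log\eps|^{|\alpha-\beta|}(\partial^{\alpha-\beta}\chi)(w|\log\eps|).
\]
Pulling out $\|\partial^{\alpha-\beta}\chi\|_{L^\infty}\le C^{|\alpha-\beta|+1}((\alpha-\beta)!)^\sigma$ (from $\chi\in\gamma^\sigma$) and integrating the remaining factor via $w=\eps z$ converts $\eps^{-n}\int|(\partial^\beta\phi)(w/\eps)|\,dw$ into $\|\partial^\beta\phi\|_{L^1}\le C_1^{|\beta|+1}(\beta!)^\sigma$, which is precisely the defining Gelfand--Shilov seminorm of $\mathcal{S}^{(\sigma)}$ evaluated at $\beta=0$. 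The identity $\binom{\alpha}{\beta}\beta!(\alpha-\beta)!=\alpha!$ combined with $\beta!(\alpha-\beta)!\le\alpha!$ gives $\binom{\alpha}{\beta}(\beta!)^\sigma((\alpha-\beta)!)^\sigma\le(\alpha!)^\sigma$, and the elementary inequality $|\log\eps|\le\eps^{-1}$ (valid on all of $(0,1]$) turns $\eps^{-|\beta|}|\log\eps|^{|\alpha-\beta|}$ into $\eps^{-|\alpha|}$, with the number of multi-indices contributing an absorbable factor $2^{|\alpha|}$. This produces $\|\partial^\alpha\rho_\eps\|_{L^1}\le c^{|\alpha|+1}(\alpha!)^\sigma\eps^{-|\alpha|}$ and hence (i).

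For (ii) I would split $u\ast\rho_\eps-u=(u\ast\rho_\eps-u\ast\varphi_\eps)+(u\ast\varphi_\eps-u)$ with $\varphi_\eps(x):=\eps^{-n}\phi(x/\eps)$. The second piece is the classical mollifier error: since $u\in C^\infty_c$ has all derivatives bounded and $\int z^\gamma\phi=0$ for $\gamma\neq 0$, Taylor expansion to arbitrary order gives $|\partial^\alpha(u\ast\varphi_\eps-u)(x)|\le c_{q,\alpha}\eps^q$ exactly as in Proposition~\ref{prop_reg_gevrey_sim}(ii). For the first piece, the difference $\rho_\eps-\varphi_\eps=\eps^{-n}\phi(w/\eps)[\chi(w|\log\eps|)-1]$ vanishes on $|w|\le 1/|\log\eps|$, so differentiating on the $u$ side and substituting $z=w/\eps$ gives
\[
|\partial^\alpha(u\ast(\rho_\eps-\varphi_\eps))(x)|\le 2\,\|\partial^\alpha u\|_{L^\infty}\int_{|z|\ge(\eps|\log\eps|)^{-1}}|\phi(z)|\,dz.
\]
Since $\phi\in\mathcal{S}(\R^n)$ has faster-than-polynomial decay and $\eps|\log\eps|\to 0$, this tail integral is $O((\eps|\log\eps|)^M)$ for every $M$, hence $O(\eps^q)$ for every $q$, completing the $C^\infty$-negligibility.

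For (iii), part (i) gives $\gamma^\sigma$-moderateness of $(u\ast\rho_\eps)_\eps$ with the crucial value $N=0$, and $\supp(u\ast\rho_\eps)$ is uniformly in $K$. Tracing through the proof of Proposition~\ref{prop_reg_gevrey_mod}(i) with $N=0$ shows the prefactor $\eps^{-N/\sigma}$ collapses to $1$, so one obtains directly $|\widehat{u\ast\rho_\eps}(\xi)|\le c'\,\esp^{-c\eps^{1/\sigma}\lara{\xi}^{1/\sigma}}$, which is (iii). The main obstacle is the sharpness in (i): the naive $L^\infty$ bound on $\partial^\alpha\rho_\eps$ drags along an unwanted $\eps^{-n}$ from the mollifier, which is absorbed only after passing to the $L^1$ norm, and balancing the logarithmic cutoff scale against the $\eps$-scale through $|\log\eps|\le\eps^{-1}$ must be done carefully to preserve the sharp $\eps^{-|\alpha|}$ growth without an extra $\eps^{-N}$ that would break part (iii).
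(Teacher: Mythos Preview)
Your proof is correct and follows essentially the same approach as the paper: the Leibniz expansion with change of variable $z=w/\eps$ and the bound $|\log\eps|\le\eps^{-1}$ in (i), the splitting $u\ast\rho_\eps-u=(u\ast\rho_\eps-u\ast\varphi_\eps)+(u\ast\varphi_\eps-u)$ in (ii), and the appeal to Proposition~\ref{prop_reg_gevrey_mod}(i) with $N=0$ in (iii) are exactly the paper's arguments. Your factorial inequality $\binom{\alpha}{\beta}(\beta!)^\sigma((\alpha-\beta)!)^\sigma\le(\alpha!)^\sigma$ is a slightly cleaner bookkeeping device than the paper's route through $|\alpha|^{\sigma|\alpha|}$, and you spell out the tail-integral argument for $(u\ast(\rho_\eps-\varphi_\eps))$ which the paper leaves as ``easy to check'', but the overall strategy is the same.
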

\begin{proof}
(i) We begin by observing that there exists a compact set $K\subseteq\R^n$ such that $\supp(u\ast\rho_\eps)\subseteq K$ for all $\eps\in(0,1/2]$. Indeed, since the function $u$ has compact support and $\supp\,\rho_\eps\subseteq|\log\eps|^{-1} (\supp\,\chi)$ we get the inclusion
\[
\supp(u\ast\rho_\eps)\subseteq \supp\, u + |\log(1/2)|^{-1}\supp\,\chi.
\]
We write $\partial^\alpha(u\ast\rho_\eps)(x)$ as
\[
(u\ast\partial^\alpha\rho_\eps)(x)=\eps^{-n}\sum_{\gamma\le\alpha}\binom{\alpha}{\gamma}\partial^\gamma\phi\biggl(\frac{x}{\eps}\biggr)\eps^{-|\gamma|}\partial^{\alpha-\gamma}\chi(x|\log\eps|)|\log\eps|^{|\alpha-\gamma|}.
\]
Hence, the change of variable $y/\eps=z$ in 
\[
\int_{\R^n} u(x-y)\partial^\gamma\phi\biggl(\frac{y}{\eps}\biggr)\partial^{\alpha-\gamma}\chi(y|\log\eps|)\, dy
\]
entails
\begin{multline}
\label{conv_1}
|\partial^\alpha(u\ast\rho_\eps)(x)|\\
\le \sum_{\gamma\le\alpha}\binom{\alpha}{\gamma}\eps^{-|\gamma|}|\log\eps|^{|\alpha-\gamma|}\int_{\R^n}|u(x-\eps z)||\partial^\gamma\phi(z)||\partial^{\alpha-\gamma}\chi(\eps|\log\eps|z)|\, dz.
\end{multline}
Since $\chi\in \gamma^\sigma(\R^n)$ is compactly supported, there exists a constant $c_\varphi>0$ such that 
\beq
\label{gevrey_v}
|\partial^{\alpha-\gamma}\chi(\eps|\log\eps|z)|\le c_\chi^{|\alpha-\gamma|+1}(\alpha-\gamma)!^\sigma,
\eeq
for all $z\in\R^n$ and $\eps\in(0,1/2]$. Thus, combining \eqref{conv_1} with \eqref{gevrey_v} we obtain the estimate
\begin{multline}
\label{conv_2}
|\partial^\alpha(u\ast\rho_\eps)(x)|\le \sum_{\gamma\le\alpha}\binom{\alpha}{\gamma}\eps^{-|\gamma|}|\log\eps|^{|\alpha-\gamma|} c_\chi^{|\alpha-\gamma|+1}(\alpha-\gamma)!^\sigma\\
\int_{\R^n}\frac{|u(x-\eps z)||\partial^\gamma\phi(z)|(\gamma!)^\sigma}{(\gamma!)^\sigma}\, dz\\
\le c(u,\chi)\sum_{\gamma\le\alpha}\binom{\alpha}{\gamma}\eps^{-|\gamma|}|\log\eps|^{|\alpha-\gamma|} c_\chi^{|\alpha-\gamma|}(\alpha-\gamma)!^\sigma\Vert\phi\Vert_{\sigma,1}\gamma!^\sigma\\
\le c(u,\chi,\phi)\sum_{\gamma\le\alpha}\binom{\alpha}{\gamma}\eps^{-|\gamma|}|\log\eps|^{|\alpha-\gamma|}c_\chi^{|\alpha-\gamma|}(\alpha-\gamma)!^\sigma\gamma!^\sigma.
\end{multline}
Since $|\log \eps|$ is bounded by $\eps^{-1}$, $\sum_{\gamma\le\alpha}\binom{\alpha}{\gamma}=2^{|\alpha|}$ and $\delta!\le|\delta|!\le |\delta|^{|\delta|}$ for all $\delta\in\N^n$ we can conclude from \eqref{conv_2} that
\begin{multline*}
|\partial^\alpha(u\ast\rho_\eps)(x)|\le c\,c_1^{|\alpha|}\eps^{-|\alpha|}\sum_{\gamma\le\alpha}\binom{\alpha}{\gamma}|\alpha-\gamma|^{\sigma|\alpha-\gamma|}|\gamma|^{\sigma|\gamma|}
\le c\, c_1^{|\alpha|}\eps^{-|\alpha|}2^{|\alpha|}|\alpha|^{\sigma|\alpha|}\\
\le c' c_1^{|\alpha|}\eps^{-|\alpha|} 2^{|\alpha|}\espo^{\sigma|\alpha|}(\alpha!)^\sigma.
\end{multline*}
At this point collecting the terms with exponent $|\alpha|$ we conclude that there exist a constants $C>0$ and $C_1>0$ such that 
\beq
\label{final_est}
|\partial^\alpha(u\ast\rho_\eps)(x)|\le C^{|\alpha|}(\alpha!)^\sigma \eps^{-|\alpha|},
\eeq
uniformly in $\eps\in(0,1/2]$. 

(ii) By embedding of $C^\infty_c(\R^n)$ into the Colombeau algebra $\G(\R^n)$ we know that the net $(u-u\ast\phi_\eps)_\eps$ is $C^\infty$-negligible. It is easy to check that $(u\ast\phi_\eps-u\ast\rho_\eps)_\eps$ is $C^\infty$-negligible as well. Hence, $(u-u\ast\rho_\eps)_\eps$ is $C^\infty$-negligible.

(iii) In (i) we have proven that the net $(u\ast\rho_\eps)_\eps$ is $\gamma^\sigma_c$-moderate and has support contained in a compact set $K$ uniformly with respect to $\eps$. So, by Proposition \ref{prop_reg_gevrey_mod}(i) we immediately conclude that there exist $c,c'>0$ such that
\[
|\widehat{u_\eps}(\xi)|\le c'\,\esp^{-c\,\eps^{\frac{1}{\sigma}}\lara{\xi}^{\frac{1}{\sigma}}},
\]
for all $\xi\in\R^n$ and $\eps\in(0,1/2]$.
\end{proof}
In the sequel $\iota$ denotes the map
\[
C^\infty_c(\R^n)\to \G^\sigma_c(\R^n): u\mapsto [(u\ast \rho_\eps)_\eps].
\]
\begin{proposition}
\label{prop_embedd_Cinf}
\leavevmode
\begin{itemize}
\item[(i)] The map $\iota$ is injective on $C^\infty_c(\R^n)$.
\item[(ii)] If $u\in \gamma^\sigma_c(\R^n)$ then $(u\ast\phi_\eps- u\ast\rho_\eps)_\eps$ is $\gamma^\sigma$-negligible.
\end{itemize}
\end{proposition}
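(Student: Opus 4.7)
The plan is to handle the two parts separately: (i) will reduce to the standard Colombeau injectivity argument, while (ii) requires a direct tail estimate exploiting the Gelfand--Shilov decay of $\phi$.

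For part (i), I would proceed as follows. Suppose $\iota(u)=0$ in $\G^\sigma_c(\R^n)$, so the net $(u\ast\rho_\eps)_\eps$ is $\gamma^\sigma$-negligible. By Proposition \ref{prop_Colombeau_embedd} this net is \emph{a fortiori} $C^\infty$-negligible, i.e.\ in $\Neg(\R^n)$. On the other hand, Proposition \ref{prop_reg_smooth}(ii) already asserts that $(u-u\ast\rho_\eps)_\eps$ is $C^\infty$-negligible. Adding these, the constant net $(u)_\eps$ lies in $\Neg(\R^n)$, and letting $\eps\to 0$ in the negligibility bounds forces $\partial^\alpha u\equiv 0$ on every compact set for every $\alpha$, so $u=0$.

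For part (ii), the starting point is the algebraic identity
\[
\phi_\eps(y)-\rho_\eps(y)=\phi_\eps(y)\bigl(1-\chi(y|\log\eps|)\bigr),
\]
which is immediate from \eqref{def_rho}. Since $\chi$ equals $1$ on a neighborhood of the origin (say on $\{|y|\le r_0\}$), this difference is supported in $\{|y|\ge r_0/|\log\eps|\}$: a set that shrinks only logarithmically in $\eps$ but, after the rescaling $z=y/\eps$, sits far in the tail of $\phi$. Moving derivatives onto $u$ and using the Gevrey bound $|\partial^\alpha u(x-y)|\le C^{|\alpha|+1}(\alpha!)^\sigma$, the problem reduces to controlling the tail integral
\[
\int_{|z|\ge r_0/(\eps|\log\eps|)}|\phi(z)|\,dz.
\]

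The heart of the proof is to show that this tail is $\eps^q$-small for every $q$, and here the Gelfand--Shilov nature of $\phi$ is essential. From $\phi\in\mathcal{S}^{(\sigma)}(\R^n)$ I would extract the pointwise bound $|\phi(z)|\le C_A\,\esp^{-A|z|^{1/\sigma}}$ valid for every $A>0$; splitting the exponent in half and integrating, the tail is bounded by a constant times $\esp^{-c\,\eps^{-1/\sigma}|\log\eps|^{-1/\sigma}}$. The one technical check---and the point where the balance baked into the definition of $\rho_\eps$ really matters---is that $\eps^{-1/\sigma}$ beats any power of $|\log\eps|$, so the exponent dominates $q|\log\eps|$ as $\eps\to 0$ for every fixed $q$. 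Absorbing the resulting $\eps^q$ into $\eps^{q-|\alpha|}$ using $\eps\le 1$ delivers the negligible bound $c_{q,K}^{|\alpha|+1}(\alpha!)^\sigma\eps^{q-|\alpha|}$ of Definition \ref{def_negl_net}. I expect this combinatorial bookkeeping and the verification that the tail estimate matches the precise form of $\gamma^\sigma$-negligibility (rather than the estimate itself) to be the only delicate point.
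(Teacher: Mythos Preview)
Your argument is correct. Part~(i) is essentially the paper's proof; the paper routes through $u\ast\phi_\eps$ and the standard Colombeau embedding rather than directly through the constant net $(u)_\eps$, but this is a cosmetic difference.

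For part~(ii) you take a genuinely different route from the paper. The paper Taylor-expands $1-\chi(y\eps|\log\eps|)$ at the origin: since $\chi\equiv 1$ near $0$, the function $1-\chi$ vanishes there to infinite order, so the Lagrange remainder of order $q$ gives
\[
|1-\chi(y\eps|\log\eps|)|\le c(\chi,q)\,|y\eps\log\eps|^{q},
\]
after which only the finiteness of $\int|\phi(y)||y|^q\,dy$ and the elementary bound $|\eps\log\eps|^q\le\eps^{q/2}$ are needed. Your argument instead localises via the support of $1-\chi$ to a tail integral and invokes the sub-exponential Gelfand--Shilov decay of $\phi$. Both are valid; the paper's estimate actually uses less of the structure of $\phi$ (ordinary Schwartz-class moment bounds would already suffice), while your approach avoids the Taylor bookkeeping and makes the role of the logarithmic cutoff scale transparent. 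The pointwise bound $|\phi(z)|\le C_A\,\esp^{-A|z|^{1/\sigma}}$ for every $A>0$ is a standard equivalent description of $\mathcal{S}^{(\sigma)}$, covered by the references the paper cites, so there is no gap.
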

\begin{proof}
(i) Let $u\in C^\infty_c(\R^n)$ and $(u\ast\rho_\eps)_\eps$ be $\gamma^\sigma_c$-negligible. Hence, $(u\ast\rho_\eps)_\eps$ is $C^\infty$-negligible. Since $(u\ast\phi_\eps-u\ast\rho_\eps)_\eps$ is $C^\infty$-negligible we conclude that the net $(u\ast\phi_\eps)_\eps$ is $C^\infty$-negligible. By embedding of $C^\infty(\R^n)$ into the Colombeau algebra $\G(\R^n)$ it follows that $u=0$. This shows that the map $\iota$ is injective.


(ii) We write $(u\ast\phi_\eps- u\ast\rho_\eps)(x)$ as
\[
\int_{\R^n} u(x-\eps y)\phi(y)(1-\chi(y\eps|\log(\eps)|)\, dy.
\]
Hence, by the properties of $\chi$ and by the vanishing moments of $\varphi$, for any integer $q>1$ we get
\begin{multline*}
|\partial^\alpha(u\ast\phi_\eps- u\ast\rho_\eps)(x)|\\
\le \int_{\R^n}|\partial^\alpha u(x-\eps y)||\phi(y)|\sum_{|\beta|=q}\frac{|\partial^\beta\chi (y\eps|\log\eps|\theta)|}{\beta !}|\eps\log(\eps)y|^{|\beta|}\, dy\\
\le c^{|\alpha|+1} (\alpha!)^\sigma \int_{\R^n}|\phi(y)||y|^q c(\chi,q)|\eps\log(\eps)|^q\, dy\\
\le c(q,\chi,\phi) c^{|\alpha|+1} (\alpha!)^\sigma \eps^{\frac{q}{2}}.
\end{multline*}
This proves that the net $(u\ast\phi_\eps- u\ast\rho_\eps)(x)$ is $\gamma^\sigma$-negligible.
\end{proof}
Concluding, we can state that the algebra $\G^\sigma_c(\R^n)$ contains not only $\gamma^\sigma_c(\R^n)$ but also $C^\infty_c(\R^n)$ as a subalgebra. This is obtained by modifying the embedding from $u\ast\varphi_\eps$ in Section \ref{sec_case1} to $u\ast\rho_\eps$.

\subsection{Energy estimates and well-posedness}
\label{subsec_EE_2}
We now take initial data $g_0, g_1$ in $C^\infty_c(\R^n)$ and we embed them in $\G^s_c(\R^n)$ as $g_0\ast\rho_\eps$ and $g_1\ast\rho_\eps$. By repeating the transformation into first order system and the energy estimates of Case 1 at the Fourier transform level we arrive at \eqref{EEV}, i.e.
\[
|V_\eps(t,\xi)|\le C\omega(\eps)^{-2L}\lara{\xi}^{\frac{k}{2\sigma}}|V_\eps(0,\xi)|\esp^{C\omega(\eps)^{-M}\lara{\xi}^{\frac{1}{\sigma}}},
\]
for a suitable constant $C>0$ and $M=(3L+k)/k$. Since
\[
|V_\eps(0,\xi)|\le C'_0\esp^{-C_0\eps^{\frac{1}{s}}\lara{\xi}^{\frac{1}{s}}},
\]
we get
\begin{multline}
\label{est_V_eps_2}
|V_\eps(t,\xi)|\le C\omega(\eps)^{-2L}\lara{\xi}^{\frac{k}{2\sigma}}C'_0\esp^{-C_0\eps^{\frac{1}{s}}\lara{\xi}^{\frac{1}{s}}}\esp^{C\omega(\eps)^{-M}\lara{\xi}^{\frac{1}{\sigma}}}\\
=CC'_0\omega(\eps)^{-2L}\lara{\xi}^{\frac{k}{2\sigma}}\esp^{-\frac{C_0}{2}\eps^{\frac{1}{s}}\lara{\xi}^{\frac{1}{s}}}\esp^{-\frac{C_0}{2}\eps^{\frac{1}{s}}\lara{\xi}^{\frac{1}{s}}+C\omega(\eps)^{-M}\lara{\xi}^{\frac{1}{\sigma}}}.
\end{multline}
Recall that $s>1$ and that $k$ is any fixed integer with $k\ge 2$. Now, if $s<\sigma$, the following inequalities are equivalent:
\[
\begin{split}
-\frac{C_0}{2}\eps^{\frac{1}{s}}+C\omega(\eps)^{-M}\lara{\xi}^{\frac{1}{\sigma}-\frac{1}{s}}&\le 0,\\
C\omega(\eps)^{-M}\lara{\xi}^{\frac{1}{\sigma}-\frac{1}{s}} &\le \frac{C_0}{2}\eps^{\frac{1}{s}},\\
\lara{\xi}^{\frac{1}{\sigma}-\frac{1}{s}}&\le \frac{C_0}{2}\frac{1}{C}\omega(\eps)^{M}\eps^{\frac{1}{s}},\\
\lara{\xi}^{\frac{1}{s}-\frac{1}{\sigma}}&\ge \biggl(\big(\frac{C_0}{2}\frac{1}{C}\big)^{-1}\omega(\eps)^{-M}\eps^{-\frac{1}{s}}\biggr),\\
\lara{\xi}&\ge  \biggl(\big(\frac{C_0}{2}\frac{1}{C}\big)^{-1}\omega(\eps)^{-M}\eps^{-\frac{1}{s}}\biggr)^{\frac{1}{\frac{1}{s}-\frac{1}{\sigma}}},\\
\end{split}
\]
or, in other words,
\beq
\label{R_eps_2}
\lara{\xi}\ge R_\eps:= \biggl(\big(\frac{C_0}{2}\frac{1}{C}\big)^{-1}\omega(\eps)^{-M}\eps^{-\frac{1}{s}}\biggr)^{\frac{1}{\frac{1}{s}-\frac{1}{\sigma}}}.
\eeq
As in the previous case we take $\omega(\eps)^{-1}$ moderate. Under the assumption \eqref{R_eps_2} the estimate \eqref{est_V_eps_2} implies, for some $N\in\N$,
\beq
\label{est_prima_2}
|V_\eps(t,\xi)|\le C'\eps^{-N}\esp^{-C'\eps^{\frac{1}{s}}\lara{\xi}^{\frac{1}{s}}}.
\eeq
This shows that the net $U_\eps=\mathcal{F}^{-1}(V_\eps 1_{\lara{\xi}\ge R_\eps})$ is $\gamma^s$-moderate. We still have to estimate $V_\eps(t,\xi)$ when $\lara{\xi}\le R_\eps$. Going back to \eqref{est_V_eps_2} we have that if $\lara{\xi}\le R_\eps$ then
\begin{multline}
\label{est_V_2_3}
|V_\eps(t,\xi)|\le C\omega(\eps)^{-2L}\lara{\xi}^{\frac{k}{2\sigma}}C'_0\esp^{-C_0\eps^{\frac{1}{s}}\lara{\xi}^{\frac{1}{s}}}\esp^{C\omega(\eps)^{-M}\lara{\xi}^{\frac{1}{\sigma}}}\\
\le CC'_0\omega(\eps)^{-2L}\lara{\xi}^{\frac{k}{2\sigma}}\esp^{-\frac{C_0}{2}\eps^{\frac{1}{s}}\lara{\xi}^{\frac{1}{s}}}\esp^{-\frac{C_0}{2}\eps^{\frac{1}{s}}\lara{\xi}^{\frac{1}{s}}+C\omega(\eps)^{-M}\lara{R_\eps}^{\frac{1}{\sigma}}}
\end{multline}
At this point, choosing $\omega^{-M}(\eps)\lara{R_\eps}^{\frac{1}{\sigma}}$ of logarithmic type, i.e.,
\begin{multline}
\label{log_scale_2}
\omega^{-M}(\eps)\lara{R_\eps}^{\frac{1}{\sigma}}\le c\log(\eps^{-1})\Leftrightarrow \omega(\eps)^{-M}{\omega(\eps)}^{\frac{-M\frac{1}{\sigma}}{\frac{1}{s}-\frac{1}{\sigma}}}\eps^{-\frac{1}{s}\frac{\frac{1}{\sigma}}{\frac{1}{s}-\frac{1}{\sigma}}}\le c\log(\eps^{-1})\\ 
\Leftrightarrow \omega(\eps)^{-1}\le c (\log(\eps^{-1}))^{\frac{1}{M+\frac{M\frac{1}{\sigma}}{\frac{1}{s}-\frac{1}{\sigma}}}}\eps^{\frac{1}{\sigma M}},
\Leftrightarrow \omega(\eps)^{-1}\le c (\log(\eps^{-1}))^{\frac{\frac{1}{s}-\frac{1}{\sigma}}{\frac{1}{s}M}}\eps^{\frac{1}{\sigma M}},
\end{multline}
we can conclude that there exists $N\in\N$ and $c',C'>0$ such that
\[
|V_\eps(t,\xi)|\le c'\esp^{-C'\eps^{\frac{1}{s}}\lara{\xi}^{\frac{1}{s}}}\eps^{-N},
\]
for all $\eps\in(0,1]$, $t\in[0,T]$ and $\lara{\xi}\le R_\eps$.  Combing this last estimate with \eqref{est_prima_2} we can conclude, by Proposition \ref{prop_reg_gevrey_mod}$(iii)$, that, as in the previous case, the net $(U_\eps(t,\cdot))_\eps$ is $\gamma^s$-moderate on $\R^n$ for
\[
1< s<\sigma=1+\frac{k}{2}.
\]
We are now ready to state the following well-posedness theorem.
\begin{theorem}
\label{theo_CP_2}
Let 
\[
\begin{split}
D_t^2u(t,x)-\sum_{i=1}^n b_i(t)D_tD_{x_i}u(t,x)-\sum_{i=1}^n a_i(t)D_{x_i}^2u(t,x)&=0,\\
u(0,x)&=g_0,\\
D_t u(0,x)&=g_1,\\
\end{split}
\]
where the coefficients $a_i$ and $b_i$ are real valued distributions with compact support contained in $[0,T]$ and $a_i$ is non-negative for all $i=1,\dots,n$. Let $g_0$ and $g_1$ belong to $C^\infty_c(\R^n)$. Then, for all $s>1$ there exists a suitable embedding of the coefficients $a_i$ and $b_i$ into $\G([0,T])$ such that the Cauchy problem above has a unique solution $u\in \G([0,T];\G^s(\R^n))$.

\end{theorem}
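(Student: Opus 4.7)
The plan is to adapt the strategy used for Theorem \ref{theo_CP_1} to the case of smooth (but not necessarily Gevrey) initial data. The only structural change is in the way the Cauchy data are embedded into the algebra $\G([0,T];\G^s(\R^n))$: instead of embedding $g_0,g_1$ directly (as was permitted in Case 1 by Proposition \ref{prop_reg_gevrey_sim}), we use the Gevrey mollifier $\rho_\eps$ of \eqref{def_rho} and embed them as $(g_0\ast\rho_\eps)_\eps$ and $(g_1\ast\rho_\eps)_\eps$. By Proposition \ref{prop_reg_smooth} these nets are $\gamma^s_c$-moderate with supports contained in a common compact set, and by Proposition \ref{prop_reg_smooth}(iii) their Fourier transforms satisfy $|\widehat{g_{j,\eps}}(\xi)|\le C'_0\,\esp^{-C_0\eps^{1/s}\lara{\xi}^{1/s}}$. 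The coefficients $a_i,b_i$ are regularised as in Section \ref{sec_quasi} via convolution with $\psi_{\omega(\eps)}$, where the scale $\omega(\eps)$ is the one dictated by \eqref{log_scale_2}, namely $\omega(\eps)^{-1}\le c(\log(\eps^{-1}))^{r_1}\eps^{r_2}$ for appropriate $r_1,r_2>0$ depending on $s$ and $k$.

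\emph{Existence.} After fixing such embeddings we work at the level of representatives. For each $\eps\in(0,1]$, the regularised coefficients are smooth in $t$ and the regularised data are Gevrey of order $s$, so by the classical Gevrey well-posedness theory for weakly hyperbolic equations (Theorem \ref{THM:GR12}, or Kinoshita--Spagnolo \cite{KS} and \cite{GR:12}) there exists a unique classical solution $(u_\eps)_\eps\in C^2([0,T];\gamma^s(\R^n))$, provided that $1<s<1+k/2$. Passing to the associated first-order system and then to its Fourier transform as in \eqref{eq_V_eps}, we apply the energy estimate of Subsection \ref{subsec_EE_2}: the resulting bound \eqref{est_V_eps_2} splits into two regimes, separated by the threshold $R_\eps$ of \eqref{R_eps_2}. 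In the regime $\lara{\xi}\ge R_\eps$ the absorbing factor $\esp^{-\frac{C_0}{2}\eps^{1/s}\lara{\xi}^{1/s}}$ dominates and we obtain \eqref{est_prima_2}; in the regime $\lara{\xi}\le R_\eps$ the exponent $\omega(\eps)^{-M}\lara{R_\eps}^{1/\sigma}$ is of logarithmic type precisely when $\omega(\eps)$ is chosen as in \eqref{log_scale_2}, again yielding a bound $|V_\eps(t,\xi)|\le c'\eps^{-N}\esp^{-C'\eps^{1/s}\lara{\xi}^{1/s}}$. By Proposition \ref{prop_reg_gevrey_mod}(iii) this gives $\gamma^s$-moderateness of $(u_\eps(t,\cdot))_\eps$ uniformly in $t\in[0,T]$.

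To upgrade this to $C^\infty([0,T];\gamma^s(\R^n))$-moderateness, we differentiate the equation in $t$ and argue inductively: each $t$-derivative of $u_\eps$ is expressed through $\partial_t^j u_\eps$ for $j\le k-1$ and through $t$-derivatives of the (moderate, smooth) coefficients $a_{i,\eps},b_{i,\eps}$, which carry only additional factors of the form $\omega(\eps)^{-L-k}$ as in \eqref{est_lambda}; this is a standard induction already used in Case 1 following \cite{LO:91}. Hence $(u_\eps)_\eps$ generates an element $u\in\G([0,T];\G^s(\R^n))$ solving the Cauchy problem in the sense of this algebra.

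\emph{Uniqueness.} As in the proof of Theorem \ref{theo_CP_1}, suppose $v\in\G([0,T];\G^s(\R^n))$ is a second solution. At the level of representatives, $w_\eps:=u_\eps-v_\eps$ solves the same regularised equation with a $C^\infty([0,T];\gamma^s(\R^n))$-negligible right-hand side and with $\gamma^s_c$-negligible Cauchy data. Reducing to the first-order system and applying the energy estimate \eqref{est_uniqueness} from \cite[Theorem 3]{GR:11} (with the extra factor $\omega(\eps)^{-L}$ absorbed by the moderate scale) yields that $(w_\eps)_\eps$ is $C([0,T];\gamma^s(\R^n))$-negligible, and then $C^\infty([0,T];\gamma^s(\R^n))$-negligibility in $t$ follows from the equation itself and the moderateness of the coefficients, giving $u=v$ in $\G([0,T];\G^s(\R^n))$. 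The most delicate point is a bookkeeping one: one must track how the new factor $\eps^{-1/s}$ produced by the Gevrey regularisation $\rho_\eps$ of smooth data in \eqref{R_eps_2} interacts with the logarithmic-type choice of $\omega(\eps)$, so that the final threshold $R_\eps$ remains consistent with the bound $|V_\eps(t,\xi)|\le c'\eps^{-N}\esp^{-C'\eps^{1/s}\lara{\xi}^{1/s}}$ in both frequency regimes. This is exactly what forces the modified scale \eqref{log_scale_2}, and once that scale is fixed the remainder of the argument proceeds as in Case 1.
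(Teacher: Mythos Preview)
Your proposal is correct and follows essentially the same route as the paper: embed the smooth data via the Gevrey mollifier $\rho_\eps$ of \eqref{def_rho}, embed the coefficients via $\psi_{\omega(\eps)}$ with the modified scale \eqref{log_scale_2}, invoke the energy estimates of Subsection \ref{subsec_EE_2} with the two-regime splitting at $R_\eps$, and then repeat the uniqueness argument from Theorem \ref{theo_CP_1}. Your write-up is in fact more explicit than the paper's own proof, which simply refers back to Subsection \ref{subsec_EE_2} and Case 1; the only cosmetic difference is the labelling of the exponents in the scale $\omega(\eps)^{-1}\le c(\log(\eps^{-1}))^{r_1}\eps^{r_2}$.
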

\begin{proof}
We reduce the Cauchy problem above to a first order system and embed of the coefficients and initial data in the corresponding Colombeau algebras ($a_{i,\eps}\ast\psi_{\omega(\eps)}$, $b_{i,\eps}\ast\psi_{\omega(\eps)}$, $g_0\ast\rho_\eps$, $g_1\ast\rho_\eps$). Note that, we embed the coefficients $a_i$ and $b_i$ by means of a net $\omega(\eps)$ with $\omega^{-1}(\eps)\le c\, \eps^{r_1}(\log(\eps^{-1}))^{r_2}$ as in \eqref{log_scale_2}, where $r_1$ and $r_2$ depend on $s$ and fixed $k\ge 2$ with $1\le s<1+k$.

The energy estimates of Subsection \ref{subsec_EE_2} and the same arguments of Case 1 show the existence of a solution $u\in \G([0,T];\G^s(\R^n))$. The uniqueness of the solution $u$ is obtained as in the proof of Theorem \ref{theo_CP_1}.
\end{proof}

\section{Case 3: well-posedness for distributional initial data }
We pass now to consider distributional initial data, i.e. $g_0,g_1\in \E'(\R^n)$, and to investigate their convolution with the mollifier $\rho_\eps$.

\begin{proposition}
\label{prop_reg_distr}
Let $u\in\E'(\R^n)$ and $\rho_\eps$ as in \eqref{def_rho} . Then, there exists $K\Subset\R^n$ such that $\supp(u\ast\rho_\eps)\subseteq K$ for all $\eps$ small enough and there exist $C>0$, $N\in\N$ and $\eta\in(0,1]$ such that
\[
|\partial^\alpha(u\ast\rho_\eps)(x)|\le C^{|\alpha|+1} (\alpha !)^\sigma \eps^{-|\alpha|-N}
\]
for all $\alpha\in\N^n$, $x\in\R^n$ and $\eps\in(0,\eta]$.
\end{proposition}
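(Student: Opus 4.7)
The natural plan is to use the structure theorem for compactly supported distributions to reduce the problem to the case already handled in Proposition \ref{prop_reg_smooth}. Concretely, since $u\in\E'(\R^n)$ has some finite order $m\in\N$, there exist continuous functions $f_\beta$ with support in a fixed neighbourhood of $\supp u$ such that $u=\sum_{|\beta|\le m}\partial^\beta f_\beta$. Transferring the derivatives onto the mollifier gives
\[
\partial^\alpha(u\ast\rho_\eps)(x)=\sum_{|\beta|\le m}(f_\beta\ast\partial^{\alpha+\beta}\rho_\eps)(x),
\]
so the task becomes to obtain an $L^1$-bound on $\partial^\gamma\rho_\eps$ with Gevrey growth in $\gamma$ and polynomial growth in $\eps^{-1}$. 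The uniform compact support claim is then immediate, as $\supp(u\ast\rho_\eps)\subseteq\supp u+\supp\rho_\eps\subseteq\supp u+\overline{B(0,2|\log\eps|^{-1})}\subseteq\supp u+\overline{B(0,2)}$ for $\eps$ sufficiently small.

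The key estimate is an $L^1$-analogue of the pointwise bound obtained in the proof of Proposition \ref{prop_reg_smooth}. Expanding $\partial^\gamma\rho_\eps$ by the Leibniz rule yields
\[
\partial^\gamma\rho_\eps(x)=\eps^{-n}\sum_{\mu\le\gamma}\binom{\gamma}{\mu}\eps^{-|\mu|}(\partial^\mu\phi)(x/\eps)\,|\log\eps|^{|\gamma-\mu|}(\partial^{\gamma-\mu}\chi)(x|\log\eps|).
\]
I would then combine three ingredients: the $L^1$-Gevrey bound $\int|\partial^\mu\phi|\le b^{|\mu|}(\mu!)^\sigma\|\phi\|_{b,\sigma}$ which follows directly from $\phi\in\mathcal{S}^{(\sigma)}(\R^n)$ by setting $\beta=0$ in the defining seminorm; the sup-norm Gevrey bound on the compactly supported $\chi$; and the trivial inequality $|\log\eps|\le\eps^{-1}$ valid for small $\eps$. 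A change of variables $y=x/\eps$ absorbs the $\eps^{-n}$ prefactor, and the combinatorial identities $\mu!(\gamma-\mu)!\le\gamma!$ and $\sum_{\mu\le\gamma}\binom{\gamma}{\mu}=2^{|\gamma|}$ collapse the Leibniz sum. The outcome should be a clean estimate
\[
\|\partial^\gamma\rho_\eps\|_{L^1(\R^n)}\le C^{|\gamma|+1}(\gamma!)^\sigma\eps^{-|\gamma|},
\]
uniform in $\eps\in(0,\eta]$ for some $\eta>0$.

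Finally, combining the two pieces via the elementary inequality $|(f_\beta\ast\partial^{\alpha+\beta}\rho_\eps)(x)|\le\|f_\beta\|_\infty\|\partial^{\alpha+\beta}\rho_\eps\|_{L^1}$ and absorbing the $\beta$-contribution by means of $(\alpha+\beta)!\le 2^{|\alpha|+m}\alpha!\beta!$ and $\eps^{-|\alpha+\beta|}\le\eps^{-|\alpha|-m}$ into the constant and the final $\eps^{-N}$ factor yields the required bound with $N=m$. The only point requiring some care is the Leibniz computation for $\partial^\gamma\rho_\eps$: one must verify that the two competing scalings $\eps^{-|\mu|}$ coming from $\phi(x/\eps)$ and $|\log\eps|^{|\gamma-\mu|}$ coming from $\chi(x|\log\eps|)$ combine with the Gelfand--Shilov and Gevrey bounds to produce precisely $(\gamma!)^\sigma\eps^{-|\gamma|}$, with no spurious $\gamma$-dependent loss. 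Once this Gevrey-uniform control of $\rho_\eps$ is in hand, the passage from $C^\infty_c(\R^n)$ to $\E'(\R^n)$ is essentially automatic and costs only the additional $\eps^{-m}$ encoding the order of the distribution.
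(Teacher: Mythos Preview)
Your proposal is correct and follows essentially the same route as the paper: both use the structure theorem to write $u$ as derivatives of a continuous compactly supported function, transfer all derivatives onto $\rho_\eps$, expand via Leibniz, and combine the Gelfand--Shilov $L^1$-bound on $\partial^\mu\phi$ with the Gevrey sup-bound on $\partial^{\gamma-\mu}\chi$ and the inequality $|\log\eps|\le\eps^{-1}$ to obtain the desired Gevrey estimate with $N$ equal to the order of the distribution. The only cosmetic difference is that the paper invokes the single-term form $u=\partial^\beta g$ of the structure theorem rather than the finite sum you use, and carries out the estimate inside the convolution integral rather than isolating an $L^1$-bound on $\partial^\gamma\rho_\eps$ first.
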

\begin{proof}
The following proof differs from the proofs of Propositions \ref{prop_reg_gevrey_sim} and \ref{prop_reg_smooth} in terms of mollifier and dependence in $\eps$. We begin by noting that there exists a compact set $K\subseteq\R^n$ such that $\supp(u\ast\rho_\eps)\subseteq K$ for all $\eps\in(0,1/2]$. Indeed, since the distribution $u$ has compact support and $\supp\,\rho_\eps\subseteq|\log\eps|^{-1} (\supp\,\chi)$ we get the inclusion
\[
\supp(u\ast\rho_\eps)\subseteq \supp\, u + |\log(1/2)|^{-1}\supp\,\chi.
\]
By the structure of $u$ we know that there exists a continuous and compactly supported function $g$ such that 
\[
\partial^\alpha(u\ast\rho_\eps)=\partial^\alpha(\partial^\beta g\ast\rho_\eps)=g\ast\partial^{\alpha+\beta}\rho_\eps,
\]
where
\[
\partial^{\alpha+\beta}\rho_\eps=\eps^{-n}\sum_{\gamma\le\alpha+\beta}\binom{\alpha+\beta}{\gamma}\partial^\gamma\phi\biggl(\frac{x}{\eps}\biggr)\eps^{-|\gamma|}\partial^{\alpha+\beta-\gamma}\chi(x|\log\eps|)|\log\eps|^{|\alpha+\beta+\gamma|}.
\]
Hence, the change of variable $y/\eps=z$ in 
\[
\int_{\R^n} g(x-y)\partial^\gamma\phi\biggl(\frac{y}{\eps}\biggr)\partial^{\alpha+\beta-\gamma}\chi(y|\log\eps|)\, dy
\]
entails
\begin{multline}
\label{conv_1_b}
|\partial^\alpha(u\ast\rho_\eps)(x)|\le \sum_{\gamma\le\alpha+\beta}\binom{\alpha+\beta}{\gamma}\eps^{-|\gamma|}|\log\eps|^{|\alpha+\beta+\gamma|}\\
\int_{\R^n}|g(x-\eps z)||\partial^\gamma\phi(z)||\partial^{\alpha+\beta-\gamma}\chi(\eps|\log\eps|z)|\, dz.
\end{multline}
Since $\chi\in \gamma^\sigma(\R^n)$ is compactly supported, there exists a constant $c_\chi>0$ such that 
\beq
\label{gevrey_v_b}
|\partial^{\alpha+\beta-\gamma}\chi(\eps|\log\eps|z)|\le c_\varphi^{|\alpha+\beta-\gamma|+1}(\alpha+\beta-\gamma)!^\sigma,
\eeq
for all $z\in\R^n$ and $\eps\in(0,1/2]$. Hence, combining \eqref{conv_1_b} with \eqref{gevrey_v_b} we obtain the estimate
\begin{multline}
\label{conv_2_b}
|\partial^\alpha(u\ast\rho_\eps)(x)|\le \sum_{\gamma\le\alpha+\beta}\binom{\alpha+\beta}{\gamma}\eps^{-|\gamma|}|\log\eps|^{|\alpha+\beta+\gamma|} c_\chi^{|\alpha+\beta-\gamma|+1}(\alpha+\beta-\gamma)!^\sigma\\
\int_{\R^n}\frac{|g(x-\eps z)||\partial^\gamma\phi(z)|(\gamma!)^\sigma}{(\gamma!)^\sigma}\, dz\\
\le c(g,\chi)\sum_{\gamma\le\alpha+\beta}\binom{\alpha+\beta}{\gamma}\eps^{-|\gamma|}|\log\eps|^{|\alpha+\beta+\gamma|} c_\chi^{|\alpha+\beta-\gamma|}(\alpha+\beta-\gamma)!^\sigma\Vert\phi\Vert_{\sigma,1}\gamma!^\sigma\\
\le c(g,\chi,\phi)\sum_{\gamma\le\alpha+\beta}\binom{\alpha+\beta}{\gamma}\eps^{-|\gamma|}|\log\eps|^{|\alpha+\beta+\gamma|}c_\chi^{|\alpha+\beta-\gamma|}(\alpha+\beta-\gamma)!^\sigma\gamma!^\sigma.
\end{multline}
Since $|\log \eps|$ is bounded by $\eps^{-1}$, $\sum_{\gamma\le\alpha+\beta}\binom{\alpha+\beta}{\gamma}=2^{|\alpha+\beta|}$ and $\delta!\le|\delta|!\le |\delta|^{|\delta|}$ for all $\delta\in\N^n$ we can conclude from \eqref{conv_2_b} that
\begin{multline*}
|\partial^\alpha(u\ast\rho_\eps)(x)|\le c\,c_1^{|\alpha+\beta|}\eps^{-|\alpha|-|\beta|}\sum_{\gamma\le\alpha+\beta}\binom{\alpha+\beta}{\gamma}|\alpha+\beta-\gamma|^{\sigma|\alpha+\beta-\gamma|}|\gamma|^{\sigma|\gamma|}\\
\le c\, c_1^{|\alpha+\beta|}\eps^{-|\alpha|-|\beta|}2^{|\alpha+\beta|}|\alpha+\beta|^{\sigma|\alpha+\beta|}\\
\le c c_1^{|\alpha+\beta|}\eps^{-|\alpha|-|\beta|} 2^{|\alpha+\beta|}\espo^{\sigma|\alpha+\beta|}|\alpha+\beta|!^\sigma.
\end{multline*}
At this point collecting the terms with exponent $|\alpha|$ and the terms with exponent $|\beta|$ ($\beta$ depends only on $u$) we conclude that there exist a constants $C>0$ and $C_1>0$ such that 
\beq
\label{final_est}
|\partial^\alpha(u\ast\rho_\eps)(x)|\le C_1^{|\beta|}|\beta|!^\sigma C^{|\alpha|}|\alpha!|^\sigma \eps^{-|\alpha|-|\beta|},
\eeq
uniformly in $\eps\in(0,1/2]$. Note that by the inequality $|\delta|!\le n^{|\delta|}\delta !$ we have that \eqref{final_est} implies the assertion of Proposition \ref{prop_reg_distr} with $N=|\beta|$ and different constants.
\end{proof}
It follows that the net $(u\ast\rho_\eps)_\eps$ is $\gamma^\sigma_c$-moderate and therefore from Proposition \ref{prop_reg_gevrey_mod} we have that there exists $c>0$ and $N\in\N$ such that
\[
|\widehat{u\ast\rho_\eps}(\xi)|\le c\eps^{-N}\esp^{-c\eps^{\frac{1}{\sigma}}\lara{\xi}^{\frac{1}{\sigma}}},
\]
for all $\xi\in\R^n$ and $\eps$ small enough (from the proof, $\eps\in(0,1/2]$).

\begin{remark}
Starting from Proposition \ref{prop_reg_distr} and arguing as for the embedding of $\E'(\R^n)$ into $\G(\R^n)$ one can easily prove that
\[
\E'(\R^n)\to \G^\sigma_c(\R^n): u\mapsto [(u\ast\rho_\eps)_\eps]
\]
is an embedding of $\E'(\R^n)$ into $\G^\sigma_c(\R^n)$.
\end{remark}

\subsection{Energy estimates and well-posedness}
Let us now consider the Cauchy problem
\[
\begin{split}
D_t^2u(t,x)-\sum_{i=1}^n b_i(t)D_tD_{x_i}u(t,x)-\sum_{i=1}^n a_i(t)D_{x_i}^2u(t,x)&=0,\\
u(0,x)&=g_0,\\
D_t u(0,x)&=g_1,\\
\end{split}
\]
with $g_0,g_1\in \E'(\R^n)$. We embed coefficients and initial data in the corresponding Colombeau algebras and we transform the equation into a first order system similarly to Case 1 and 2. In, particular from Proposition \ref{prop_reg_distr} we have in this case that the initial data $V_{\eps}(0,\xi)$ fulfils
\[
|V_\eps(0,\xi)|\le \eps^{-N}C'_0\esp^{-C_0\eps^{\frac{1}{s}}\lara{\xi}^{\frac{1}{s}}},
\]
for some $N\in\N$. This modifies the estimates of Case 2 only by a multiplying factor $\eps^{-N}$. So for $R_\eps$ as in \eqref{R_eps_2} we get that there exists $N'\in\N$ such that
\[
|V_\eps(t,\xi)|\le c'\esp^{-C'\eps^{\frac{1}{s}}\lara{\xi}^{\frac{1}{s}}}\eps^{-N'},
\]
for all $\eps$, $t\in(0,T]$ and $\xi\in\R^n$. This result allows us to state the following well-posedness theorem.
\begin{theorem}
\label{theo_CP_3}
Let 
\[
\begin{split}
D_t^2u(t,x)-\sum_{i=1}^n b_i(t)D_tD_{x_i}u(t,x)-\sum_{i=1}^n a_i(t)D_{x_i}^2u(t,x)&=0,\\
u(0,x)&=g_0,\\
D_t u(0,x)&=g_1,\\
\end{split}
\]
where the coefficients $a_i$ and $b_i$ are real valued distributions with compact support contained in $[0,T]$ and $a_i$ is non-negative for all $i=1,\dots,n$. Then, the conclusion of Theorem \ref{theo_CP_2} holds for initial data $g_0$ and $g_1$ in $\E'(\R^n)$ as well.  
\end{theorem}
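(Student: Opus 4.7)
The plan is to run essentially the same machinery as in the proofs of Theorems \ref{theo_CP_1} and \ref{theo_CP_2}, because the preceding material has already prepared every nontrivial ingredient: Proposition \ref{prop_reg_distr} supplies the correct $\gamma^s_c$-moderate estimates for the regularisation $g_j\ast\rho_\eps$ of a compactly supported distribution, and the short energy analysis placed just before the theorem has already produced the Fourier-side bound
\[
|V_\eps(t,\xi)|\le c'\eps^{-N'}\esp^{-C'\eps^{1/s}\lara{\xi}^{1/s}}
\]
for the corresponding first-order system. My task is therefore mostly organisational: to assemble these ingredients into a proof of existence and uniqueness in $\G([0,T];\G^s(\R^n))$.

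For existence, I would first embed the coefficients $a_i,b_i$ into $\G([0,T])$ by convolution with $\psi_{\omega(\eps)}$, choosing the logarithmic--polynomial scale $\omega(\eps)^{-1}\le c\,\eps^{r_1}(\log(\eps^{-1}))^{r_2}$ dictated by \eqref{log_scale_2}, and embed $g_0,g_1\in\E'(\R^n)$ as $g_j\ast\rho_\eps$. I would then rewrite the equation as the first-order system \eqref{system_hyp_eps}, Fourier transform in $x$, and apply the quasi-symmetriser energy inequality of Subsection \ref{subsec_energy}. The only change compared to Case 2 is that the Fourier estimate on the initial data now carries an extra $\eps^{-N}$ factor coming from Proposition \ref{prop_reg_gevrey_mod}(i) applied to the $\gamma^s_c$-moderate net $(g_j\ast\rho_\eps)_\eps$; this factor propagates through the estimate to give the displayed bound. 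Proposition \ref{prop_reg_gevrey_mod}(iii) then ensures that $(U_\eps(t,\cdot))_\eps$ is $\gamma^s$-moderate in $x$ uniformly in $t\in[0,T]$ for any $1<s<1+k/2$. Moderateness in $t$ follows by induction on $\partial_t$-derivatives using the equation itself and the $C^\infty$-moderateness of the regularised coefficients, exactly as in the existence part of Theorem \ref{theo_CP_1}. Thus $(u_\eps)_\eps$ defines a solution $u\in \G([0,T];\G^s(\R^n))$.

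For uniqueness, I would reproduce the argument of Theorem \ref{theo_CP_1} verbatim: if $v\in \G([0,T];\G^s(\R^n))$ is another solution, then $(u_\eps-v_\eps)_\eps$ solves the regularised equation with a $C^\infty([0,T];\gamma^s(\R^n))$-negligible right-hand side and $\gamma^s(\R^n)$-negligible initial data, and the inhomogeneous Fourier estimate \eqref{est_uniqueness} imported from \cite{GR:11}, carrying the extra $\omega(\eps)^{-L}$ factor produced by the regularisation of the coefficients, forces $(u_\eps-v_\eps)_\eps$ to be negligible first in $C([0,T];\gamma^s(\R^n))$ and then, via the equation, in $C^\infty([0,T];\gamma^s(\R^n))$.

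The only point that requires some care is the bookkeeping of the additional $\eps^{-N}$ factor in $|V_\eps(0,\xi)|$ coming from the distributional nature of the Cauchy data. This is what distinguishes Case 3 from Case 2 and forces the same logarithmic--polynomial scale \eqref{log_scale_2} (rather than the purely logarithmic scale \eqref{log_scale_1}) on $\omega(\eps)$. I do not expect any genuine analytical obstacle beyond this, since the Gevrey--Colombeau moderateness framework is designed precisely to absorb polynomial-in-$\eps$ factors; the real work was already done when we built $\rho_\eps$ out of a Gelfand--Shilov mollifier and developed the parametrised quasi-symmetriser.
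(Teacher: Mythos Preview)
Your proposal is correct and follows essentially the same approach as the paper: the paper's proof consists precisely of the short energy analysis in Subsection 6.1 (the extra $\eps^{-N}$ in $|V_\eps(0,\xi)|$ from Proposition \ref{prop_reg_distr} propagates harmlessly through the Case 2 estimates with the same scale \eqref{log_scale_2} and the same $R_\eps$ as in \eqref{R_eps_2}), after which the theorem is stated without further argument. Your write-up is in fact more detailed than the paper's, which leaves the $t$-moderateness induction and the uniqueness argument entirely implicit as carryovers from Theorems \ref{theo_CP_1} and \ref{theo_CP_2}.
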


\section{Consistency with the classical well-posedness results}

We conclude this paper by showing that when the coefficients are regular enough and the initial data are Gevrey then the 
very weak solution coincides with the classical and ultradistributional ones obtained in \cite{GR:12, KS}.
\begin{theorem}
\label{theo_consistency}
Let 
\beq
\label{CP_cons}
\begin{split}
D_t^2u(t,x)-\sum_{i=1}^n b_i(t)D_tD_{x_i}u(t,x)-\sum_{i=1}^n a_i(t)D_{x_i}^2u(t,x)&=0,\\
u(0,x)&=g_0,\\
D_t u(0,x)&=g_1,\\
\end{split}
\eeq
where the real-valued coefficients $a_i$ and $b_i$ are compactly supported, belong to $C^k([0,T])$ with $k\ge 2$ and $a_i\ge 0$ for all $i=1,\dots,n$. Let $g_0$ and $g_1$ belong to $\gamma^s_c(\R^n)$ with $s>1$. Then 
\begin{itemize}
\item[(i)] there exists an embedding of the coefficients $a_i$'s and $b_i$'s, $i=1,\dots,n$, into $\G([0,T])$, 
such that 
the Cauchy problem above has a unique solution $u\in \G([0,T];\G^s(\R^n))$ provided that
\[
1< s<1+\frac{k}{2};
\] 
\item[(ii)] any representative $(u_\eps)_\eps$ of $u$ converges in $C([0,T];\gamma^s(\R^n))$ as $\eps\to0$ to the unique classical solution in $C^2([0,T], \gamma^s(\R^n))$ of the Cauchy problem \eqref{CP_cons};
\item[(iii)] if the initial data $g_0$ and $g_1$ belong to $\E'(\R^n)$ then any representative $(u_\eps)_\eps$ of $u$ converges in $C([0,T];\D'_{(s)}(\R^n))$ to the ultradistributional solution in $C^2([0,T],\D'_{(s)}(\R^n))$ of the Cauchy problem \eqref{CP_cons}.
\end{itemize}
\end{theorem}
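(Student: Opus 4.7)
Part (i) follows directly from Theorem \ref{theo_CP_1}: compactly supported $C^k([0,T])$ coefficients are in particular compactly supported distributions, so the very weak solution in $\G([0,T];\G^s(\R^n))$ exists for $1<s<1+k/2$. The refinement available here is that, since $a_i, b_i$ are actually $C^k$, their Friedrichs mollifications $a_{i,\eps}, b_{i,\eps}$ converge to $a_i, b_i$ uniformly on $[0,T]$ (even in $C^k$), and the logarithmic scaling $\omega(\eps)$ in their regularisation can be dropped. For parts (ii) and (iii) the plan is to pick any representative $(u_\eps)_\eps$ of $u$, compare it to the classical (resp.\ ultradistributional) solution $\widetilde u$ of Theorem \ref{THM:GR12}, and show that $w_\eps:=u_\eps-\widetilde u$ tends to zero in the appropriate topology by applying the inhomogeneous energy estimate already developed.

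For (ii), subtracting the two equations shows that $w_\eps$ solves the regularised problem with negligible (or zero) initial data and forcing term
\[
f_\eps = \sum_{i=1}^n (b_{i,\eps}-b_i)\,D_t D_{x_i}\widetilde u + \sum_{i=1}^n (a_{i,\eps}-a_i)\,D_{x_i}^2\widetilde u.
\]
Since $\widetilde u\in C^2([0,T];\gamma^s(\R^n))$, its derivatives have Fourier decay $\esp^{-c\lara{\xi}^{1/s}}$, and since the coefficient differences tend uniformly to zero, one obtains $|\widehat{f_\eps}(t,\xi)|\le \varepsilon_\eps\lara{\xi}^2\esp^{-c\lara{\xi}^{1/s}}$ with $\varepsilon_\eps\to 0$. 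Feeding this into the inhomogeneous energy estimate (the analogue of (4.9) in \cite{GR:11} recalled in the uniqueness part of Theorem \ref{theo_CP_1}) gives, for $V_\eps(t,\xi)=\mathcal F_x(\lara{D_x}w_\eps,D_t w_\eps)(t,\xi)$,
\[
|V_\eps(t,\xi)| \le C\lara{\xi}^N\esp^{\kappa \lara{\xi}^{1/s}}\int_0^t |\widehat{f_\eps}(\tau,\xi)|\,d\tau,
\]
where $\kappa>0$ can be chosen arbitrarily small thanks to the strict inequality $s<1+k/2$. Taking $\kappa<c/2$ yields $|V_\eps(t,\xi)|\le C'\varepsilon_\eps\esp^{-c'\lara{\xi}^{1/s}}$ uniformly in $t$, and the Fourier characterisation in Proposition \ref{prop_reg_gevrey_mod}(iii) delivers $w_\eps\to 0$ in $C([0,T];\gamma^s(\R^n))$.

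For (iii), the same estimate is tested against $\varphi\in\gamma^{(s)}_c(\R^n)$: the rapid Gevrey decay $|\widehat\varphi(\xi)|\lesssim \esp^{-c_\varphi\lara{\xi}^{1/s}}$ absorbs both the polynomial $\lara{\xi}^N$ and the exponential loss $\esp^{\kappa\lara{\xi}^{1/s}}$ (again taking $\kappa<c_\varphi$), so that $\langle w_\eps(t),\varphi\rangle\to 0$ uniformly in $t\in[0,T]$. The ultradistributional initial data $g_0,g_1\in\E'(\R^n)$ are regularised as $g_{j,\eps}=g_j\ast\rho_\eps$ as in Section 6; this produces an extra contribution $C\eps^{-N}\esp^{-c\lara{\xi}^{1/s}}|V_\eps(0,\xi)|$ to the energy estimate, which after pairing with $\widehat\varphi$ is controlled the same way, together with the fact that $g_{j,\eps}\to g_j$ in $\D'_{(s)}(\R^n)$ by standard properties of the mollification.

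The main obstacle is that the inhomogeneous energy estimate carries a loss $\esp^{\kappa\lara{\xi}^{1/s}}$ on the Fourier side; the whole argument hinges on making $\kappa$ smaller than the Gevrey rate either of the classical solution (in (ii)) or of the test function (in (iii)). It is precisely the strict inequality $s<1+k/2$ that provides this flexibility, and once it is in place the uniform convergence of mollifications of $C^k$ functions propagates through the energy estimate to give the stated convergences.
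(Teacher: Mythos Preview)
Your proposal is correct and follows essentially the same route as the paper: part (i) is obtained from Theorem~\ref{theo_CP_1}, and for (ii)--(iii) you form the difference $w_\eps=u_\eps-\wt u$, identify the forcing term as $(b_{i,\eps}-b_i)D_tD_{x_i}\wt u+(a_{i,\eps}-a_i)D_{x_i}^2\wt u$ (the paper's $n_\eps$ up to sign), and feed this into the inhomogeneous energy estimate \eqref{est_uniqueness}, noting that for $C^k$ coefficients the $\omega(\eps)^{-N}$ loss is absent and $\kappa$ can be taken small because $s<1+k/2$. The paper's estimate \eqref{impor_est_const} actually carries the sharper exponent $\kappa\,\eps^{1/s}\lara{\xi}^{1/s}$ rather than your $\kappa\lara{\xi}^{1/s}$, and there is no $\eps^{-N}$ prefactor in the initial-data contribution for (iii); but your weaker form still suffices for the conclusion.
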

\begin{proof}
(i) From Section \ref{sec_case1} (Case 1) we know that by embedding coefficients and initial data in the corresponding Colombeau algebras the Cauchy problem has a unique solution $u\in \G([0,T]; \G^s(\R^n))$. It also has a unique classical solution $\wt{u}\in C^2([0,T], \gamma^s(\R^n))$. 

(ii) We now want to compare $u$ with $\wt{u}$. By definition of classical solution we know that
\beq
\label{CP_class}
 \begin{split}
D_t^2\wt{u}(t,x)-\sum_{i=1}^n b_i(t)D_tD_{x_i}\wt{u}(t,x)-\sum_{i=1}^n a_i(t)D_{x_i}^2\wt{u}(t,x)&=0,\\
\wt{u}(0,x)&=g_0,\\
D_t \wt{u}(0,x)&=g_1.\\
\end{split}
\eeq
Since the initial data do not need to be regularised because they are already Gevrey there exists a representative $(u_\eps)_\eps$ of $u$ such that
\beq
\label{CP_class_2}
\begin{split}
D_t^2u_\eps(t,x)-\sum_{i=1}^n b_{i,\eps}(t)D_tD_{x_i}u_\eps(t,x)-\sum_{i=1}^n a_{i,\eps}(t)D_{x_i}^2u_\eps(t,x)&=0,\\
u_\eps(0,x)&=g_{0},\\
D_t u_\eps(0,x)&=g_{1},\\
\end{split}
\eeq
for suitable embeddings of the coefficients $a_i$ and $b_i$. Noting that the nets $(a_{i,\eps}-a_i)_\eps$ and $(b_{i,\eps}-b_i)_\eps$ are converging to $0$ in $C([0,T]\times\R^n)$ for $i=1,\dots,n$ we can rewrite \eqref{CP_class} as
\beq
\label{CP_class_3}
\begin{split}
D_t^2\wt{u}(t,x)-\sum_{i=1}^n b_{i,\eps}(t)D_tD_{x_i}\wt{u}(t,x)-\sum_{i=1}^n a_{i,\eps}(t)D_{x_i}^2\wt{u}(t,x)&=n_\eps(t,x),\\
\wt{u}(0,x)&=g_0,\\
D_t \wt{u}(0,x)&=g_1,\\
\end{split}
\eeq
where $n_\eps\in C([0,T];\gamma^s(\R^n))$ and converges to $0$ in this space. From \eqref{CP_class_3} and \eqref{CP_class_2} we get that $\wt{u}-u_\eps$ solves the Cauchy problem
\[
\begin{split}
D_t^2(\wt{u}-u_\eps)(t,x)-\sum_{i=1}^n b_{i,\eps}(t)D_tD_{x_i}(\wt{u}-u_\eps)(t,x)-\sum_{i=1}^n a_{i,\eps}(t)D_{x_i}^2(\wt{u}-u_\eps)(t,x)&=n_\eps(t,x),\\
(\wt{u}-u_\eps)(0,x)&=0,\\
(D_t \wt{u}-D_t u_\eps)(0,x)&=0,\\
\end{split}
\]
By the energy estimates of Case 1 and arguing as in the uniqueness proof of Theorem \ref{theo_CP_1} to deal with the right-hand side
we arrive after reduction to a system and by application of the Fourier transform to estimate $|(\wt{V}-V_\eps)(t,\xi)|$ as in \eqref{est_uniqueness}, in terms of  $(\wt{V}-V_\eps)(0,\xi)$ and the right-hand side $n_\eps(t,x)$. In particular, since the coefficients are regular enough (of class $C^k$, $k\ge 2$), the term $\omega(\eps)^{-N}$ disappears in \eqref{est_uniqueness} and we simply get
\beq
\label{impor_est_const}
|(\wt{V}-V_\eps)(t,\xi)|\le c_1\lara{\xi}^N\esp^{\kappa_1 \eps^{\frac{1}{s}}\lara{\xi}^{\frac{1}{s}}}|(\wt{V}-V_\eps)(0,\xi)|+c_2\lara{\xi}^N\esp^{\kappa_2\eps^{\frac{1}{s}}\lara{\xi}^{\frac{1}{s}}}|\widehat{n_\eps}(t,\xi)|
\eeq
Since  $(\wt{V}-V_\eps)(0,\xi)=0$ and $n_\eps\to 0$ in $C([0,T];\gamma^s(\R^n))$ we conclude that $u_\eps\to \wt{u}$ in $C([0,T];\gamma^s(\R^n))$. Moreover, since
 any other representative of $u$ will differ from $(u_\eps)_\eps$ by a $C^\infty([0,T];\gamma^s(\R^n))$-negligible net, 
 the limit is the same for any representative of $u$.

(iii) Let us now consider the case of initial data in $\E'(\R^n)$. We know from \cite{GR:12} that the Cauchy problem 
\beq
\label{CP_const_ultra}
\begin{split}
D_t^2u(t,x)-\sum_{i=1}^n b_i(t)D_tD_{x_i}u(t,x)-\sum_{i=1}^n a_i(t)D_{x_i}^2u(t,x)&=0,\\
u(0,x)&=g_0,\\
D_t u(0,x)&=g_1,\\
\end{split}
\eeq
has a unique solution $\wt{u}\in C^2([0,T],\D'_{(s)}(\R^n))$ in the sense of ultradistributions. Hence,
\[
\begin{split}
D_t^2\wt{u}(t,x)-\sum_{i=1}^n b_i(t)D_tD_{x_i}\wt{u}(t,x)-\sum_{i=1}^n a_i(t)D_{x_i}^2\wt{u}(t,x)&=0,\\
\wt{u}(0,x)&=g_0,\\
D_t \wt{u}(0,x)&=g_1.\\
\end{split}
\]
We also know that the Cauchy problem \eqref{CP_const_ultra} has a unique solution $u$ in $\G([0,T];\G^s(\R^n))$ after suitable embedding of coefficients and initial data. This means that there exists a representative $(u_\eps)_\eps$ of classical smooth solutions  such that  
\beq
\label{proof_cons}
\begin{split}
D_t^2u_\eps(t,x)-\sum_{i=1}^n b_{i,\eps}(t)D_tD_{x_i}u_\eps(t,x)-\sum_{i=1}^n a_{i,\eps}(t)D_{x_i}^2u_\eps(t,x)&=0,\\
u_\eps(0,x)&=g_{0,\eps},\\
D_t u_\eps(0,x)&=g_{1,\eps},\\
\end{split}
\eeq
for suitable embeddings of coefficients and initial data as discussed previously in Case 3. Note that the nets $(a_{i,\eps}-a_i)_\eps$ and $(b_{i,\eps}-b_i)_\eps$ are converging to $0$ in $C([0,T]\times\R^n)$ for $i=1,\dots,n$ and that $g_{0,\eps}-g_0$ and $g_{1,\eps}-g_1$ are nets of distributions converging to 0 as well. As in (ii) we can write
\beq
\label{last_CP}
 \begin{split}
D_t^2(u_\eps-\wt{u})(t,x)-\sum_{i=1}^n b_{i,\eps}(t)D_tD_{x_i}(u_\eps-\wt{u})(t,x)-\sum_{i=1}^n a_{i,\eps}(t)D_{x_i}^2(u_\eps-&\wt{u})(t,x)= n_\eps(t,x),\\
u_\eps(0,x)-\wt{u}(0,x)&=g_{0,\eps}-g_0,\\
D_t u_\eps(0,x)-D_t\wt{u}(0,x)&=g_{1,\eps}-g_1,\\
\end{split}
\eeq
where $(n_\eps)_\eps$ is converging to $0$ in $C([0,T];\D'_{(s)}(\R^n))$ and the nets $g_{0,\eps}-g_0$ and $g_{1,\eps}-g_1$ are converging to $0$ in the sense of distributions.  From the estimate \eqref{impor_est_const} we deduce that $\wt{V}-V_\eps\to 0$ in  $C([0,T];\D'_{(s)}(\R^n))$ or in other words that $u_\eps\to \wt{u}$ in $C([0,T];\D'_{(s)}(\R^n))$. Analogously, this result is not affected by changing the representative $(u_\eps)_\eps$ of $u\in \G([0,T];\G^s(\R^n))$.

\end{proof}


\end{document}